\newcommand{\cg}{\nabla^{g}}
\newcommand{\ch}{\nabla^{h}}
\newcommand{\cn}{\nabla}
\newcommand{\ric}{\operatorname{ric}}
\newcommand{\frg}{\mathfrak{g}}
\newcommand{\Id}{\mathrm{Id}}
\newcommand{\Der}{\mathrm{Der}}
\newcommand{\End}{\mathrm{End}}
\newcommand{\R}{\mathbb{R}}
\renewcommand{\d}{\mathrm{d}}
\DeclarePairedDelimiter{\escal}{\langle}{\rangle}
\g@addto@macro\bfseries{\boldmath}
\newtheorem{theorem}{Theorem}[section]
\newtheorem{corollary}[theorem]{Corollary}
\newtheorem{lemma}[theorem]{Lemma}
\newtheorem{proposition}[theorem]{Proposition}
\theoremstyle{definition}
\newtheorem{definition}[theorem]{Definition}
\newtheorem{example}[theorem]{Example}
\newtheorem{remark}[theorem]{Remark}
\let\OLDthebibliography\thebibliography
\renewcommand\thebibliography[1]{
	\OLDthebibliography{#1}
	\setlength{\parskip}{0pt}
}
\title{\textbf{Born Lie algebras}}
\author{Alejandro Gil-García \orcidlink{0000-0002-9370-241X} and Paula Naomi Pilatus \orcidlink{0009-0007-5198-6964}}
\affil{\normalsize Beijing Institute of Mathematical Sciences and Applications (BIMSA)\\
        No.\ 544, Hefangkou Village, Huaibei Town, Huairou District, Beijing 101408, China\\
        alejandrogilgarcia@bimsa.cn}
\affil{\normalsize Fachbereich Mathematik\\
	Universit\"at Hamburg\\
	Bundesstra\ss e 55, 20146 Hamburg, Germany\\
        paula.pilatus@uni-hamburg.de}
\date{\today}
\begin{document}


\maketitle

\begin{abstract}

We show that every Born Lie algebra can be obtained by the bicross product construction starting from two pseudo-Riemannian Lie algebras. We then obtain a classification of all Lie algebras up to dimension four and all six-dimensional nilpotent Lie algebras admitting an integrable Born structure. Finally, we study the curvature properties of the pseudo-Riemannian metrics of the integrable Born structures obtained in our classification results.\bigskip

\emph{Keywords: Born, pseudo-Kähler, bi-Lagrangian, complex product, bicross product}\medskip

\emph{MSC classification: Primary 53C15; Secondary 53C50, 53C55, 22E25}

\end{abstract}

\clearpage

\tableofcontents


\section{Introduction}

Born structures were introduced first in~\cite{FREIDEL2014302} in the context of T-duality in string theory and have been studied further e.g.~in~\cite{freidel2014quantum,freidel2015metastring,freidel2017generalised,freidel2019unique,marotta2018parahermitian,marotta2021born}, along with their application in high-energy physics. There are several equivalent definitions of a Born structure that can be found in the literature. The definition of a Born structure that is stated e.g.~in~\cite{freidel2019unique} is in terms of para-Hermitian and para-quaternionic structures. A definition of Born structures given purely in the language of generalized geometry can be found in~\cite{HMS19}. In this paper, we will work with the definition of a Born structure proposed in~\cite{HKP24}, as a diagram
    \begin{center}
        \begin{tikzcd}[column sep=small]& g\arrow[dl,"A"']\arrow{dr}{B} & \\\omega \arrow{rr}{-J}   & &h \ \ ,
     \end{tikzcd}
    \end{center}
where $g$ and $h$ are pseudo-Riemannian metrics and $\omega$ is a non-degenerate two-form, such that the endomorphisms $A,B,J$ satisfy $A^2=B^2=-J^2=\operatorname{Id}$. It is proved in~\cite{HKP24} that this definition gives indeed rise to the same structure as originally introduced in~\cite{FREIDEL2014302}.\medskip 

We will furthermore adopt the notion of \emph{integrability} of a Born structure as defined in~\cite{HKP24}, i.e.~we will say that the Born structure is integrable if the two-form $\omega$ is closed and the endomorphisms $A$, $B$ and $J$ are integrable. While the question of integrability is usually not addressed in the literature about Born structures from high-energy physics, it is geometrically interesting. An integrable Born structure gives rise to a (pseudo-)Kähler structure, a bi-Lagrangian structure as well as a complex product structure on the underlying manifold satisfying certain compatibility conditions.\medskip

Being a relatively new geometric structure, so far not many examples of manifolds admitting (integrable) Born structures have been discussed in the literature. A particularly feasible class of examples are left-invariant Born structures on Lie groups, since in this setting it suffices to work at the level of the Lie algebra. First examples of Born structures on Lie algebras were studied in~\cite{marotta2018parahermitian} and~\cite{marotta2021born}. Furthermore, in~\cite{HKP24} examples of \emph{integrable} Born structures on nilpotent Lie algebras were provided. In this paper we will discuss constructions in order to obtain Born Lie algebras in a more systematic manner, leading to a classification of all Lie algebras up to dimension four and all six-dimensional nilpotent Lie algebras admitting integrable Born structures. All of these nilpotent Lie algebras have a rational basis, hence the corresponding Lie groups admit cocompact lattices~\cite{Mal49}. Since the left-invariant Born structures descend to the nilmanifolds, this gives rise to many examples of compact Born manifolds.

\subsubsection*{Main results and outline}

In this paper we focus on the construction and classification of low-dimensional Lie algebras admitting an integrable Born structure. In Section~\ref{sec:preliminaries} we recall the notion and basic properties of Born Lie algebras. In Section~\ref{sec:bicross_products} we make use of the bicross product construction (see Definition~\ref{def: bicross product}) to show that given two pseudo-Riemannian Lie algebras $(\frg_\pm,h_\pm)$, a linear isometry $Q\colon\frg_+\longrightarrow\frg_-$ and two representations $\varphi$ and $\rho$ satisfying suitable conditions we obtain a Born Lie algebra, see Theorem~\ref{thm:bicross_Born}. Furthermore, every Born Lie algebra can be obtained in this way, see Proposition~\ref{prop:every_Born_is_bicross}. Setting one of the representations to zero, the bicross product reduces to the semidirect product and we are able to construct many examples of Born Lie algebras starting just from flat pseudo-Riemannian Lie algebras, see Corollary~\ref{cor:Born_from_flat}. By a similar strategy we obtain examples of pseudo-hyperkähler and hypersymplectic Lie algebras, see Propositions~\ref{prop: hyperkähler from flat Kähler} and~\ref{prop: HS from flat biLag}, respectively.\medskip

In Section~\ref{sec:classification_low_dim} we classify low-dimensional Lie algebras admitting an integrable Born structure. The classification in dimension two is trivial and the classification in dimension four is obtained in Theorem~\ref{thm:classification_Born_dim4} by comparing the classifications of four-dimensional pseudo-Kähler and bi-Lagrangian Lie algebras and providing an explicit integrable Born structure for each Lie algebra in the theorem. In Theorem~\ref{thm: 6d class nilpotent} we classify all six-dimensional nilpotent Lie algebras admitting an integrable Born structure. To obtain such classification we make use of several results that restrict the possible candidates. First we show in Lemma~\ref{lemma:heis3-to-R3} that it is enough to consider the bicross products $\R^3\bowtie\mathfrak{heis}_3$, where $\mathfrak{heis}_3$ is the three-dimensional Heisenberg Lie algebra, and $\R^3\bowtie\R^3$. In the $\R^3\bowtie\mathfrak{heis}_3$ case we only need to consider flat metrics by Lemma~\ref{lemma:heis_only_flat} and we obtain all such Lie algebras in Proposition~\ref{prop:Born_R3+heis3}. In the $\R^3\bowtie\R^3$ case we first find necessary conditions to admit an integrable Born structure in Lemma~\ref{lemma:center>=3} and then we find such Lie algebras in Proposition~\ref{prop:Born_R3+R3}. The combination of these last two propositions gives us the complete classification of six-dimensional nilpotent Lie algebras admitting an integrable Born structure. Finally, we study curvature properties of the Born Lie algebras obtained in the classifications.

\subsubsection*{Related results}

As explained, Born geometry lies in the intersection of pseudo-Kähler, bi-Lagrangian and complex product geometry. These three geometries are well studied in the context of left-invariant structures on Lie groups or, equivalently, on Lie algebras. In particular, several classification results are known for each of these structures on low-dimensional Lie algebras.\medskip

Pseudo-Kähler Lie algebras have been completely classified in dimension four \cite{Pseudo-Kahler_4dim_06}. In this dimension it is known that if a Lie algebra admits a symplectic structure then it must be solvable \cite{Chu74}. Moreover, some of these pseudo-Kähler Lie algebras are actually Kähler, i.e.~positive-definite. This can only happen for non-nilpotent or abelian Lie algebras due to \cite{Hasegawa89} (see also \cite{DM96}). Six-dimensional nilpotent pseudo-Kähler Lie algebras have been classified in \cite{PK_6dim_nil_04}, and in \cite{LU25} a partial classification is obtained in dimension eight. Further examples of pseudo-Kähler Lie algebras have been constructed using semidirect products \cite{CCO15,ContiGil_23,Conti_Rossi_SegnanDalmasso_2023}. Bi-Lagrangian structures, also known as para-Kähler \cite{CFG96} or Künneth structures \cite{KünnethGeometry23}, have been classified in dimension four \cite{calvaruso_para-Kahler_15} and in dimension six in the nilpotent case \cite{Bi-Lagrangian_Ham_19}.\medskip

Complex product structures on Lie algebras were introduced in \cite{AS05}, where many four-dimensional examples were obtained. In contrast to the pseudo-Kähler and bi-Lagrangian case, there exist four-dimensional non-solvable complex product Lie algebras. The classification in the solvable case was obtained in \cite{Product_4dim_05} and the complete classification in \cite{para-HC_dim4_10}, where complex product structures appear under the name para-hypercomplex. Six-dimensional nilpotent complex product Lie algebras were classified in \cite{complex_product_dim6_08}.

\subsubsection*{Acknowledgements}

We are grateful to D.\ Conti and D.\ Kotschick for their valuable comments. We also thank the referee for the comments on the paper. The work of AGG is supported by the Beijing Institute of Mathematical Sciences and Applications (BIMSA) and partially funded by the Deutsche Forschungsgemeinschaft (DFG, German Research Foundation) under Germany’s Excellence Strategy -- EXC 2121 “Quantum Universe” -- 390833306 and the Deutsche Forschungsgemeinschaft -- SFB-Geschäftszeichen 1624 -- Projektnummer 506632645. The work of PNP is funded by the Deutsche Forschungsgemeinschaft under Germany’s Excellence Strategy -- EXC 2121 “Quantum Universe” -- 390833306.


\section{Preliminaries on Born structures}\label{sec:preliminaries}

Let $a$ and $b$ be non-degenerate bilinear forms on the tangent bundle of a smooth manifold $M$. Then the \emph{recursion operator} from $a$ to $b$ is the unique field of invertible endomorphisms $A$, such that $a(A\cdot,\cdot)=b(\cdot,\cdot)$. We will denote this as $a\overset{A}{\longrightarrow} b$. For a detailed discussion of recursion operators see \cite{BK08}.

\begin{definition}[\cite{HKP24}]
\label{def: Born structure}
Let $M$ be a smooth manifold. A \emph{Born structure} on $M$ is a triple $(g,h,\omega)$, where $g$ and $h$ are pseudo-Riemannian metrics and $\omega$ is a non-degenerate two-form such that the recursion operators
    \begin{center}
        \begin{tikzcd}[column sep=small]& g\arrow[dl,"A"']\arrow{dr}{B} & \\\omega \arrow{rr}{-J}   & &h
     \end{tikzcd}
    \end{center}
satisfy $-J^2=A^2=B^2=\operatorname{Id}$.
\end{definition}

\begin{proposition}[\cite{HKP24}]
\label{prop: algebraic properties}
    The recursion operators $A$, $B$ and $J$ pairwise anti-commute and $ABJ=\operatorname{Id}$. In particular $J$ interchanges the $\pm1$-eigenbundles of $A$ and $B$. Moreover, the bilinear forms $g$, $h$ and $\omega$ are, up to sign, invariant under $A$, $B$ and $J$. The precise relations are shown in Table~\ref{tab:comp_bil_forms}.
\end{proposition}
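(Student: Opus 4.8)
The plan is to work pointwise and reduce everything to linear algebra, using only the defining equations of the three recursion operators together with the symmetry of $g,h$ and the skew-symmetry of $\omega$. Unravelling the diagram, the operators are characterised by
\[
g(AX,Y)=\omega(X,Y),\qquad g(BX,Y)=h(X,Y),\qquad \omega(JX,Y)=-h(X,Y)
\]
for all $X,Y$. First I would record the $g$-adjoint relations, writing $P^{\ast}$ for the $g$-adjoint of $P$ (so that $g(PX,Y)=g(X,P^{\ast}Y)$): skew-symmetry of $\omega$ gives $g(AX,Y)=-g(X,AY)$, i.e.\ $A^{\ast}=-A$, and symmetry of $h$ gives $g(BX,Y)=g(X,BY)$, i.e.\ $B^{\ast}=B$.

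The structural heart of the argument is that recursion operators compose and are unique: if $a\overset{P}{\longrightarrow}b$ and $b\overset{Q}{\longrightarrow}c$, then $a\overset{PQ}{\longrightarrow}c$. Reading the diagram as $g\overset{A}{\longrightarrow}\omega\overset{-J}{\longrightarrow}h$ and comparing with $g\overset{B}{\longrightarrow}h$ yields $B=-AJ$, equivalently $AB=-J$. From here everything is formal. Using $J^{2}=-\operatorname{Id}$ I get $ABJ=(AB)J=-J^{2}=\operatorname{Id}$. Squaring $AB=-J$ gives $ABAB=(AB)^{2}=J^{2}=-\operatorname{Id}$; multiplying on the left by $A$ and on the right by $B$ and using $A^{2}=B^{2}=\operatorname{Id}$ produces $BA=-AB$, so $A$ and $B$ anti-commute and $J=BA$. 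Substituting then gives $AJ=-B=-JA$ and $BJ=A=-JB$, establishing all three pairwise anti-commutations. The eigenbundle statement is an immediate consequence: if $AX=\pm X$, then $A(JX)=-J(AX)=\mp JX$, so $J$ exchanges the $\pm1$-eigenbundles of $A$, and likewise for $B$.

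It remains to fill in Table~\ref{tab:comp_bil_forms}, i.e.\ to compute $a(PX,PY)$ for each form $a\in\{g,h,\omega\}$ and each operator $P\in\{A,B,J\}$. For $g$ this is immediate from the adjoint relations together with $J^{\ast}=(-AB)^{\ast}=-B^{\ast}A^{\ast}=BA=J$, giving $g(AX,AY)=-g(X,Y)$, $g(BX,BY)=g(X,Y)$ and $g(JX,JY)=g(X,J^{2}Y)=-g(X,Y)$. For $h$ and $\omega$ I would rewrite $h=g(B\,\cdot,\cdot)$ and $\omega=g(A\,\cdot,\cdot)$, move one operator across a slot using $A^{\ast}=-A$ or $B^{\ast}=B$, and simplify the resulting triple products via identities such as $ABA=-B$, $BAB=-A$ and $JAJ=A$, all of which follow from the anti-commutation relations and the squares. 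The outcome is that $h$ is preserved by all three operators, while $\omega$ is preserved by $J$ and reversed by $A$ and $B$, matching Table~\ref{tab:comp_bil_forms}.

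Since the content is entirely formal, the only genuine difficulty is disciplined sign bookkeeping: applying the composition/uniqueness step with the correct sign coming from the $-J$ label on the bottom arrow, and keeping track of which adjoint relation and which triple-product identity enters each of the nine invariance computations.
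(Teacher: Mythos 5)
Your proposal is correct. Note that the paper itself offers no proof of this proposition: it is quoted from~\cite{HKP24}, so there is no in-paper argument to compare against, and your write-up serves as a self-contained verification of the cited result. The key structural move --- that recursion operators compose (and are unique, by non-degeneracy of the forms), so that reading the two paths from $g$ to $h$ in the diagram forces $B=-AJ$ --- is exactly the right lever; once you have that single identity, the anti-commutation relations, $ABJ=\operatorname{Id}$, the eigenbundle exchange, and all nine entries of Table~\ref{tab:comp_bil_forms} follow by the formal manipulations you describe. I spot-checked the sign-sensitive steps: the $g$-adjoint relations $A^{*}=-A$, $B^{*}=B$, $J^{*}=J$; the derivation $ABAB=-\operatorname{Id}\Rightarrow BA=-AB$; the consequences $AJ=-B=-JA$, $BJ=A=-JB$; and representative table entries such as $h(AX,AY)=g(-ABAX,Y)=g(BX,Y)=h(X,Y)$ and $\omega(JX,JY)=\omega(X,Y)$ via $AJ=-B$ and $BJ=A$. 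All are consistent with the stated conventions $g(A\cdot,\cdot)=\omega$, $g(B\cdot,\cdot)=h$, $\omega(-J\cdot,\cdot)=h$ and with the table in the paper.
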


\begin{table}[H]
    \centering
\begin{tabular}{ c|c|c } 
 $g(AX,AY)=-g(X,Y)$ & $h(AX,AY)=h(X,Y)$ & $\omega(AX,AY)=-\omega(X,Y)$\\
  $g(AX,Y)=-g(X,AY)$ & $h(AX,Y)=h(X,AY)$ & $\omega(AX,Y)=-\omega(X,AY)$\\
  &  &  \\
  $g(BX,BY)=g(X,Y)$ & $h(BX,BY)=h(X,Y)$ & $\omega(BX,BY)=-\omega(X,Y)$\\
  $g(BX,Y)=g(X,BY)$ & $h(BX,Y)=h(X,BY)$ & $\omega(BX,Y)=-\omega(X,BY)$\\
   & & \\
$g(JX,JY)=-g(X,Y)$ & $h(JX,JY)=h(X,Y)$ & $\omega(JX,JY)=\omega(X,Y)$ \\ 
 $g(JX,Y)=g(X,JY)$ & $h(JX,Y)=-h(X,JY)$ & $\omega(JX,Y)=-\omega(X,JY)$ \\ 
\end{tabular}
\caption{Transformation properties of $g$, $h$ and $\omega$ under $A,B$ and $J$ (see \cite{HKP24}).}
\label{tab:comp_bil_forms}
\end{table}

We denote by $L_{\pm}$ the $\pm1$-eigenbundles of $A$. From Proposition~\ref{prop: algebraic properties} we obtain:

\begin{corollary}[\cite{HKP24}]
\label{cor: L pm Lagrangian}
    The distributions $L_{\pm}$ are Lagrangian for $\omega$, null for $g$ and orthogonal for $h$. In particular, $g$ has neutral signature. The signature of $h$ is of the form $(2p,2q)$.
\end{corollary}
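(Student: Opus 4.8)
The plan is to read off every assertion directly from the transformation rules in Table~\ref{tab:comp_bil_forms}, together with the fact recorded in Proposition~\ref{prop: algebraic properties} that $J$ interchanges $L_+$ and $L_-$. Throughout I write $X,Y$ for sections of $L_\pm$, so that $AX=\pm X$ according to the chosen sign, and I use the splitting $TM = L_+\oplus L_-$ coming from $A^2=\operatorname{Id}$.

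First I would establish that $L_\pm$ are null for $g$ and isotropic for $\omega$. For $X,Y\in L_+$ the relation $g(AX,Y)=-g(X,AY)$ gives $g(X,Y)=-g(X,Y)$, hence $g(X,Y)=0$; the same computation with the extra signs settles $L_-$, and the identical argument using $\omega(AX,Y)=-\omega(X,AY)$ shows $\omega$ vanishes on each $L_\pm$. To upgrade isotropic to Lagrangian for $\omega$ I would invoke that $J$ is invertible (since $J^2=-\operatorname{Id}$) and maps $L_+$ isomorphically onto $L_-$; together with $TM=L_+\oplus L_-$ this forces $\dim L_+=\dim L_-=\tfrac12\dim M$, and a half-dimensional isotropic subspace of a symplectic form is automatically Lagrangian.

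Next, orthogonality of $L_+$ and $L_-$ for $h$ comes from the symmetric relation $h(AX,Y)=h(X,AY)$: for $X\in L_+$ and $Y\in L_-$ this reads $h(X,Y)=-h(X,Y)$, so $h(X,Y)=0$. The signature statements then follow. For $g$, the tangent bundle is a direct sum of two null subspaces of equal dimension $\tfrac12\dim M$; since a null subspace has dimension at most the smaller index of the signature, both indices must equal $\tfrac12\dim M$, i.e.\ $g$ is neutral. For $h$, the $h$-orthogonal splitting $TM=L_+\oplus L_-$ makes the signature of $h$ the sum of those of $h|_{L_+}$ and $h|_{L_-}$; the relation $h(JX,JY)=h(X,Y)$ shows $J|_{L_+}\colon L_+\to L_-$ is an $h$-isometry, so the two restrictions share a common signature $(p,q)$ and $h$ has signature $(2p,2q)$.

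The computations themselves are immediate one-liners; the only genuine content is bookkeeping with the eigenbundle decomposition. The step I expect to require the most care is the repeated use of $J$: it is what turns ``isotropic'' into ``Lagrangian'' via equality of dimensions, and it is also what forces the two halves of $h$ to match, yielding the even signature $(2p,2q)$. Verifying that $J$ genuinely restricts to an isomorphism $L_+\to L_-$ — precisely the claim in Proposition~\ref{prop: algebraic properties} that $J$ interchanges the eigenbundles of $A$ — is therefore the linchpin of both signature assertions.
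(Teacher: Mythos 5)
Your proof is correct and follows exactly the route the paper takes: the paper states this corollary as an immediate consequence of Proposition~\ref{prop: algebraic properties} (equivalently, the transformation rules of Table~\ref{tab:comp_bil_forms}), and your argument simply makes that derivation explicit. All the individual steps are sound, including the two places where care is genuinely needed --- using the $J$-interchange of the eigenbundles to get $\dim L_\pm = \tfrac12\dim M$ (upgrading isotropic to Lagrangian and null to neutral) and to transport the signature of $h|_{L_+}$ to $h|_{L_-}$, yielding $(2p,2q)$.
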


\begin{definition}[\cite{HKP24}]
    \label{def: int born}
    A Born structure $(g,h,\omega)$ is \emph{integrable} if the two-form $\omega$ is symplectic and two out of the three recursion operators are integrable. A manifold with an integrable Born structure is called \emph{Born}.
\end{definition}

Here, it is enough to require the integrability of only two out of the three recursion operators, due to the following proposition:

\begin{proposition}[\cite{HKP24,IVANOV2005205}]
\label{prop: rec op}
    If two out of the three recursion operators in a Born structure are integrable, so is the third one.
\end{proposition}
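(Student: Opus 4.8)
The plan is to work entirely at the algebraic/tensorial level, since the closedness of $\omega$ plays no role here; what must be shown is that if two of the Nijenhuis tensors $N_A, N_B, N_J$ vanish then so does the third. For a recursion operator $F$ we write $N_F(X,Y) = [FX,FY] - F[FX,Y] - F[X,FY] + F^2[X,Y]$, and we use that integrability of the almost complex structure $J$ is equivalent to $N_J=0$, while integrability of the almost product structures $A, B$ is equivalent to $N_A=0$, $N_B=0$ (i.e.\ to involutivity of their eigendistributions). The starting point is Proposition~\ref{prop: algebraic properties}: the three operators pairwise anti-commute and $ABJ=\operatorname{Id}$, so each of them is, up to sign, the composition of the other two, e.g.\ $J=BA=-AB$, $B=JA=-AJ$ and $A=BJ=-JB$. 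Hence in every case we must control the Nijenhuis tensor of a composition of two anti-commuting recursion operators in terms of the Nijenhuis tensors of its factors.

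I would carry this out via the eigenbundle description, treating first the case where $A$ and $J$ are integrable and $B=JA$ is to be shown integrable. Let $L_\pm$ be the $\pm1$-eigenbundles of $A$, so that integrability of $A$ means that $L_+$ and $L_-$ are both involutive. By Proposition~\ref{prop: algebraic properties}, $J$ anti-commutes with $A$ and therefore interchanges $L_+$ and $L_-$; writing $J_\pm=J|_{L_\pm}$ these satisfy $J_\mp J_\pm=-\operatorname{Id}$. A short computation with $B=JA$ shows that the $\pm1$-eigenbundles $W_\pm$ of $B$ are the graphs $W_\pm=\{x\pm Jx : x\in L_+\}$, so proving $B$ integrable amounts to showing that $W_+$ and $W_-$ are involutive.

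For sections $s=x+Jx$, $s'=x'+Jx'$ of $W_+$ (with $x,x'\in\Gamma(L_+)$) I expand
\[
[s,s']=[x,x']+[x,Jx']+[Jx,x']+[Jx,Jx'].
\]
Involutivity of $L_\pm$ places $[x,x']\in L_+$ and $[Jx,Jx']\in L_-$, while $N_J(x,x')=0$ yields the single relation $[Jx,Jx']=[x,x']+J\bigl([Jx,x']+[x,Jx']\bigr)$. Comparing the $L_+$- and $L_-$-components of this relation forces the $L_-$-component of the mixed bracket $[Jx,x']+[x,Jx']$ to equal $J[x,x']$ and expresses $[Jx,Jx']$ through the $L_+$-component of that mixed bracket; substituting back shows that the $L_-$-component of $[s,s']$ is exactly $J$ applied to its $L_+$-component, i.e.\ $[s,s']\in W_+$. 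The same computation with $s=x-Jx$ gives involutivity of $W_-$, hence $N_B=0$. The two remaining cases, deriving $N_J=0$ from $N_A=N_B=0$ and $N_A=0$ from $N_B=N_J=0$, are handled by the identical graph argument applied to the relevant pair of operators, keeping careful track of the sign difference between $A^2=B^2=\operatorname{Id}$ and $J^2=-\operatorname{Id}$.

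The main obstacle is precisely this bookkeeping: one must decompose every bracket into its $L_+$- and $L_-$-components and check that the \emph{single} relation coming from the vanishing Nijenhuis tensor, together with involutivity of the two eigenbundles, is exactly enough to force the graph condition — there is no slack, so the signs and the identities $J_\mp J_\pm=-\operatorname{Id}$ must be followed exactly. Equivalently, and perhaps cleaner for a write-up, one can replace the eigenbundle argument by establishing once and for all the universal tensorial identity expressing $N_{FG}$ for anti-commuting $F,G$ as a sum of terms each linear in $N_F$ or $N_G$ (evaluated on $X$, $Y$ and their images under $F$ and $G$); this is the computation underlying the cited result in~\cite{IVANOV2005205} and it disposes of all three cases simultaneously.
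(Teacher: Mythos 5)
The paper offers no proof to compare against: Proposition~\ref{prop: rec op} is imported from \cite{HKP24,IVANOV2005205} without argument, so your proposal stands or falls on its own merits. Its core is sound. The relations $J=BA=-AB$, $B=JA=-AJ$, $A=BJ=-JB$ do follow from Proposition~\ref{prop: algebraic properties}, and your main case ($N_A=N_J=0\Rightarrow N_B=0$) is correct and complete: the eigenbundles of $B=JA$ are indeed the graphs $W_\pm=\{x\pm Jx\,:\,x\in L_+\}$, and comparing $L_\pm$-components of the relation $[Jx,Jx']=[x,x']+J\left([Jx,x']+[x,Jx']\right)$ gives, writing $M=[Jx,x']+[x,Jx']=M_++M_-$, exactly $M_-=J[x,x']$ and $[Jx,Jx']=JM_+$, whence $[s,s']=u+Ju$ with $u=[x,x']+M_+\in L_+$, i.e.\ $[s,s']\in W_+$, as you claim; $W_-$ works the same way, and the case $N_B=N_J=0\Rightarrow N_A=0$ is literally the same argument with $A$ and $B$ exchanged.

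The one genuine imprecision is the remaining case, $N_A=N_B=0\Rightarrow N_J=0$: this is \emph{not} ``handled by the identical graph argument,'' because $J$ has no real eigenbundles, and integrability of an almost complex structure is not involutivity of real distributions --- which is the only mechanism your graph argument uses. The fix stays inside your framework but runs in the opposite direction: involutivity of $W_\pm$ is now a \emph{hypothesis}, giving $[x,x']+[Jx,Jx']\pm M\in W_\pm$ for $x,x'\in L_+$; taking $L_\pm$-components of these two conditions and adding/subtracting yields again $[Jx,Jx']=JM_+$ and $M_-=J[x,x']$, hence $N_J(x,x')=[Jx,Jx']-JM-[x,x']=-JM_--[x,x']=0$ on $L_+\times L_+$, and the pointwise identity $N_J(JX,Y)=-JN_J(X,Y)$ (valid for any almost complex structure) propagates the vanishing to all pairs of vectors. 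Alternatively one complexifies, viewing $T^{1,0}$ as the graph of $-\ii J$ over the complexification of $L_+$. This is a fixable overstatement rather than a fatal error; and your closing suggestion --- proving once and for all the identity expressing $N_{FG}$ of anti-commuting $F,G$ through $N_F$ and $N_G$ --- is indeed the route of \cite{IVANOV2005205} and disposes of all three cases uniformly, so for a write-up it is the cleaner choice.
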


Note that if a Born structure is integrable, then $(h,J)$ is a pseudo-Kähler structure. The structure $(\omega,L_+,L_-)$ is what we will call a \emph{bi-Lagrangian structure}:

\begin{definition}
\label{def: bi-lagrangian}
    An \emph{almost bi-Lagrangian structure} is an almost symplectic form $\omega$ together with a pair of complementary Lagrangian distributions~$L_{\pm}$. The almost bi-Lagrangian structure $(\omega,L_+,L_-)$ is called \emph{integrable} or \emph{bi-Lagrangian structure} if~$\omega$ is closed and the distributions~$L_{\pm}$ are integrable to Lagrangian foliations~$\mathcal{L}_{\pm}$.
\end{definition}

\begin{remark}
    Bi-Lagrangian structures appear in the literature also under the name \emph{Künneth structures}~\cite{KünnethGeometry23}. Furthermore, a bi-Lagrangian structure is equivalent to a \emph{para-Kähler structure}, i.e.~a pair $(g,A)$ of a neutral metric $g$ and an almost product structure $A$ such that $g(A\cdot,A\cdot)=-g(\cdot,\cdot)$ and such that $A$ is parallel with respect to the Levi-Civita connection of $g$.
\end{remark}

In particular, integrability of the Born structure implies that $\cg\omega=0$, $\ch\omega=0$ and $\ch J=0$, where $\cg$ and $\ch$ denote the Levi-Civita connections of $g$ and $h$, respectively. Furthermore, given an integrable Born structure the recursion operators $(J,A)$ form a \emph{complex product structure}, i.e.~a pair of a complex and a product structure that anti-commute.\medskip

In this project we will study constructions in order to obtain examples of left-invariant Born structures on Lie groups. For those, it suffices to work at the level of the Lie algebra. Born structures on Lie algebras were already introduced in~\cite{HKP24}. The definition of a Born structure on a Lie algebra is transcribed from Definition~\ref{def: Born structure} in the obvious way:

\begin{definition}[\cite{HKP24}]
\label{def: Born Lie algebras}
Let $\frg$ be a Lie algebra. A \emph{Born structure} on $\frg$ is a triple $(g,h,\omega)$, where $g$ and $h$ are pseudo-Riemannian metrics and $\omega$ is a non-degenerate two-form such that the recursion operators
    \begin{center}
        \begin{tikzcd}[column sep=small]& g\arrow[dl,"A"']\arrow{dr}{B} & \\\omega \arrow{rr}{-J}   & &h
     \end{tikzcd}
    \end{center}
satisfy $-J^2=A^2=B^2=\operatorname{Id}$. The Born structure is called \emph{integrable} if its two-form is closed under the Chevalley-Eilenberg differential, the endomorphism $J$ has vanishing Nijenhuis tensor and the eigenspaces of $A$ are subalgebras of $\frg$. A Lie algebra with an integrable Born structure is called \emph{Born}.
\end{definition}

Clearly, Proposition~\ref{prop: algebraic properties} carries over to the Lie algebra setting. We will denote the eigenspaces of $A$ by $\frg_{\pm}$. For these, an analogous statement to Corollary~\ref{cor: L pm Lagrangian} holds. Given a Lie group $G$, an integrable Born structure on its Lie algebra in the sense of Definition~\ref{def: Born Lie algebras} gives rise to a left-invariant integrable Born structure on $G$ in the sense of Definition~\ref{def: int born}.\medskip

We furthermore introduce the following notion of equivalence of Born structures on Lie algebras:
\begin{definition}
    Two Born structures $(g,h,\omega)$ and $(g',h',\omega')$ on Lie algebras $\frg$ and $\frg'$ are \emph{equivalent}, if there is an isomorphism of Lie algebras $\Phi\colon \frg\longrightarrow\frg'$ such that
    \begin{align*}
        \Phi^*h'=h,\quad \Phi^*g'=g,\quad \Phi^*\omega'=\omega.
    \end{align*}
\end{definition}

Sometimes, it will be more useful to work with the following equivalent integrability condition for a Born structure on a Lie algebra:

\begin{proposition}
    \label{prop: integrability with nabla h}
    A Born structure on a Lie algebra $\frg$ is integrable if and only if the eigenspaces of $A$ are subalgebras of $\frg$ and $J$ is parallel with respect to the Levi-Civita connection of $h$.
\end{proposition}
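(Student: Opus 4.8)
The plan is to reduce the statement to the fundamental theorem of (pseudo-)Kähler geometry. Both the definition of integrability (Definition~\ref{def: Born Lie algebras}) and the characterization to be proved contain the condition that the eigenspaces $\frg_\pm$ of $A$ be subalgebras; hence, working with a fixed Born structure on $\frg$, it suffices to show that the pair of conditions ``$\omega$ is closed and $J$ has vanishing Nijenhuis tensor'' is equivalent to ``$\ch J=0$''. First I would note that $(h,J)$ is genuinely almost pseudo-Hermitian: since $J^2=-\Id$ and, by Table~\ref{tab:comp_bil_forms}, $h(JX,JY)=h(X,Y)$, the endomorphism $J$ is an $h$-compatible almost complex structure. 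Next I would identify $\omega$ with the fundamental two-form of $(h,J)$. From the recursion operator $\omega\overset{-J}{\longrightarrow}h$ we have $h(X,Y)=-\omega(JX,Y)$, so that
\begin{equation*}
    h(JX,Y)=-\omega(J^2X,Y)=\omega(X,Y),
\end{equation*}
i.e.\ $\omega(X,Y)=h(JX,Y)$ is precisely the Kähler form of $(h,J)$ (that it is skew follows from the relation $h(JX,Y)=-h(X,JY)$ in Table~\ref{tab:comp_bil_forms}). In particular, closedness of $\omega$ under the Chevalley--Eilenberg differential coincides with closedness of this fundamental form.

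The core of the argument is then the standard fact that an almost pseudo-Hermitian structure is pseudo-Kähler exactly when its almost complex structure is parallel. Concretely, there is a universal algebraic identity expressing the covariant derivative $\ch J$ in terms of the Nijenhuis tensor $N_J$ and the exterior derivative $\d\omega$ of the fundamental form; on a Lie algebra all these objects are computed from the bracket via the Koszul formula
\begin{equation*}
    2\,h(\ch_XY,Z)=h([X,Y],Z)-h([Y,Z],X)+h([Z,X],Y),
\end{equation*}
since $h$ is constant on left-invariant fields. I would either invoke this identity from the literature (e.g.\ Kobayashi--Nomizu) or reproduce it directly in the present algebraic setting. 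Granting it, $\ch J=0$ forces both $N_J=0$ and $\d\omega=0$, and conversely $N_J=0$ together with $\d\omega=0$ forces $\ch J=0$.

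Putting these together proves the proposition. The direction ``$\ch J=0\Rightarrow$ integrable'' is the easy one: a parallel almost complex structure has vanishing Nijenhuis tensor, since for a torsion-free connection $N_J$ is a combination of terms involving $\ch J$, and its fundamental form $\omega=h(J\cdot,\cdot)$ is parallel, hence closed; combined with the hypothesis that $\frg_\pm$ are subalgebras, this is exactly Definition~\ref{def: Born Lie algebras}. The direction ``integrable $\Rightarrow\ch J=0$'' is where the real content lies: here one must pass from the two first-order conditions $N_J=0$ and $\d\omega=0$ to the vanishing of $\ch J$, which is the nontrivial half of the Kähler theorem and the main obstacle of the proof. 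In the pseudo-Riemannian left-invariant setting this is a purely algebraic verification through the Koszul formula, so no analytic subtlety beyond the classical Riemannian case arises.
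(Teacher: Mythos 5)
Your proposal is correct and takes essentially the approach the paper intends: the paper states this proposition without proof, having already noted that an integrable Born structure makes $(h,J)$ pseudo-K\"ahler with fundamental form $\omega$, so the content is exactly the classical equivalence between ``$\d\omega=0$ and $N_J=0$'' and ``$\nabla^h J=0$'', with the subalgebra condition carried unchanged on both sides. Your identification $\omega(X,Y)=h(JX,Y)$ via the recursion operator $\omega\overset{-J}{\longrightarrow}h$ and your appeal to the standard identity relating $\nabla^h J$, $N_J$ and $\d\omega$ (which is purely algebraic, hence valid for indefinite metrics and computable on left-invariant fields via the Koszul formula) supply precisely the details the paper leaves implicit.
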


In particular, this yields the following equivalent description of an integrable Born structure on a Lie algebra:

\begin{corollary}
\label{cor:integrable_Born_Hermitian_version}
    An integrable Born structure on a Lie algebra $\frg$ is equivalent to a pseudo-Kähler structure $(h,J)$ on $\frg$ together with a pair of complementary subalgebras $\frg_{\pm}$ that are orthogonal with respect to $h$ and interchanged by $J$.
\end{corollary}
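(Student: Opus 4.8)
The plan is to prove the corollary by exhibiting two mutually inverse constructions between integrable Born structures and the stated Hermitian-type data, using the reformulation of integrability from Proposition~\ref{prop: integrability with nabla h} throughout. I would phrase it as a statement of an explicit bijection and then verify that each assignment lands in the correct class.

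For the forward direction, I start with an integrable Born structure $(g,h,\omega)$ with recursion operators $A,B,J$ and extract the data $\big((h,J),\frg_\pm\big)$. By Proposition~\ref{prop: integrability with nabla h} the eigenspaces $\frg_\pm$ of $A$ are subalgebras and $\ch J=0$; together with the compatibility $h(JX,JY)=h(X,Y)$ recorded in Table~\ref{tab:comp_bil_forms} this says precisely that $(h,J)$ is a pseudo-Kähler structure. Corollary~\ref{cor: L pm Lagrangian} gives that $\frg_\pm$ are orthogonal with respect to $h$, and Proposition~\ref{prop: algebraic properties} gives that $J$ interchanges the $\pm1$-eigenspaces of $A$. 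So the extracted data is of exactly the claimed type, and this step is essentially just bookkeeping of the earlier results.

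For the converse I reconstruct the Born structure. Given $(h,J)$ pseudo-Kähler and complementary subalgebras $\frg_\pm$ that are $h$-orthogonal and swapped by $J$, I set $A:=\operatorname{Id}$ on $\frg_+$ and $A:=-\operatorname{Id}$ on $\frg_-$, and then define $B:=-AJ$, $\omega:=h(J\cdot,\cdot)$ and $g:=h(B\cdot,\cdot)$. A sequence of short algebraic checks then confirms the structural identities: $A^2=\operatorname{Id}$ by construction, $AJ=-JA$ because $J$ maps $\frg_+$ to $\frg_-$ and back, and from these $B^2=(-AJ)^2=-A(JA)J=A^2J^2\cdot(-1)=\operatorname{Id}$ and $ABJ=-A^2J^2=\operatorname{Id}$, while $-J^2=\operatorname{Id}$ is the complex-structure condition. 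One also checks that $A$, $B$ and $-J$ are the recursion operators $g\to\omega$, $g\to h$ and $\omega\to h$: for instance $\omega(-JX,Y)=h(-J^2X,Y)=h(X,Y)$ identifies $-J$, and $BJ=-AJ^2=A$ identifies $A$ as the recursion operator $g\to\omega$. Since $\frg_\pm$ are subalgebras and $\ch J=0$, Proposition~\ref{prop: integrability with nabla h} then makes $(g,h,\omega)$ integrable.

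The one genuinely non-formal point, and the step I expect to be the main obstacle, is verifying that the reconstructed $g=h(B\cdot,\cdot)=-h(AJ\cdot,\cdot)$ is symmetric (hence a pseudo-metric) — this is exactly where the orthogonality hypothesis is consumed. The key observation is that $h$-orthogonality of $\frg_+$ and $\frg_-$ is equivalent to $A$ being $h$-symmetric: on each eigenspace $A$ acts as $\pm\operatorname{Id}$ so $h(AX,Y)=h(X,AY)$ is automatic when $X,Y$ lie in the same $\frg_\pm$, and across the two eigenspaces it reduces to the orthogonality $h(\frg_+,\frg_-)=0$. Combining $h$-symmetry of $A$ with the $h$-skewness of $J$ (from Table~\ref{tab:comp_bil_forms}) and $AJ=-JA$ shows $AJ$ is $h$-symmetric, so $g$ is symmetric; non-degeneracy is immediate from invertibility of $B$ and non-degeneracy of $h$, and non-degeneracy of $\omega$ follows likewise from invertibility of $J$. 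Finally, comparing the defining formulas shows the two assignments are mutually inverse, which yields the asserted equivalence.
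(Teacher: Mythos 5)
Your proposal is correct, and it takes essentially the same route as the paper: the paper states this corollary as an immediate consequence of Proposition~\ref{prop: integrability with nabla h} together with the algebraic facts of Proposition~\ref{prop: algebraic properties}, Corollary~\ref{cor: L pm Lagrangian} and Table~\ref{tab:comp_bil_forms}, and your argument is exactly that deduction written out in full, including the reconstruction $A=\pm\operatorname{Id}$ on $\frg_\pm$, $B=-AJ$, $\omega=h(J\cdot,\cdot)$, $g=h(B\cdot,\cdot)$ and the check (via $h$-symmetry of $A$, i.e.\ orthogonality of $\frg_\pm$) that $g$ is a genuine pseudo-Riemannian metric. The only cosmetic remark is that to identify $A$ as the recursion operator from $g$ to $\omega$ the identity actually used is $BA=J$ rather than $BJ=A$, but the two are equivalent given $B=-AJ$ and the anticommutation relations, so this is a matter of presentation, not a gap.
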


In view of Corollary~\ref{cor:integrable_Born_Hermitian_version} we will sometimes denote a Born structure by the tuple $(h,J,\frg_{\pm})$ rather than the three bilinear forms $(g,h,\omega)$.\medskip

A first example of a Born Lie algebra is the following:

\begin{example}
\label{ex: abelian is born}
    Every even-dimensional abelian Lie algebra is Born. An integrable Born structure can be constructed from the standard Kähler structure with a canonical choice of subalgebras $\frg_{\pm}$ (see~\cite{HKP24}).
\end{example}


\section{Born Lie algebras as bicross products}\label{sec:bicross_products}

In this section we show that every Born Lie algebra can be obtained from two pseudo-Riemannian Lie algebras of the same dimension and signature using \emph{bicross products}.

\begin{definition}[\cite{matched-pairs_90,Hypersymplectic_Andrada_06}]
\label{def: bicross product}
    Let $\frg_+,\frg_-$ be real finite-dimensional Lie algebras and let $\varphi\colon\frg_-\longrightarrow\End(\frg_+)$ and $\rho\colon\frg_+\longrightarrow\End(\frg_-)$ be representations satisfying \begin{equation}\label{eq:bicross_condition}
        \begin{aligned}
        \varphi(X_-)[X_+,Y_+]_+ -[\varphi(X_-)X_+,Y_+]_+-[X_+,\varphi(X_-)Y_+]_+ +\varphi(\rho(X_+)X_-)Y_+-\varphi(\rho(Y_+)X_-)X_+&=0,\\
        \rho(X_+)[X_-,Y_-]_- -[\rho(X_+)X_-,Y_-]_- -[X_-,\rho(X_+)Y_-]_- +\rho(\varphi(X_-)X_+)Y_--\rho(\varphi(Y_-)X_+)X_-&=0
    \end{aligned}
    \end{equation} for all $X_+,Y_+\in\frg_+$ and $X_-,Y_-\in\frg_-$, where $[\cdot,\cdot]_+$ and $[\cdot,\cdot]_-$ denote the Lie brackets of $\frg_+$ and $\frg_-$, respectively. The \emph{bicross product} $\frg_\varphi^\rho:=\frg_+\bowtie_\varphi^\rho\frg_-$ is the Lie algebra with underlying vector space $\frg_+\oplus\frg_-$ and Lie bracket given by $$[X_++X_-,Y_++Y_-]:=[X_+,Y_+]_++\varphi(X_-)Y_+-\varphi(Y_-)X_++[X_-,Y_-]_-+\rho(X_+)Y_--\rho(Y_+)X_-$$ for $X_+,Y_+\in\frg_+$ and $X_-,Y_-\in\frg_-$.
\end{definition}

\begin{remark}
    The Lie algebras $\frg_+$ and $\frg_-$ are Lie subalgebras of the bicross product $\frg_\varphi^\rho=\frg_+\bowtie_\varphi^\rho\frg_-$.   
\end{remark}

\begin{proposition}
\label{prop: born on bicross product}
Let $\frg_+$ and $\frg_-$ be real Lie algebras of the same dimension endowed with pseudo-Riemannian metrics $h_+$ and $h_-$ of the same signature. Let furthermore $Q\colon\frg_+\longrightarrow\frg_-$ be a linear isometry from $(\frg_+,h_+)$ to $(\frg_-,h_-)$. Then there is a (possibly non-integrable) Born structure on $\frg:=\frg_+\oplus\frg_-$, with $h:=h_+\oplus h_-$ and $J$ given by \begin{align*}
J:=\begin{pmatrix}
    0 & -Q^{-1}\\
    Q & 0\end{pmatrix}  
\end{align*} in the splitting $\frg=\frg_+\oplus \frg_-$. In particular, given a pair of representations $\varphi\colon\frg_-\longrightarrow\End(\frg_+)$ and $\rho\colon\frg_+\longrightarrow\End(\frg_-)$ satisfying the conditions in Definition~\ref{def: bicross product}, this Born structure gives rise to a (possibly non-integrable) Born structure on the bicross product $\frg_\varphi^\rho:=\frg_+\bowtie_\varphi^\rho\frg_-$. 
\end{proposition}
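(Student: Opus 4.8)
The plan is to construct the two remaining bilinear forms $g$ and $\omega$ explicitly from the given data $(h,J)$ and then verify the algebraic relations of Definition~\ref{def: Born Lie algebras}. Since a Born structure is determined by any two of its three recursion operators (the third being recovered via $ABJ=\operatorname{Id}$ from Proposition~\ref{prop: algebraic properties}), I would first produce the product structure $A$ directly from the splitting: define $A:=+\operatorname{Id}$ on $\frg_+$ and $A:=-\operatorname{Id}$ on $\frg_-$, so that $\frg_\pm$ are exactly its eigenspaces and $A^2=\operatorname{Id}$ automatically. The first thing to check is that the given $J$ is a genuine almost complex structure, i.e.\ $J^2=-\operatorname{Id}$; this follows immediately from the block computation $J^2=\operatorname{diag}(-Q^{-1}Q,-QQ^{-1})=-\operatorname{Id}$. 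Next I would verify $AJ=-JA$ directly in block form, which holds because $A$ is diagonal with opposite signs on the two blocks while $J$ is purely off-diagonal; this anti-commutation is exactly what makes $(J,A)$ an almost complex product pair.

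With $A$, $J$ and $h$ in hand I would define $B:=JA$ (equivalently $B=A^{-1}J^{-1}=-AJ$ up to the sign bookkeeping forced by $ABJ=\operatorname{Id}$, which I would fix carefully), and then define the two forms by $g:=h(A\,\cdot\,,\cdot)$ and $\omega:=h(B\,\cdot\,,\cdot)$, or equivalently read them off so that the diagram $g\xrightarrow{A}\omega$, $g\xrightarrow{B}h$, $\omega\xrightarrow{-J}h$ commutes. The key remaining point is that $g$ must be a symmetric bilinear form (a pseudo-Riemannian metric) and $\omega$ must be skew (a two-form). For this I would use the compatibility of $J$ with $h$: I must check that $J$ is an $h$-isometry, i.e.\ $h(JX,JY)=h(X,Y)$, which reduces to $h_+(Q^{-1}X_-,Q^{-1}Y_-)=h_-(X_-,Y_-)$ and $h_-(QX_+,QY_+)=h_+(X_+,Y_+)$ — precisely the statement that $Q$ is a linear isometry from $(\frg_+,h_+)$ to $(\frg_-,h_-)$. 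This is where the isometry hypothesis on $Q$ is essential, and it is the one genuinely load-bearing computation. Once $J$ is known to be $h$-orthogonal and $A$ is $h$-symmetric (which is clear since $h=h_+\oplus h_-$ respects the splitting and $A$ acts by scalars on each block), the symmetry of $g$ and the skewness of $\omega$ follow formally, and the signature claims of Corollary~\ref{cor: L pm Lagrangian} are consistent with $g$ being neutral.

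I expect the main obstacle to be purely notational rather than conceptual: keeping the eight sign conventions of Table~\ref{tab:comp_bil_forms} consistent with the chosen conventions $g=h(A\cdot,\cdot)$ and $\omega=h(B\cdot,\cdot)$ and with the direction of the arrows in the diagram (in particular the minus sign attached to $J$), so that all three relations $A^2=B^2=-J^2=\operatorname{Id}$ and the pairwise anti-commutation come out correctly. The cleanest route is to verify $A^2=\operatorname{Id}$, $J^2=-\operatorname{Id}$ and $AJ+JA=0$ by direct block multiplication, define $B:=-AJ$, and then deduce $B^2=-AJAJ=AAJJ=-\operatorname{Id}\cdot(-\operatorname{Id})\cdot$\ldots carefully from the anti-commutation, rather than computing $B$ in blocks and starting over.

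For the final sentence of the statement, once a Born structure $(g,h,\omega)$ exists on the vector space $\frg_+\oplus\frg_-$, transporting it to the bicross product $\frg_\varphi^\rho$ is immediate: the bicross product has the \emph{same underlying vector space} $\frg_+\oplus\frg_-$ and the \emph{same} direct-sum decomposition, and $g,h,\omega,A,B,J$ were defined using only this vector-space data and the metrics $h_\pm$, never the bracket. Hence the identical endomorphisms and forms satisfy the identical algebraic relations on $\frg_\varphi^\rho$, giving a (possibly non-integrable) Born structure there. I would emphasize that no integrability is claimed at this stage precisely because we have used none of the compatibility conditions~\eqref{eq:bicross_condition} relating $\varphi$, $\rho$ and the brackets; those conditions are what would later be needed to make $A$'s eigenspaces subalgebras and $J$ parallel.
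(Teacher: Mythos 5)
Your overall strategy is sound and, modulo one concrete error, is the explicit version of what the paper does: the paper's proof simply checks that $J$ interchanges $\frg_\pm$ with $J^2=-\Id$, that $\frg_\pm$ are $h$-orthogonal by the definition of $h=h_+\oplus h_-$, and that $h$ is $J$-invariant because $Q$ is an isometry, and then invokes the description of a Born structure by the data $(h,J,\frg_\pm)$ (cf.\ Corollary~\ref{cor:integrable_Born_Hermitian_version}); you instead reconstruct $A$, $B$, $g$, $\omega$ and verify the axioms directly, and you correctly isolate the isometry of $Q$ as the load-bearing input, as well as the reason the structure transports to $\frg_\varphi^\rho$ (the tensors use only the vector-space data, never the bracket) --- which is exactly how the paper concludes.

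The error is in your explicit formulas for the two forms, and as written the construction fails. With your $A=\pm\Id$ on $\frg_\pm$ and $B=JA$, the operator $B$ is $h$-\emph{symmetric}: in blocks $B$ is off-diagonal with entries $Q$ and $Q^{-1}$, and $h_-(QX_+,Y_-)=h_+(X_+,Q^{-1}Y_-)$ precisely because $Q$ is an isometry (this is also the entry $h(BX,Y)=h(X,BY)$ of Table~\ref{tab:comp_bil_forms}). Hence your $\omega:=h(B\cdot,\cdot)$ is a symmetric bilinear form, not a two-form; it is in fact the neutral metric $g$ of the Born structure, the one for which $\frg_\pm$ are null. Likewise your $g:=h(A\cdot,\cdot)$ cannot be the metric $g$ of the diagram: its recursion operator to $h$ would be $A$ itself rather than $B$ (forcing $ABJ=J\neq\Id$), and $\frg_\pm$ would be non-degenerate rather than null for it, contradicting (the Lie-algebra analogue of) Corollary~\ref{cor: L pm Lagrangian}. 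So your claimed equivalence between these formulas and ``reading the forms off the commuting diagram'' is false; only the latter is correct, and it yields $g=h(B\cdot,\cdot)$ and $\omega=h(J\cdot,\cdot)$, the fundamental two-form of $(h,J)$. With these definitions your verification does go through formally: $\omega$ is skew because $h(J\cdot,\cdot)=-h(\cdot,J\cdot)$ (equivalent to $J$-invariance of $h$, i.e.\ to $Q$ being an isometry), $g$ is symmetric because $B$ is $h$-symmetric, both are non-degenerate, and the recursion relations $g(A\cdot,\cdot)=\omega$, $g(B\cdot,\cdot)=h$, $\omega(-J\cdot,\cdot)=h$ follow from $BA=J$, $B^2=\Id$ and $J^2=-\Id$ via the anticommutation you established.
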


\begin{proof}
    Let $\frg$, $h$ and $J$ be as defined in Proposition~\ref{prop: born on bicross product}. Then clearly $J$ interchanges $\frg_{\pm}$ and $J^2=-\Id$. Moreover, by the definition of $h$, the subspaces $\frg_{\pm}$ are orthogonal with respect to $h$ and since $Q$ is an isometry, $h$ is invariant under $J$. Hence, $(h,J,\frg_{\pm})$ define a Born structure on $\frg$ and therefore also on $\frg_\varphi^\rho:=\frg_+\bowtie_\varphi^\rho\frg_-$.
\end{proof}

\begin{definition}
\label{def: born structure on bicross products}
    We call the Born structure on $\frg_\varphi^\rho=\frg_+\bowtie_\varphi^\rho\frg_-$ obtained as in Proposition~\ref{prop: born on bicross product} the \emph{Born structure associated with the data} $(\frg_{\pm},h_{\pm},Q,\varphi,\rho)$. 
\end{definition}

\begin{remark}
    Note that given the metric $h_-$ on $\frg_-$ and the isometry $Q\colon\frg_+\longrightarrow\frg_-$, the metric $h_+$ on $\frg_+$ is completely determined by $h_+=Q^*h_-$.
\end{remark}

Next, we study under which conditions on the representations $\varphi$ and $\rho$ the Born structure associated with the data $(\frg_\pm,h_\pm,Q,\varphi,\rho)$ is integrable. For this, we first need to compute the Levi-Civita connection $\nabla^h$ of the metric $h:=h_+\oplus h_-$ on $\frg_{\varphi}^{\rho}$.\medskip

Given a pseudo-Riemannian Lie algebra $(\frg,h)$ and an endomorphism $f\colon\frg\longrightarrow\frg$, we denote by $f^*$ the adjoint operator of $f$, i.e.~$h(f\cdot,\cdot)=h(\cdot,f^*\cdot)$. Moreover, we write $f=f^s+f^a$, where $f^s=\frac{1}{2}(f+f^*)$ denotes the $h$-symmetric part of $f$ and $f^a=\frac{1}{2}(f-f^*)$ denotes the $h$-antisymmetric part of $f$.

\begin{lemma}\label{lemma:LeviCivita}
    Let $X,Y,Z\in\frg_{\varphi}^{\rho}$. Then the Levi-Civita connection $\nabla^h$ of $h$ is given by: \begin{enumerate}
        \itemsep 0em
        \item $h(\nabla^h_{X_+}Y_+,Z)=h_+(\nabla^+_{X_+}Y_+,Z_+)+h_+(\varphi(Z_-)^sX_+,Y_+)$.
        \item $\nabla^h_{X_+}Y_-=-\varphi(Y_-)^sX_++\rho(X_+)^aY_-$.
        \item $\nabla^h_{X_-}Y_+=\varphi(X_-)^aY_+-\rho(Y_+)^sX_-$.
        \item $h(\nabla^h_{X_-}Y_-,Z)=h_-(\nabla^-_{X_-}Y_-,Z_-)+h_-(\rho(Z_+)^sX_-,Y_-)$.
    \end{enumerate}
    Here, the subscripts $\pm$ denote the projections to $\frg_{\pm}$, respectively, and $\nabla^{\pm}$ denote the Levi-Civita connections of $h_{\pm}$ on $\frg_{\pm}$.
\end{lemma}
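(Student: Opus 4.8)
The plan is to use the Koszul formula for the Levi-Civita connection of a left-invariant pseudo-Riemannian metric. Viewing $h$ as the inner product induced on $\frg_\varphi^\rho$ by the left-invariant metric, the quantities $h(Y,Z)$ are constant along left-invariant fields, so the directional-derivative terms in the general Koszul formula vanish and there remains
\[
2h(\nabla^h_X Y, Z) = h([X,Y],Z) - h([Y,Z],X) + h([Z,X],Y)
\]
for all $X,Y,Z\in\frg_\varphi^\rho$. I would then verify the four identities by a case analysis according to whether the arguments lie in $\frg_+$ or $\frg_-$, expanding each bracket by means of the bicross product formula in Definition~\ref{def: bicross product} and using that $\frg_+$ and $\frg_-$ are $h$-orthogonal, so that only the matching component of each expanded bracket survives a given pairing.

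For item~1, I would take $X=X_+$, $Y=Y_+$ and test against an arbitrary $Z=Z_++Z_-$. The bracket $[X_+,Y_+]$ together with the $\frg_+$-parts of $[Y_+,Z]$ and $[Z,X_+]$ produce exactly the three terms of the Koszul formula for $(\frg_+,h_+)$, which assemble into $2h_+(\nabla^+_{X_+}Y_+,Z_+)$. The remaining contributions come from the $\varphi$-terms in $[Y_+,Z_-]$ and $[Z_-,X_+]$; rewriting $h_+(\varphi(Z_-)Y_+,X_+)=h_+(\varphi(Z_-)^*X_+,Y_+)$ and combining with $h_+(\varphi(Z_-)X_+,Y_+)$ yields $2h_+(\varphi(Z_-)^sX_+,Y_+)$, which after dividing by two is the second summand. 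Item~4 follows by the same computation with the roles of $(\frg_+,\varphi,\nabla^+)$ and $(\frg_-,\rho,\nabla^-)$ exchanged, since both the bracket and $h$ are invariant under swapping the two summands together with $\varphi\leftrightarrow\rho$.

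For items~2 and~3 the output of $\nabla^h$ lands in a mixed position, so I would instead compute $h(\nabla^h_{X_+}Y_-,Z)$ for arbitrary $Z$ and read off the element by nondegeneracy of $h$. Expanding the three brackets, the terms pairing with $Z_+$ combine, after passing to the adjoint of $\varphi(Y_-)$, into $-2h_+(\varphi(Y_-)^sX_+,Z_+)$, while the terms pairing with $Z_-$ combine, after passing to the adjoint of $\rho(X_+)$, into $2h_-(\rho(X_+)^aY_-,Z_-)$; this gives $\nabla^h_{X_+}Y_-=-\varphi(Y_-)^sX_++\rho(X_+)^aY_-$. The appearance of the \emph{symmetric} part of $\varphi$ alongside the \emph{antisymmetric} part of $\rho$ is forced by the signs with which these operators enter the bicross bracket, and item~3 is once more the mirror computation.

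The argument is essentially bookkeeping, and no use of the compatibility conditions~\eqref{eq:bicross_condition} is needed, since those are only required for $\frg_\varphi^\rho$ to be a Lie algebra in the first place. The only point demanding care is tracking which component of each expanded bracket survives the pairing against $Z_+$ or $Z_-$ and then consistently rewriting every term in the form $h_\pm(\cdot\,,\cdot)$ with $\varphi$ or $\rho$ acting in the correct slot, so that the symmetric and antisymmetric parts emerge with the signs stated in the lemma.
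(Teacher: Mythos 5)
Your proof is correct and takes essentially the same approach as the paper: the Koszul formula for left-invariant metrics, a case analysis over the $\frg_\pm$-components, expansion of the bicross bracket, and $h$-orthogonality of $\frg_+$ and $\frg_-$ to assemble the adjoint terms into symmetric and antisymmetric parts. The only cosmetic difference is that the paper deduces item~3 from item~2 via torsion-freeness of $\nabla^h$, whereas you invoke the $(+,\varphi)\leftrightarrow(-,\rho)$ swap symmetry; both shortcuts are valid.
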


\begin{proof}
    Recall the Koszul formula for the Levi-Civita connection $\nabla^h$: $$2h(\nabla^h_uv,w)=h([u,v],w)-h(v,[u,w])-h(u,[v,w])$$ for $u,v,w\in\frg_\varphi^\rho$. Let $u=X_+$, $v=Y_+$ and $w=Z=Z_++Z_-$. Then we have: \begin{align*}
        2h(\nabla^h_{X_+}Y_+,Z)&=h([X_+,Y_+],Z)-h(Y_+,[X_+,Z])-h(X_+,[Y_+,Z])\\
        &=h([X_+,Y_+]_+,Z)\\
        &\quad-h(Y_+,[X_+,Z_+]_+-\varphi(Z_-)X_++\rho(X_+)Z_-)\\
        &\quad-h(X_+,[Y_+,Z_+]_+-\varphi(Z_-)Y_++\rho(Y_+)Z_-)\\
        &=h_+([X_+,Y_+]_+,Z_+)-h_+(Y_+,[X_+,Z_+]_+)-h_+(X_+,[Y_+,Z_+])\\
        &\quad+h_+(Y_+,\varphi(Z_-)X_+)+h_+(X_+,\varphi(Z_-)Y_+)\\
        &=2h_+(\nabla^+_{X_+}Y_+,Z_+)+2h_+(\varphi(Z_-)^sX_+,Y_+),
    \end{align*} hence $h(\nabla^h_{X_+}Y_+,Z)=h_+(\nabla^+_{X_+}Y_+,Z_+)+h_+(\varphi(Z_-)^sX_+,Y_+)$. Similarly we compute \begin{align*}
        2h(\nabla^h_{X_+}Y_-,Z)&=h([X_+,Y_-],Z)-h(Y_-,[X_+,Z])-h(X_+,[Y_-,Z])\\
        &=h(-\varphi(Y_-)X_++\rho(X_+)Y_-,Z)\\
        &\quad-h(Y_-,[X_+,Z_+]_+-\varphi(Z_-)X_++\rho(X_+)Z_-)\\
        &\quad-h(X_+,\varphi(Y_-)Z_++[Y_-,Z_-]_--\rho(Z_+)Y_-)\\
        &=-h_+(\varphi(Y_-)X_+,Z_+)+h_-(\rho(X_+)Y_-,Z_-)\\
        &\quad-h_-(Y_-,\rho(X_+)Z_-)-h_+(X_+,\varphi(Y_-)Z_+)\\
        &=-2h(\varphi(Y_-)^sX_+,Z_+)+2h_-(\rho(X_+)^aY_-,Z_-)\\
        &=2h(-\varphi(Y_-)^sX_++\rho(X_+)^aY_-,Z).
    \end{align*} To obtain the formula for $\nabla^h_{X_-}Y_+$ we use the fact that $\nabla^h$ is torsion-free. Finally, the formula for $\nabla^h_{X_-}Y_-$ is obtained by an analogous computation to that of the formula for $\nabla^h_{X_+}Y_+$.
\end{proof}

By Proposition~\ref{prop: integrability with nabla h}, to show that the Born structure associated with the data $(\frg_\pm,h_\pm,Q,\varphi,\rho)$ is integrable it suffices to show under which conditions on $\varphi$ and $\rho$ the complex structure $J$ is $\nabla^h$-parallel. This is the content of the following result. 

\begin{theorem}\label{thm:bicross_Born}
    The Born structure associated with the data $(\frg_\pm,h_\pm,Q,\varphi,\rho)$ is integrable if and only if \begin{enumerate}
        \itemsep 0em
        \item $\varphi(X_-)^a=Q^{-1}\nabla^-_{X_-}Q$ for all $X_-\in\frg_+$.
        \item $\varphi(QX_+)^sY_+=\varphi(QY_+)^sX_+$ for all $X_+,Y_+\in\frg_+$.
        \item $\rho(X_+)^a=Q\nabla^+_{X_+}Q^{-1}$ for all $X_+\in\frg_+$.
        \item $\rho(X_+)^sQY_+=\rho(Y_+)^sQX_+$ for all $X_+,Y_+\in\frg_+$.
    \end{enumerate}
\end{theorem}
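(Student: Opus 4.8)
The plan is to apply Proposition~\ref{prop: integrability with nabla h}, which says that the Born structure is integrable exactly when the eigenspaces $\frg_\pm$ of $A$ are subalgebras and $J$ is parallel for $\nabla^h$. The subalgebra condition is automatic here, since $\frg_+$ and $\frg_-$ are always Lie subalgebras of the bicross product $\frg_\varphi^\rho$. Thus everything reduces to characterizing $\nabla^h J=0$ in terms of $\varphi$ and $\rho$. I would first record that, in the splitting $\frg=\frg_+\oplus\frg_-$, the complex structure acts by $JX_+=QX_+$ and $JX_-=-Q^{-1}X_-$, and that $Q$ being an isometry is equivalent to $Q^*=Q^{-1}$ for the $h$-adjoint $Q^*$; this last identity is what lets me move every pairing into a single factor.

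Since $\nabla^hJ=0$ is the same as $(\nabla^h_XJ)Y=\nabla^h_X(JY)-J\nabla^h_XY=0$ for all $X,Y$, and both sides are linear, it suffices to take $X,Y$ each in $\frg_+$ or $\frg_-$, giving four cases. In each case I would substitute the explicit $J$ and expand $\nabla^h$ with the four formulas of Lemma~\ref{lemma:LeviCivita}, then split off the $\frg_+$- and $\frg_-$-components by pairing against a test vector and using non-degeneracy of $h$ together with $Q^*=Q^{-1}$. The two diagonal cases already produce all four conditions: for $(X_+,Y_+)$, vanishing of the $\frg_-$-component gives $\rho(X_+)^a=Q\nabla^+_{X_+}Q^{-1}$ (condition~3), while vanishing of the $\frg_+$-component is a symmetric identity in $\varphi$ equivalent to condition~2; the case $(X_-,Y_-)$ yields conditions~1 and~4 by the symmetry $\frg_+\leftrightarrow\frg_-$, $\varphi\leftrightarrow\rho$, $Q\leftrightarrow Q^{-1}$. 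I expect the two off-diagonal cases $(X_+,Y_-)$ and $(X_-,Y_+)$ to reproduce exactly conditions 1--4 and hence impose nothing new, which I would check directly.

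The one genuinely nontrivial point is turning the symmetric-part identities into conditions~2 and~4. Concretely, in the $(X_+,Y_+)$ case the $\frg_+$-component vanishes if and only if
\[
h_+\bigl(\varphi(QY_+)^sX_+,Z_+\bigr)=h_+\bigl(\varphi(QZ_+)^sX_+,Y_+\bigr)
\]
for all $X_+,Y_+,Z_+\in\frg_+$. The trilinear form $T(X_+,Y_+,Z_+):=h_+(\varphi(QY_+)^sX_+,Z_+)$ is automatically symmetric in its first and third arguments because $\varphi(\cdot)^s$ is $h_+$-self-adjoint, and the displayed identity is precisely symmetry in the second and third arguments; together these make $T$ totally symmetric, so symmetry in the first two arguments plus non-degeneracy of $h_+$ yields $\varphi(QY_+)^sX_+=\varphi(QX_+)^sY_+$, i.e.\ condition~2. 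The same polarization argument applied to $\rho$ and $h_-$ produces condition~4. The main bookkeeping obstacle is the $\frg_-$-component of $\nabla^h_{X_+}Y_+$ in item~1 of Lemma~\ref{lemma:LeviCivita}, which is defined only implicitly by duality; I would clear it by systematically rewriting every inner product through $Q$ so that all pairings live in a single space before extracting the operator identities.
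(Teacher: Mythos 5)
Your proposal is correct and follows essentially the same route as the paper's proof: reduce integrability to $\nabla^h J=0$ via Proposition~\ref{prop: integrability with nabla h} (the subalgebra condition being automatic for a bicross product), expand the four cases of $(\nabla^h_XJ)Y$ using Lemma~\ref{lemma:LeviCivita}, and extract the operator identities by non-degeneracy of $h$, with the off-diagonal cases turning out to be redundant exactly as you predict. Your two small variations---the polarization of the trilinear form $T$ to obtain condition~2 (the paper instead moves $\varphi(\cdot)^s$ onto the test vector by self-adjointness and concludes directly from non-degeneracy) and the $+\leftrightarrow-$, $\varphi\leftrightarrow\rho$, $Q\leftrightarrow Q^{-1}$ swap for the $(X_-,Y_-)$ case (which the paper dispatches as ``a similar computation'')---are both valid and change nothing essential.
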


\begin{proof}
    As explained above, to show that the Born structure is integrable it suffices to show that $\nabla^hJ=0$. We will make use of Lemma~\ref{lemma:LeviCivita} throughout the proof. We have to consider four different cases: \medskip
    
    \textbullet\, Case $X_+,Y_+\in\frg_+$: We compute $$h((\nabla^h_{X_+}J)Y_+,Z)=h(\nabla^h_{X_+}JY_+,Z)-h(J\nabla^h_{X_+}Y_+,Z),$$ where \begin{align*}
        h(\nabla^h_{X_+}JY_+,Z)&=h(\nabla^h_{X_+}QY_+,Z)=h(-\varphi(QY_+)^sX_++\rho(X_+)^aQY_+,Z)\\
        &=-h_+(\varphi(QY_+)^sX_+,Z_+)+h_-(\rho(X_+)^aQY_+,Z_-)\\
        h(J\nabla^h_{X_+}Y_+,Z)&=-h(\nabla^h_{X_+}Y_+,J(Z))=-h(\nabla^h_{X_+}Y_+,-Q^{-1}Z_-+QZ_+)\\
        &=h_+(\nabla^+_{X_+}Y_+,Q^{-1}Z_-)-h_+(\varphi(QZ_+)^sX_+,Y_+).
    \end{align*} Hence \begin{align*}
        h((\nabla^h_{X_+}J)Y_+,Z)&=-h_+(\nabla^+_{X_+}Y_+,Q^{-1}Z_-)+h_-(\rho(X_+)^aQY_+,Z_-)\\
        &\quad-h_+(X_+,\varphi(QY_+)^sZ_+-\varphi(QZ_+)^sY_+).
    \end{align*} This is zero if and only if $\varphi(QY_+)^sZ_+=\varphi(QZ_+)^sY_+$ for all $Y_+,Z_+\in\frg_+$ and $\rho(X_+)^a=Q\nabla^+_{X_+}Q^{-1}$ for all $X_+\in\frg_+$ since $h_+(\nabla^+_{X_+}Y_+,Q^{-1}Z_-)=h_-(Q\nabla^+_{X_+}Y_+,Z_-)$.\medskip
    
    \textbullet\, Case $X_+\in\frg_+,Y_-\in\frg_-$: We compute $$h((\nabla^h_{X_+}J)Y_-,Z)=h(\nabla^h_{X_+}JY_-,Z)-h(J\nabla^h_{X_+}Y_-,Z),$$ where \begin{align*}
        h(\nabla^h_{X_+}JY_-,Z)&=-h(\nabla^h_{X_+}Q^{-1}Y_-,Z)\\
        &=-h_+(\nabla^+_{X_+}Q^{-1}Y_-,Z_+)-h_+(\varphi(Z_-)^sX_+,Q^{-1}Y_-)\\
        &=-h_+(Q^{-1}\rho(X_+)^aY_-,Z_+)-h_+(X_+,\varphi(Z_-)^sQ^{-1}Y_-),\\
        h(J\nabla^h_{X_+}Y_-,Z)&=h(J(-\varphi(Y_-)^sX_++\rho(X_+)^aY_-),Z)\\
        &=-h_-(Q\varphi(Y_-)^sX_+,Z_-)-h_+(Q^{-1}\rho(X_+)^aY_-,Z_+)\\
        &=-h_+(X_+,\varphi(Y_-)^sQ^{-1}Z_-)-h_+(Q^{-1}\rho(X_+)^aY_-,Z_+).
    \end{align*} Hence $$h((\nabla^h_{X_+}J)Y_-,Z)=-h_+(X_+,\varphi(Z_-)^sQ^{-1}Y_--\varphi(Y_-)^sQ^{-1}Z_-).$$
    
    This is zero if and only if $\varphi(Z_-)^sQ^{-1}Y_-=\varphi(Y_-)^sQ^{-1}Z_-$ for all $Y_-,Z_-\in\frg_-$. Setting $\tilde{Y}_+=Q^{-1}Y_-$ and $\tilde{Z}_+=Q^{-1}Z_-$ we obtain $\varphi(Q\tilde{Z}_+)^s\tilde{Y}_+=\varphi(Q\tilde{Y}_+)^s\tilde{Z}_+$, which is the condition obtained before.\medskip
    
    \textbullet\, Case $X_-\in\frg_-,Y_+\in\frg_+$: We compute $$h((\nabla^h_{X_-}J)Y_+,Z)=h(\nabla^h_{X_-}JY_+,Z)-h(J\nabla^h_{X_-}Y_+,Z),$$ where \begin{align*}
        h(\nabla^h_{X_-}JY_+,Z)&=h(\nabla^h_{X_-}QY_+,Z)\\
        &=h_-(\nabla^-_{X_-}QY_+,Z_-)+h_-(\rho(Z_+)^sX_-,QY_+),\\
        h(J\nabla^h_{X_-}Y_+,Z)&=h(J(\varphi(X_-)^aY_+-\rho(Y_+)^sX_-),Z)\\
        &=h_-(Q\varphi(X_-)^aY_+,Z_-)+h_+(Q^{-1}\rho(Y_+)^sX_-,Z_+).
    \end{align*} This is zero if and only if $\varphi(X_-)^a=Q^{-1}\nabla^-_{X_-}Q$ for all $X_-\in\frg_-$ and $\rho(Z_+)^sQY_+=\rho(Y_+)^sQZ_+$ for all $Y_+,Z_+\in\frg_+$.\medskip
    
    \textbullet\, Case $X_-,Y_-\in\frg_-$: A similar computation as in the previous cases gives us again the conditions $\varphi(X_-)^a=Q^{-1}\nabla^-_{X_-}Q$ for all $X_-\in\frg_-$ and $\rho(Z_+)^sQY_+=\rho(Y_+)^sQZ_+$ for all $Y_+,Z_+\in\frg_+$.
\end{proof}

Conversely, every Born Lie algebra can be obtained by some data $(\frg_\pm,h_\pm,Q,\varphi,\rho)$:

\begin{proposition}\label{prop:every_Born_is_bicross}
    Let $\frg$ be a Born Lie algebra. Then for every integrable Born structure $(h,J,\frg_{\pm})$ on $\frg$ there exist a pair of representations $\varphi\colon\frg_-\longrightarrow\End(\frg_+)$ and $\rho\colon \frg_+\longrightarrow\End(\frg_-)$ and a linear isometry $Q\colon (\frg_+,h_+:=h|_{\frg_+})\longrightarrow (\frg_-,h_-:=h|_{\frg_-})$ such that $\frg\cong \frg_+\bowtie_\varphi^\rho\frg_-$ and $(h,J,\frg_{\pm})$ is equivalent to the Born structure associated with the data $(\frg_\pm,h_\pm,Q,\varphi,\rho)$.
\end{proposition}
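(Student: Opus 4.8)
The plan is to exhibit $\frg$ directly as a bicross product by decomposing its own bracket along the splitting $\frg=\frg_+\oplus\frg_-$. By Corollary~\ref{cor:integrable_Born_Hermitian_version}, the integrable Born structure amounts to a pseudo-Kähler structure $(h,J)$ together with complementary subalgebras $\frg_\pm$ that are $h$-orthogonal and interchanged by $J$; write $h_\pm:=h|_{\frg_\pm}$ and let $\pi_\pm\colon\frg\to\frg_\pm$ denote the projections. I would make the only reasonable choices: set $Q:=J|_{\frg_+}$, and for $X_-\in\frg_-$, $X_+\in\frg_+$ define $\varphi(X_-):=\pi_+\circ\ad(X_-)|_{\frg_+}$ and $\rho(X_+):=\pi_-\circ\ad(X_+)|_{\frg_-}$, i.e.\ $\varphi(X_-)Y_+=\pi_+[X_-,Y_+]$ and $\rho(X_+)Y_-=\pi_-[X_+,Y_-]$.

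First I would dispatch the linear-algebraic claims. Since $J$ interchanges $\frg_\pm$ and is invertible, $Q\colon\frg_+\to\frg_-$ is a linear isomorphism, and it is an isometry because $h(JX_+,JY_+)=h(X_+,Y_+)$ for the pseudo-Kähler metric (Table~\ref{tab:comp_bil_forms}), whence $h_-(QX_+,QY_+)=h_+(X_+,Y_+)$. From $J^2=-\Id$ one gets $J|_{\frg_-}=-Q^{-1}$: any $X_-\in\frg_-$ equals $QX_+=JX_+$ for a unique $X_+$, so $JX_-=J^2X_+=-X_+=-Q^{-1}X_-$. Thus $J$ has exactly the block form of Proposition~\ref{prop: born on bicross product}, and $h=h_+\oplus h_-$ by orthogonality.

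The core step is to check that $\varphi$ and $\rho$ are representations satisfying~\eqref{eq:bicross_condition} and that the bicross bracket reproduces the bracket of $\frg$. The reconstruction is immediate: splitting $[X_++X_-,Y_++Y_-]$ into its four bracket terms, using that $\frg_\pm$ are subalgebras to identify the $[\cdot,\cdot]_\pm$ parts, and using antisymmetry with the definitions to write the mixed brackets as $[X_+,Y_-]=-\varphi(Y_-)X_++\rho(X_+)Y_-$ and $[X_-,Y_+]=\varphi(X_-)Y_+-\rho(Y_+)X_-$, recovers verbatim the bracket of Definition~\ref{def: bicross product}. For the algebraic conditions I would run the standard matched-pair argument: expanding the Jacobi identity of $\frg$ on a triple of type $(+,+,-)$ and projecting, the $\frg_-$-component gives exactly $\rho([X_+,Y_+]_+)=[\rho(X_+),\rho(Y_+)]$ (so $\rho$ is a representation) while the $\frg_+$-component gives precisely the first identity in~\eqref{eq:bicross_condition}; symmetrically, a triple of type $(-,-,+)$ yields that $\varphi$ is a representation and the second identity in~\eqref{eq:bicross_condition}. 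This matched-pair bookkeeping, with the signs coming from antisymmetry, is the main (though routine) obstacle.

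Finally I would conclude the equivalence. The reconstruction above shows the identity map on $\frg=\frg_+\oplus\frg_-$ is a Lie algebra isomorphism $\frg\cong\frg_+\bowtie_\varphi^\rho\frg_-$. Under this identification the Born structure associated with $(\frg_\pm,h_\pm,Q,\varphi,\rho)$ has metric $h_+\oplus h_-=h$, complex structure with block form equal to $J$, and $A$-eigenspaces $\frg_\pm$. Since by Corollary~\ref{cor:integrable_Born_Hermitian_version} an integrable Born structure is determined by the tuple $(h,J,\frg_\pm)$, the two structures coincide, so the identity map provides the required equivalence.
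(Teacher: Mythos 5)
Your proposal is correct and follows essentially the same route as the paper: the same choices $Q=J|_{\frg_+}$, $\varphi(X_-)Y_+=[X_-,Y_+]_+$, $\rho(X_+)Y_-=[X_+,Y_-]_-$, with the bracket of $\frg$ reconstructed as the bicross bracket and the equivalence given by the identity map. The only difference is one of thoroughness: the paper verifies the representation property by a direct Jacobi computation and dismisses the matched-pair conditions~\eqref{eq:bicross_condition} as clear (they hold automatically because the reconstructed bracket is the bracket of $\frg$, which satisfies Jacobi), whereas you derive both the representation property and~\eqref{eq:bicross_condition} explicitly from the mixed Jacobi identities.
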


\begin{proof}
    Let $(h,J,\frg_{\pm})$ be an integrable Born structure on $\frg$. Then $\frg_+$ and $\frg_-$ are orthogonal with respect to $h$ and $h_+:=h|_{\frg_+}$ and $h_-:=h|_{\frg_-}$ define pseudo-Riemannian metrics on $\frg_+$ and $\frg_-$, respectively. Furthermore, we know that $J$ interchanges $\frg_+$ and $\frg_-$. We set $Q:=J|_{\frg_+}\colon\frg_+\longrightarrow \frg_-$. Then it follows from $h(J\cdot,J\cdot)=h(\cdot ,\cdot)$ that $Q$ is a linear isometry with respect to $h_+$ and $h_-$, and it follows from $J^2=-\Id$ that
    \begin{equation}\label{eq:complex_structure_Q}
        J=\begin{pmatrix}
            0&-Q^{-1}\\
            Q & 0
        \end{pmatrix}.
    \end{equation}
    Since $(h,J,\frg_{\pm})$ is integrable, the subspaces $\frg_{\pm}$ are Lie subalgebras of $\frg$. Moreover, we have $\frg_+\oplus\frg_-=\frg$. We define
    \begin{align*}
        \varphi\colon \frg_-\longrightarrow \End(\frg_+),\quad \rho\colon \frg_+\longrightarrow \End(\frg_-)    
    \end{align*}
    by setting 
    \begin{align*}
        \varphi(X)Y:=[X,Y]_+,\quad \rho(Y)X:=[Y,X]_-    
    \end{align*}
    for $X\in \frg_-$, $Y\in \frg_+$, where $[\cdot,\cdot]$ denotes the Lie bracket of $\frg$. We claim that $\varphi$ and $\rho$ are Lie algebra representations. Indeed, for $X,Y\in\frg_-$, $Z\in\frg_+$, we have
    \begin{align*}
        \varphi\left([X,Y]\right)Z=&\left[[X,Y],Z\right]_+\\
        =&\left(\left[X,[Y,Z]\right]+\left[Y,[Z,X]\right]\right)_+\\
        =&\left[X,[Y,Z]_+\right]_+ +\left[Y,[Z,X]_+\right]_+\\
        =&\left[\varphi(X),\varphi(Y)\right]Z,
    \end{align*}
    where we used the Jacobi identity for the second equality and that $\frg_-$ is a subalgebra for the third equality. This shows that $\varphi$ is a Lie algebra representation. The proof for $\rho$ works analogously. Clearly, we have $\frg=\frg_+\bowtie_\varphi^\rho\frg_-$ as Lie algebras. By construction the Born structures are equivalent.
\end{proof}

Taking one of the representations to be the trivial representation, say $\rho\equiv 0$, the bicross product reduces to the well-known \emph{semidirect product}. Therefore, by Proposition~\ref{prop: born on bicross product}, given two Lie algebras $\frg_+$ and $\frg_-$ with pseudo-Riemannian metrics $h_+$ and $h_-$, a linear isometry $Q\colon (\frg_+,h_+)\longrightarrow(\frg_-,h_-)$ and a representation $\varphi\colon\frg_-\longrightarrow \Der(\frg_+)$, we can construct a Born structure on $\frg_+\rtimes_\varphi\frg_-$, the \emph{Born structure associated with the data} $(\frg_{\pm}, h_{\pm}, Q,\varphi)$. In this case Theorem~\ref{thm:bicross_Born} reduces to the following statement:

\begin{theorem}\label{thm:integrable_Born_semidirect_product}
    The Born structure associated with the data $(\frg_{\pm},h_{\pm},Q,\varphi)$ is integrable if and only if \begin{enumerate}
        \itemsep 0em
        \item $\frg_+$ is abelian.
        \item $\varphi(X_-)^a=Q^{-1}\nabla^-_{X_-}Q$ for all $X_-\in\frg_-$.
        \item $\varphi(QX_+)^sY_+=\varphi(QY_+)^sX_+$ for all $X_+,Y_+\in\frg_+$.
    \end{enumerate}
    Here $\cn^{\pm}$ denotes the Levi-Civita connection of $h_{\pm}$.
\end{theorem}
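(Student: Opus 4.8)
The plan is to obtain the statement directly from Theorem~\ref{thm:bicross_Born} by specialising to the trivial representation $\rho\equiv 0$. With $\rho\equiv 0$ the first identity in~\eqref{eq:bicross_condition} reduces to the statement that each $\varphi(X_-)$ is a derivation of $\frg_+$, while the second identity becomes vacuous; hence the bicross product degenerates to the semidirect product $\frg_+\rtimes_\varphi\frg_-$ and its associated Born structure is the one furnished by Proposition~\ref{prop: born on bicross product}. It therefore suffices to substitute $\rho\equiv 0$ into the four integrability conditions of Theorem~\ref{thm:bicross_Born} and simplify.

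The two conditions involving $\varphi$, namely $\varphi(X_-)^a=Q^{-1}\nabla^-_{X_-}Q$ and $\varphi(QX_+)^sY_+=\varphi(QY_+)^sX_+$, do not mention $\rho$ and so reappear verbatim as conditions~(2) and~(3) of the present statement. The fourth condition of Theorem~\ref{thm:bicross_Born}, $\rho(X_+)^sQY_+=\rho(Y_+)^sQX_+$, becomes $0=0$ and thus carries no information. The only condition that has to be rewritten is the third, $\rho(X_+)^a=Q\nabla^+_{X_+}Q^{-1}$: its left-hand side now vanishes, and since $Q$ is a linear isomorphism we may conjugate by $Q$ to see that it is equivalent to $\nabla^+_{X_+}Y_+=0$ for all $X_+,Y_+\in\frg_+$, i.e.\ to the identical vanishing of the Levi-Civita connection of $h_+$.

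Finally I would record the elementary equivalence that $\nabla^+$ vanishes identically if and only if $\frg_+$ is abelian. If $\nabla^+\equiv 0$, then torsion-freeness yields $[X_+,Y_+]_+=\nabla^+_{X_+}Y_+-\nabla^+_{Y_+}X_+=0$, so $\frg_+$ is abelian; conversely, if $\frg_+$ is abelian then every bracket appearing in the Koszul formula recalled in the proof of Lemma~\ref{lemma:LeviCivita} vanishes, forcing $\nabla^+\equiv 0$. This collapses the rewritten third condition (and the already-trivial fourth condition) into the single requirement that $\frg_+$ be abelian, which is condition~(1) of the statement. As the argument is a verbatim specialisation of Theorem~\ref{thm:bicross_Born}, there is no real obstacle; the only point demanding a short independent argument is this last equivalence, which is exactly where the abelianness of $\frg_+$ comes from.
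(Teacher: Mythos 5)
Your proof is correct and is essentially the paper's own argument: the paper presents this theorem precisely as the specialisation of Theorem~\ref{thm:bicross_Born} to $\rho\equiv 0$, leaving the reduction implicit. Your explicit verification, including the equivalence between $\nabla^+\equiv 0$ and $\frg_+$ being abelian (via torsion-freeness in one direction and the Koszul formula in the other), supplies exactly the details the paper omits.
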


As an application of Theorem~\ref{thm:integrable_Born_semidirect_product} we show that we can construct an integrable Born structure from any flat pseudo-Riemannian Lie algebra.

\begin{corollary}\label{cor:Born_from_flat}
    Let $(\frg_-,h_-)$ be a flat pseudo-Riemannian Lie algebra of dimension $n$. Let $\frg_+=\R^n$, $h_+=Q^*h_-$, where $Q\colon\frg_+\longrightarrow \frg_-$ is an isomorphism of vector spaces, and define $\varphi\colon\frg_-\longrightarrow\Der(\frg_+)$ by $\varphi(X_-)=Q^{-1}\nabla^-_{X_-}Q$. Then the Born structure on $\frg_+\rtimes_\varphi\frg_-$ associated with the data $(\frg_\pm,h_\pm,Q,\varphi)$ is integrable. Moreover, the metric $h$ is flat.
\end{corollary}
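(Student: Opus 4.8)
The plan is to verify the three integrability conditions of Theorem~\ref{thm:integrable_Born_semidirect_product} and then compute the curvature of $h$ directly from Lemma~\ref{lemma:LeviCivita}. First I would check that $\varphi$ is genuinely a representation $\frg_-\longrightarrow\Der(\frg_+)$: since $\frg_+=\R^n$ is abelian every endomorphism is a derivation, and flatness of $(\frg_-,h_-)$ means precisely that $X_-\longmapsto\nabla^-_{X_-}$ is a Lie algebra homomorphism (vanishing curvature is exactly the identity $\nabla^-_{[X_-,Y_-]}=[\nabla^-_{X_-},\nabla^-_{Y_-}]$). As $\varphi(X_-)=Q^{-1}\nabla^-_{X_-}Q$ is the conjugate of this representation by the fixed isomorphism $Q$, it is again a representation, so the data $(\frg_\pm,h_\pm,Q,\varphi)$ does define a Born structure.

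The key observation, which trivializes conditions 2 and 3, is that each $\varphi(X_-)$ is $h_+$-antisymmetric. Indeed, metric compatibility of the Levi-Civita connection on the Lie algebra $(\frg_-,h_-)$, where $h_-$ is constant on left-invariant fields, gives $h_-(\nabla^-_{X_-}U,V)+h_-(U,\nabla^-_{X_-}V)=0$, so $\nabla^-_{X_-}$ is $h_-$-antisymmetric. Since $h_+=Q^*h_-$, conjugation by $Q$ turns this into $h_+$-antisymmetry of $\varphi(X_-)$. Hence $\varphi(X_-)^a=\varphi(X_-)=Q^{-1}\nabla^-_{X_-}Q$, which is condition 2, while $\varphi(X_-)^s=0$, which makes both sides of condition 3 vanish. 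Condition 1 holds by construction, so Theorem~\ref{thm:integrable_Born_semidirect_product} yields integrability.

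For flatness, I would substitute $\rho\equiv0$, $\nabla^+=0$ (as $\frg_+$ is abelian) and $\varphi^s=0$ into Lemma~\ref{lemma:LeviCivita}. This collapses the connection to the block form $\nabla^h_{X_+}=0$, $\nabla^h_{X_-}Y_+=\varphi(X_-)Y_+$, and $\nabla^h_{X_-}Y_-=\nabla^-_{X_-}Y_-$. Computing the curvature $R^h(U,V)W$ then splits into three cases according to how many of $U,V$ lie in $\frg_-$: when at most one does, the vanishing of $\nabla^h_{X_+}$ together with $[\frg_+,\frg_-]\subseteq\frg_+$ forces $R^h=0$; when both lie in $\frg_-$, acting on $\frg_+$ the curvature reduces to $[\varphi(X_-),\varphi(Y_-)]-\varphi([X_-,Y_-]_-)$, which vanishes since $\varphi$ is a representation, and acting on $\frg_-$ it reduces to $R^-(X_-,Y_-)$, which vanishes since $(\frg_-,h_-)$ is flat. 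I expect the main (though still routine) work to be this case analysis; there is no genuine obstacle, as each case collapses immediately using exactly one of the three inputs: $\frg_+$ abelian, $\varphi$ a representation, and $\frg_-$ flat.
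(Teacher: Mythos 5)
Your proof is correct. For the integrability statement it coincides with the paper's own argument: the paper likewise deduces it from Theorem~\ref{thm:integrable_Born_semidirect_product} by noting that each $\varphi(X_-)=Q^{-1}\nabla^-_{X_-}Q$ is $h_+$-antisymmetric (so condition 2 holds and condition 3 holds trivially) and that $\varphi$ is a representation precisely because $\nabla^-$ is flat. Where you genuinely depart from the paper is in the flatness of $h$. The paper computes no curvature at all: it observes that the identity map is a linear isometry from $(\frg_+\rtimes_\varphi\frg_-,h)$ to the direct-sum Lie algebra $(\frg_+\oplus\frg_-,h)$, which is flat, and then invokes the Azencott--Wilson theorem in the form of \cite[Proposition~1.13]{ContiGil_23} to transfer flatness across this linear isometry. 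You instead specialize Lemma~\ref{lemma:LeviCivita} to $\rho\equiv0$, $\nabla^+=0$ and $\varphi^s=0$ to get the block form
\begin{equation*}
\nabla^h_{X_+}=0,\qquad \nabla^h_{X_-}Y_+=\varphi(X_-)Y_+,\qquad \nabla^h_{X_-}Y_-=\nabla^-_{X_-}Y_-,
\end{equation*}
and verify $R^h=0$ case by case; your case analysis is sound (the cases with at most one argument in $\frg_-$ die because $\nabla^h_{X_+}=0$ and $[\frg_+,\frg_-]\subseteq\frg_+$, and the remaining case reduces to $[\varphi(X_-),\varphi(Y_-)]-\varphi([X_-,Y_-]_-)=0$ on $\frg_+$ and to $R^-=0$ on $\frg_-$). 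The trade-off: your computation is elementary and self-contained, and it makes explicit which hypothesis kills which curvature component --- in effect you reprove, in this special case, the transfer result the paper cites --- whereas the paper's argument is shorter and rests on a general principle (curvature properties of pseudo-Riemannian Lie algebras pass across linear isometries in this setting) without ever writing down $\nabla^h$.
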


\begin{proof}
    This is a direct consequence of Theorem~\ref{thm:integrable_Born_semidirect_product} by noticing that such $\varphi(X_-)$ is $h_+$-antisymmetric for all $X_-\in\frg_-$ and $\varphi$ is a representation since $\nabla^-$ is flat. The metric $h=h_+\oplus h_-$ is flat by the Azencott-Wilson theorem (see \cite[Proposition~1.13]{ContiGil_23}) since in this case the pseudo-Riemannian Lie algebra $(\frg_+\rtimes_\varphi\frg_-,h)$ is linearly isometric to the direct sum Lie algebra $(\frg_+\oplus\frg_-,h)$, which is flat.
\end{proof}

Using a similar strategy, we may also construct many \emph{pseudo-hyperkähler} Lie algebras:

\begin{definition}
    A \emph{pseudo-hyperkähler structure} $(h,J,I)$ on a Lie algebra $\frg$ is a pair of anti-commuting complex structures $J$ and $I$ that are both Kähler for a pseudo-Riemannian metric $h$.
\end{definition}

In order to construct pseudo-hyperkähler Lie algebras, we start with an even-dimensional flat pseudo-Kähler Lie algebra:

\begin{proposition}
\label{prop: hyperkähler from flat Kähler}
    Let $(\frg_-,h_-,I_-)$ be a flat pseudo-Kähler Lie algebra of dimension $2n$. Let $\frg_+=\R^{2n}$, $h_+=Q^*h_-$, $I_+=-Q^{-1}I_-Q$, where $Q\colon\frg_+\longrightarrow \frg_-$ is an isomorphism of vector spaces, and define the representation $\varphi\colon\frg_-\longrightarrow\Der(\frg_+)$ by $\varphi(X_-)=Q^{-1}\nabla^-_{X_-}Q$. Set
    \begin{equation*}
h=\begin{pmatrix}
    h_+&0\\
    0&h_-
\end{pmatrix},\quad J=\begin{pmatrix}
    0&-Q^{-1}\\
    Q&0
\end{pmatrix},\quad I=\begin{pmatrix}
    I_+&0\\
    0&I_-
\end{pmatrix}.
\end{equation*}
    Then $(h,J,I)$ is a pseudo-hyperkähler structure on $\frg_+\rtimes_\varphi\frg_-$.
\end{proposition}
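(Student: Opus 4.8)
The plan is to leverage the machinery already in place. Since $(\frg_-,h_-,I_-)$ is pseudo-Kähler it is in particular flat pseudo-Riemannian, so Corollary~\ref{cor:Born_from_flat} applies verbatim to the data $(\frg_\pm,h_\pm,Q,\varphi)$ and yields an \emph{integrable} Born structure $(h,J,\frg_{\pm})$ on $\frg_+\rtimes_\varphi\frg_-$ with $h=h_+\oplus h_-$ flat. By Corollary~\ref{cor:integrable_Born_Hermitian_version} this means $(h,J)$ is already a pseudo-Kähler structure: $J^2=-\operatorname{Id}$, $h(J\cdot,J\cdot)=h(\cdot,\cdot)$ and $\ch J=0$. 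Thus one half of the pseudo-hyperkähler condition, the one concerning $J$, comes for free, and the task reduces to establishing the analogous statements for $I$ together with the anticommutation $IJ+JI=0$.

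First I would dispatch the purely algebraic relations. From $I_-^2=-\operatorname{Id}$ and $I_+=-Q^{-1}I_-Q$ one gets $I_+^2=Q^{-1}I_-^2Q=-\operatorname{Id}$, hence $I^2=-\operatorname{Id}$. The conjugation relation is most usefully recorded as $QI_+=-I_-Q$; using it together with the block form of $J$, a direct $2\times 2$ computation gives $IJ=-JI$. For $h$-compatibility, $I$ preserves the splitting, so it suffices to check each block: on $\frg_-$ compatibility is the pseudo-Kähler condition for $(h_-,I_-)$, and on $\frg_+$ one computes $h_+(I_+X_+,I_+Y_+)=h_-(QI_+X_+,QI_+Y_+)=h_-(I_-QX_+,I_-QY_+)=h_-(QX_+,QY_+)=h_+(X_+,Y_+)$, again using $QI_+=-I_-Q$ and $h_+=Q^*h_-$.

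The heart of the argument, and the step I expect to be the main obstacle, is the parallelism $\ch I=0$. Here I would first specialize Lemma~\ref{lemma:LeviCivita} to the present semidirect product. Since $\frg_+=\R^{2n}$ is abelian we have $\nabla^+=0$; since $\rho\equiv 0$ and $\varphi(X_-)=Q^{-1}\nabla^-_{X_-}Q$ is $h_+$-antisymmetric (so $\varphi(X_-)^s=0$, as noted in the proof of Corollary~\ref{cor:Born_from_flat}), the four formulas collapse to
\begin{align*}
&\ch_{X_+}Y_+=0,\quad \ch_{X_+}Y_-=0,\\
&\ch_{X_-}Y_+=Q^{-1}\nabla^-_{X_-}QY_+,\quad \ch_{X_-}Y_-=\nabla^-_{X_-}Y_-.
\end{align*}
Evaluating $(\ch_XI)Y=\ch_X(IY)-I(\ch_XY)$ on the four combinations of $X,Y\in\frg_{\pm}$, the two cases with $X\in\frg_+$ vanish trivially because $\ch_{X_+}$ annihilates everything. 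In the case $X=X_-$, $Y=Y_-$ the expression equals $(\nabla^-_{X_-}I_-)Y_-$, which vanishes by the Kähler condition $\nabla^-I_-=0$. The remaining case $X=X_-$, $Y=Y_+$ is where the conjugation relation does the work: substituting $QI_+=-I_-Q$ and $I_+Q^{-1}=-Q^{-1}I_-$ turns $(\ch_{X_-}I)Y_+$ into $-Q^{-1}(\nabla^-_{X_-}I_--I_-\nabla^-_{X_-})QY_+=-Q^{-1}(\nabla^-_{X_-}I_-)QY_+$, which again vanishes since $(\frg_-,h_-,I_-)$ is pseudo-Kähler.

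Finally I would assemble the pieces. Because $\ch$ is torsion-free and $I$ is an $h$-compatible almost complex structure with $\ch I=0$, the operator $I$ is integrable and $(h,I)$ is pseudo-Kähler. Together with the pseudo-Kähler structure $(h,J)$ from the first paragraph and the anticommutation $IJ+JI=0$, this shows that $J$ and $I$ are anti-commuting complex structures both Kähler for $h$, i.e.~$(h,J,I)$ is a pseudo-hyperkähler structure on $\frg_+\rtimes_\varphi\frg_-$, as claimed.
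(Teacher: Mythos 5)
Your proof is correct, and its skeleton matches the paper's: both establish the algebraic pseudo-hyperhermitian relations, obtain $\nabla^h J=0$ from the semidirect-product machinery (you via Corollary~\ref{cor:Born_from_flat}, the paper directly via Theorem~\ref{thm:integrable_Born_semidirect_product}), and then reduce everything to $\nabla^h I=0$. The one genuine difference is how that last step is handled. The paper does not compute: it invokes \cite[Theorem~1.3]{ContiGil_23}, which states that for this class of semidirect products $\nabla^h I=0$ holds if and only if $I_+\circ\varphi(X_-)=\varphi(X_-)\circ I_+$ for all $X_-$, and then observes that this commutation is automatic because $\nabla^-I_-=0$. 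You instead specialize Lemma~\ref{lemma:LeviCivita} (using $\nabla^+=0$, $\rho\equiv0$, $\varphi(X_-)^s=0$) and check the four cases of $(\nabla^h_XI)Y$ by hand; note that your only nontrivial case, $X=X_-$, $Y=Y_+$, where $(\nabla^h_{X_-}I)Y_+=\varphi(X_-)I_+Y_+-I_+\varphi(X_-)Y_+$, is precisely a verification of the commutation condition that the cited theorem isolates, and your conjugation argument via $QI_+=-I_-Q$ is the same mechanism by which the paper deduces it from $\nabla^-I_-=0$. What each approach buys: the paper's proof is shorter and situates the result within a known general criterion, while yours is self-contained within the paper's own toolkit and makes transparent that the commutation of $I_+$ with the $\varphi(X_-)$ is the only condition that is not trivially satisfied.
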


\begin{proof}
    Let $\frg_{\pm}$, $\varphi$, $h$, $I$ and $J$ as in Proposition~\ref{prop: hyperkähler from flat Kähler}. Then it is easy to check that $(h,J,I)$ is an almost pseudo-hyperhermitian structure on $\frg_+\rtimes_\varphi\frg_-$. We know by Theorem~\ref{thm:integrable_Born_semidirect_product} that with the above choices, $\nabla^hJ=0$. Moreover, by \cite[Theorem~1.3]{ContiGil_23} we know that $\nabla^hI=0$ for such $\varphi$ if and only if $I_+\circ\varphi(X_-)=\varphi(X_-)\circ I_+$. However, this condition is satisfied automatically since $\nabla^-I_-=0$.
\end{proof}

Similarly, we can construct many examples of hypersymplectic Lie algebras:

\begin{definition}
    A \emph{hypersymplectic structure} on a Lie algebra $\frg$ is a triple $(h,J,E)$ such that $(h,J)$ is pseudo-Kähler, $(h,E)$ is bi-Lagrangian and $J$ and $E$ anti-commute.
\end{definition}

\begin{proposition}
\label{prop: HS from flat biLag}
    Let $(\frg_-,h_-,E_-)$ be a flat bi-Lagrangian Lie algebra of dimension $2n$. Let $\frg_+=\R^{2n}$, $h_+=Q^*h_-$, $E_+=-Q^{-1}E_-Q$, where $Q\colon\frg_+\longrightarrow \frg_-$ is an isomorphism of vector spaces, and define the representation $\varphi\colon\frg_-\longrightarrow\Der(\frg_+)$ by $\varphi(X_-)=Q^{-1}\nabla^-_{X_-}Q$. Set \begin{equation*}
h=\begin{pmatrix}
    h_+&0\\
    0&h_-
\end{pmatrix},\quad J=\begin{pmatrix}
    0&-Q^{-1}\\
    Q&0
\end{pmatrix},\quad E=\begin{pmatrix}
    E_+&0\\
    0&E_-
\end{pmatrix}.
\end{equation*}
    Then $(h,J,E)$ is a hypersymplectic structure on $\frg_+\rtimes_\varphi\frg_-$.
\end{proposition}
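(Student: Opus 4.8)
The plan is to mirror the structure of the proof of Proposition~\ref{prop: hyperkähler from flat Kähler} almost verbatim, replacing the second complex structure $I$ by the product structure $E$ and the Kähler condition on $I_-$ by the bi-Lagrangian (para-Kähler) condition on $E_-$. The strategy breaks into three parts: first verify the purely algebraic compatibilities so that $(h,J,E)$ is an almost hypersymplectic structure; second invoke Theorem~\ref{thm:integrable_Born_semidirect_product} to get that $(h,J)$ is pseudo-Kähler (i.e.~$\nabla^hJ=0$); third show that $(h,E)$ is bi-Lagrangian, i.e.~$\nabla^hE=0$.

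First I would check the algebra. By definition $E_+^2=\Id$ and $E_-^2=\Id$, and since $E_+=-Q^{-1}E_-Q$ we get $E_+^2=Q^{-1}E_-^2Q=\Id$, so $E^2=\Id$. The anti-commutation $JE=-EJ$ is a block computation: with $J$ and $E$ as given, one checks that the off-diagonal blocks $-Q^{-1}E_-$ and $QE_+=-E_-Q$ satisfy the required sign, exactly as in the hyperkähler case with $I$ replaced by $E$. The compatibility $h(E\cdot,E\cdot)=-h(\cdot,\cdot)$ follows from $h_-(E_-\cdot,E_-\cdot)=-h_-(\cdot,\cdot)$ on $\frg_-$ together with $h_+=Q^*h_-$ and $E_+=-Q^{-1}E_-Q$, which transports the para-Hermitian relation from $\frg_-$ to $\frg_+$. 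These checks are routine and I would state them as ``it is easy to check that $(h,J,E)$ is an almost hypersymplectic structure on $\frg_+\rtimes_\varphi\frg_-$.''

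Second, since $\varphi(X_-)=Q^{-1}\nabla^-_{X_-}Q$ is precisely the representation prescribed in Corollary~\ref{cor:Born_from_flat}, and this $\varphi$ is $h_+$-antisymmetric with $\frg_+=\R^{2n}$ abelian, the hypotheses of Theorem~\ref{thm:integrable_Born_semidirect_product} are met, giving $\nabla^hJ=0$. The main work, and the step I expect to be the only genuine obstacle, is the parallelism $\nabla^hE=0$. Here I would appeal to the same mechanism used for $I$: by the analogue of \cite[Theorem~1.3]{ContiGil_23} for this semidirect-product connection, for the chosen $\varphi$ one has $\nabla^hE=0$ if and only if $E_+\circ\varphi(X_-)=\varphi(X_-)\circ E_+$ for all $X_-\in\frg_-$. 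The subtlety is that the cited theorem is phrased for a complex structure, so I must argue that its proof only uses that $E$ is block-diagonal with $\nabla^-E_-=0$, not that $E^2=-\Id$; the bi-Lagrangian hypothesis $\nabla^-E_-=0$ supplies exactly the parallelism needed. Granting this, the commutation $E_+\circ\varphi(X_-)=\varphi(X_-)\circ E_+$ holds automatically: since $\varphi(X_-)=Q^{-1}\nabla^-_{X_-}Q$ and $E_+=-Q^{-1}E_-Q$, conjugating by $Q$ reduces the identity to $E_-\circ\nabla^-_{X_-}=\nabla^-_{X_-}\circ E_-$, which is immediate from $\nabla^-E_-=0$. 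Therefore $\nabla^hE=0$, so $(h,E)$ is bi-Lagrangian, and together with $\nabla^hJ=0$ and the anti-commutation this exhibits $(h,J,E)$ as a hypersymplectic structure on $\frg_+\rtimes_\varphi\frg_-$, completing the argument.
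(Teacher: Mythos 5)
Your proof is correct and is exactly the argument the paper intends: the paper states this proposition without any proof, leaving it as the evident analogue of Proposition~\ref{prop: hyperkähler from flat Kähler}, whose proof you mirror step by step (algebraic compatibilities, $\nabla^hJ=0$ via Theorem~\ref{thm:integrable_Born_semidirect_product}, and parallelism of the second structure via the commutation condition $E_+\circ\varphi(X_-)=\varphi(X_-)\circ E_+$ supplied by $\nabla^-E_-=0$). The one worry you flag yourself---whether \cite[Theorem~1.3]{ContiGil_23} applies to a product structure $E$ rather than a complex structure---can be sidestepped entirely using the paper's own Lemma~\ref{lemma:LeviCivita}: with $\rho\equiv0$, $\frg_+$ abelian and $\varphi(X_-)$ $h_+$-antisymmetric, that lemma gives $\nabla^h_{X_+}\equiv0$, $\nabla^h_{X_-}Y_+=\varphi(X_-)Y_+$ and $\nabla^h_{X_-}Y_-=\nabla^-_{X_-}Y_-$, so $\nabla^hE=0$ reduces directly to $[E_+,\varphi(X_-)]=0$ together with $\nabla^-E_-=0$, which is precisely the commutation argument you give.
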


\begin{remark}
    The definition of hypersymplectic structures that we use here is equivalent to the one presented in~\cite{Hypersymplectic_Andrada_06}, which is in turn equivalent to the definition originally introduced by Hitchin in~\cite{Hit90}. Another equivalent definition of hypersymplectic structures in terms of three symplectic forms and their recursion operators was discussed in~\cite{BK08}.
\end{remark}


\section{Born Lie algebras in low dimensions}\label{sec:classification_low_dim}

In this section, we classify all Lie algebras up to dimension four and all nilpotent Lie algebras in dimension six admitting an integrable Born structure.

\subsection{Two-dimensional classification}

The only two-dimensional Lie algebras are $$\R^2=\escal{e_1,e_2}\quad\text{and}\quad\mathfrak{r}_2=\mathfrak{aff}(\R)=\escal{e_1,e_2\mid[e_1,e_2]=e_2}.$$

By Example~\ref{ex: abelian is born}, the abelian Lie algebra $\R^2$ is Born. The Lie algebra $\mathfrak{r}_2$ admits the following Born structure: $$h=e^1\otimes e^1+e^2\otimes e^2,\quad Je_1=e_2,\quad \mathfrak{g}_+=\langle e_1\rangle,\quad \mathfrak{g}_-=\langle e_2\rangle.$$  
Note that in two dimensions, any Born structure is automatically integrable. We conclude that every two-dimensional Lie algebra is Born.

\subsection{Four-dimensional classification}

In this section, we classify all four-dimensional Lie algebras admitting an integrable Born structure. For this, we make use of the existing classifications for pseudo-Kähler and bi-Lagrangian Lie algebras in the literature: The four-dimensional Lie algebras admitting a pseudo-Kähler structure were classified in \cite{Pseudo-Kahler_4dim_06}. A classification of four-dimensional bi-Lagrangian Lie algebras is given in \cite{calvaruso_para-Kahler_15}. Note that integrable Born structures on the nilpotent Lie algebra $\mathfrak{rh}_3$ were already studied in~\cite[Theorem~40]{HKP24} by making use of the hypersymplectic structure on $\mathfrak{rh}_3$.

\begin{theorem}\label{thm:classification_Born_dim4}
    Let $\frg$ be a non-abelian four-dimensional Lie algebra. Then $\frg$ possesses an integrable Born structure $(h,J,\frg_\pm)$ if and only if $\frg$ is isomorphic to one of the following Lie algebras: $$\mathfrak{rh}_3,\,\mathfrak{rr}_{3,0},\,\mathfrak{r}_2\mathfrak{r}_2,\,\mathfrak{r}'_2,\,\mathfrak{r}_{4,-1,-1},\,\mathfrak{d}_{4,1},\,\mathfrak{d}_{4,2},\,\mathfrak{d}_{4,1/2}.$$
\end{theorem}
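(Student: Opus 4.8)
The plan is to prove the classification by exploiting the characterization from Corollary~\ref{cor:integrable_Born_Hermitian_version}: an integrable Born structure on $\frg$ is exactly a pseudo-K\"ahler structure $(h,J)$ together with a pair of complementary $h$-orthogonal subalgebras $\frg_\pm$ interchanged by $J$. In particular, the existence of an integrable Born structure forces $\frg$ to admit simultaneously a pseudo-K\"ahler structure and a bi-Lagrangian structure, since the data $(\omega,\frg_+,\frg_-)$ is bi-Lagrangian. I would therefore begin by invoking the two existing classifications: the four-dimensional pseudo-K\"ahler Lie algebras from \cite{Pseudo-Kahler_4dim_06} and the four-dimensional bi-Lagrangian Lie algebras from \cite{calvaruso_para-Kahler_15}. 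Intersecting these two lists of isomorphism classes gives the set of Lie algebras that are candidates to be Born; the eight algebras listed in the theorem should emerge precisely as this intersection, modulo the caveat that admitting both structures \emph{separately} is necessary but not obviously sufficient for the two to be assembled into a single compatible Born triple.

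Once the candidate list is pinned down, the proof splits into the two implications. For the ``only if'' direction, the argument is the intersection of classifications just described: if $\frg$ is Born it is in particular pseudo-K\"ahler and bi-Lagrangian, so it must appear on both lists, hence it is isomorphic to one of the eight named algebras. For the ``if'' direction, I would proceed algebra by algebra and exhibit an explicit integrable Born structure on each, specified as a tuple $(h,J,\frg_\pm)$ in a chosen basis satisfying the Chevalley-Eilenberg relations. For each candidate I must write down $J$ (with $J^2=-\Id$ and vanishing Nijenhuis tensor), the complementary subalgebras $\frg_\pm=\ker(A\mp\Id)$, and the metric $h$, then verify the three compatibility conditions of Corollary~\ref{cor:integrable_Born_Hermitian_version}: that $\frg_\pm$ are subalgebras, that they are $h$-orthogonal, and that $J\frg_+=\frg_-$. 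Alternatively, and often more efficiently, I could realize each structure via the bicross (or semidirect) product machinery of Section~\ref{sec:bicross_products}, checking the conditions of Theorem~\ref{thm:bicross_Born} or Theorem~\ref{thm:integrable_Born_semidirect_product}; for the nilpotent case $\mathfrak{rh}_3$ the hypersymplectic structure from \cite[Theorem~40]{HKP24} already supplies a Born structure.

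The main obstacle I anticipate is the sufficiency direction, specifically the logical gap between ``$\frg$ admits a pseudo-K\"ahler structure and (possibly a different) bi-Lagrangian structure'' and ``$\frg$ admits a single Born structure.'' The two classifications are compiled independently, and the pseudo-K\"ahler metric from one source need not be the neutral metric underlying the bi-Lagrangian structure from the other; a Born structure requires one metric $h$ that is simultaneously pseudo-K\"ahler for $J$ and whose associated $(g,\omega)$ realizes the bi-Lagrangian data, with $J$ interchanging $\frg_\pm$. Thus the intersection of the two lists only produces a list of \emph{candidates}, and each must be confirmed by an explicit construction rather than by a purely list-theoretic argument. A secondary subtlety is bookkeeping under isomorphism: the classifications use particular normal forms and naming conventions (e.g.\ $\mathfrak{r}_{4,-1,-1}$, $\mathfrak{d}_{4,1}$, $\mathfrak{d}_{4,1/2}$), so I must be careful to match representatives across the two sources correctly, ruling out the algebras that admit both structures yet fail to support a compatible simultaneous triple, and confirming that exactly the eight listed algebras survive. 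The verification that each exhibited $(h,J,\frg_\pm)$ genuinely satisfies all conditions is routine linear algebra once the structures are written down, so the real work is the case-by-case construction together with careful cross-referencing of the classification tables.
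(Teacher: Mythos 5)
Your proposal is correct and follows essentially the same route as the paper: the paper likewise obtains the candidate list by intersecting the pseudo-K\"ahler classification of \cite{Pseudo-Kahler_4dim_06} with the bi-Lagrangian classification of \cite{calvaruso_para-Kahler_15}, and then settles sufficiency by exhibiting an explicit integrable Born structure $(h,J,\frg_\pm)$ on each of the eight algebras (Table~\ref{tab: 4d born}). The gap you flag -- that admitting the two structures separately is only necessary -- is exactly the point the paper addresses with its case-by-case table of explicit structures.
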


\begin{proof}
    Comparing the classifications in \cite{Pseudo-Kahler_4dim_06} and \cite{calvaruso_para-Kahler_15}, we find that the non-abelian four-dimensional Lie algebras admitting both pseudo-Kähler and bi-Lagrangian structures are precisely those listed in Theorem~\ref{thm:classification_Born_dim4}. Each of them is in fact Born: In Table~\ref{tab: 4d born} we provide an example of an integrable Born structure for every Lie algebra in the list. We use the convention $e^i\odot e^j:=e^i\otimes e^j+e^j\otimes e^i$.
\end{proof}

\begin{table}[H]
\begin{center}

\begin{tabular}{|m{0.07\linewidth}|m{0.15\linewidth}|m{0.16\linewidth}|m{0.19\linewidth}|m{0.15\linewidth}|}\hline\smallskip
     & \textbf{Bracket} & \textbf{Metric $h$}& \textbf{Complex} & \textbf{Lagrangian}\\ &\textbf{relations}& &\textbf{structure $J$}& \textbf{subalgebras} \\
     \hline\smallskip
    $\mathfrak{rh}_3$ & $[e_1,e_2]=e_3$ & $e^1\odot e^3+e^2\odot e^4$ & $Je_1=e_2$, $Je_3=e_4$ & $\langle e_1,e_3\rangle$, $\langle e_2,e_4\rangle$\\
    \hline
    $\mathfrak{rr}_{3,0}$& $[e_1,e_2]=e_2$& $\sum_{i=1}^4e^i\otimes e^i$ & $Je_1=e_2$, $Je_3=e_4$ & $\langle e_1,e_3\rangle$, $\langle e_2,e_4\rangle$\\
    \hline
    $\mathfrak{r}_2\mathfrak{r}_2$ & $[e_1,e_2]=e_2$,& & & \\ & $[e_3,e_4]=e_4$ & $\sum_{i=1}^4e^i\otimes e^i$ & $Je_1=e_2$, $Je_3=e_4$ & $\langle e_1,e_3\rangle$, $\langle e_2,e_4\rangle$\\
    \hline
    $\mathfrak{r}_2'$ & $[e_1,e_3]=e_3$,& & & \\ & $[e_1,e_4]=e_4$, & & & \\ &$[e_2,e_3]=e_4$, & $e^1\otimes e^1-e^2\otimes e^2$& & \\ &$[e_2,e_4]=-e_3$& $+e^3\otimes e^3-e^4\otimes e^4$ & $Je_1=e_3$, $Je_2=e_4$ & $\langle e_1,e_2\rangle$, $\langle e_3,e_4\rangle$\\
    \hline
    $\mathfrak{r}_{4,-1,-1}$ & $[e_1,e_4]=-e_1$,& & & \\ & $[e_2,e_4]=e_2$,& & & \\ & $[e_3,e_4]=e_3$ & $-e^1\odot e^3-e^2\odot e^4$ & $Je_1=-e_4$, $Je_2=e_3$ & $\langle e_1,e_3\rangle$, $\langle e_2,e_4\rangle$\\
    \hline
    $\mathfrak{d}_{4,1}$ & $[e_1,e_2]=e_3$,& & & \\ & $[e_1,e_4]=-e_1$,& & & \\ & $[e_3,e_4]=-e_3$ & $-e^1\odot e^3+e^2\odot e^4$ & $Je_1=e_4$, $Je_2=e_3$ & $\langle e_1,e_3\rangle$, $\langle e_2,e_4\rangle$\\
    \hline
    $\mathfrak{d}_{4,2}$ & $[e_1,e_2]=e_3$,& & & \\ & $[e_1,e_4]=-2e_1$,& & & \\ & $[e_2,e_4]=e_2$,& $\tfrac{1}{2}e^1\otimes e^1+e^2\otimes e^2$& & \\ & $[e_3,e_4]=-e_3$ & $+e^3\otimes e^3+2e^4\otimes e^4$ & $Je_1=\tfrac{1}{2}e_4$, $Je_2=e_3$ & $\langle e_1,e_3\rangle$, $\langle e_2,e_4\rangle$\\
    \hline
    $\mathfrak{d}_{4,1/2}$ & $[e_1,e_2]=e_3$,& & & \\ & $[e_1,e_4]=-\tfrac{1}{2}e_1$,& & & \\ & $[e_2,e_4]=-\tfrac{1}{2}e_2$,& & & \\ & $[e_3,e_4]=-e_3$ & $\sum_{i=1}^4e^i\otimes e^i$ & $Je_1=e_2$, $Je_3=-e_4$ & $\langle e_1,e_3\rangle$, $\langle e_2,e_4\rangle$\\
    \hline
    \end{tabular}
    \end{center}
    \caption{List of four-dimensional Lie algebras admitting an integrable Born structure.}
    \label{tab: 4d born}
\end{table}

\begin{corollary}
    A four-dimensional Lie algebra admits an integrable Born structure if and only if it admits both pseudo-Kähler and bi-Lagrangian structures.
\end{corollary}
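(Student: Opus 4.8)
The plan is to establish the two implications separately, noting that only the backward one is specific to dimension four. For the forward (``only if'') direction I would argue in arbitrary dimension. If $\frg$ carries an integrable Born structure $(g,h,\omega)$, then $(h,J)$ is by construction a pseudo-Kähler structure: we have $J^2=-\Id$, the relation $h(JX,JY)=h(X,Y)$ from Table~\ref{tab:comp_bil_forms}, and integrability forces $\ch J=0$. At the same time, by Corollary~\ref{cor: L pm Lagrangian} the eigenspaces $\frg_{\pm}$ of $A$ are complementary Lagrangian subspaces for $\omega$, and integrability of the Born structure makes them subalgebras with $\d\omega=0$; hence $(\omega,\frg_+,\frg_-)$ is a bi-Lagrangian structure in the sense of Definition~\ref{def: bi-lagrangian}. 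Thus $\frg$ admits both a pseudo-Kähler and a bi-Lagrangian structure.

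For the backward (``if'') direction, which is the substantive one, I would split into the abelian and non-abelian cases. If $\frg$ is abelian, i.e.\ $\frg\cong\R^4$, then it is Born by Example~\ref{ex: abelian is born}, so there is nothing to prove. If $\frg$ is non-abelian and admits both a pseudo-Kähler and a bi-Lagrangian structure, then $\frg$ appears simultaneously in the classification of four-dimensional pseudo-Kähler Lie algebras and in that of four-dimensional bi-Lagrangian Lie algebras. This is exactly the comparison carried out in the proof of Theorem~\ref{thm:classification_Born_dim4}, which identifies the intersection of the two lists as the eight Lie algebras listed there, each exhibited as Born via the explicit data in Table~\ref{tab: 4d born}. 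Hence $\frg$ admits an integrable Born structure, and the corollary becomes a direct rephrasing of Theorem~\ref{thm:classification_Born_dim4} together with the abelian case.

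The point that requires care---and the reason this is a genuine statement rather than a triviality---is that the coexistence of a pseudo-Kähler structure $(h,J)$ and a bi-Lagrangian structure $(\omega,\frg_+,\frg_-)$ on a Lie algebra does not by itself produce a Born structure. By Corollary~\ref{cor:integrable_Born_Hermitian_version}, an integrable Born structure amounts to a pseudo-Kähler structure \emph{together with} a pair of complementary subalgebras that are in addition $h$-orthogonal and interchanged by $J$, so the two given structures must be chosen compatibly. What makes the equivalence hold in dimension four is therefore not an abstract compatibility argument but the explicit case-by-case construction of Table~\ref{tab: 4d born}: for each Lie algebra in the intersection of the two classifications one verifies that the pseudo-Kähler and bi-Lagrangian data can be aligned into a single Born diagram. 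I expect the only real work to be this verification, which is already subsumed in Theorem~\ref{thm:classification_Born_dim4}.
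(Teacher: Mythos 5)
Your proposal is correct and follows essentially the same route as the paper: the corollary is stated there without a separate proof precisely because it is the combination of the easy general fact that an integrable Born structure yields both a pseudo-Kähler structure $(h,J)$ and a bi-Lagrangian structure $(\omega,\frg_+,\frg_-)$, the abelian case from Example~\ref{ex: abelian is born}, and the classification comparison plus explicit constructions of Table~\ref{tab: 4d born} carried out in the proof of Theorem~\ref{thm:classification_Born_dim4}. Your closing observation---that the content lies in the case-by-case alignment of the two structures rather than in any abstract compatibility argument---accurately identifies why the statement is dimension-specific and is exactly the point the paper's theorem settles.
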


We note that the only unimodular non-abelian four-dimensional Born Lie algebra is $\mathfrak{rh}_3$, which is nilpotent.

\subsection{Six-dimensional nilpotent classification}

In this section we apply the bicross product construction introduced in Section~\ref{sec:bicross_products} in order to obtain a classification of all six-dimensional nilpotent Lie algebras admitting an integrable Born structure. Our aim is to prove the following:

\begin{theorem}
\label{thm: 6d class nilpotent}
    Let $\frg$ be a non-abelian six-dimensional nilpotent Lie algebra. Then $\frg$ possesses an integrable Born structure $(h,J,\frg_\pm)$ if and only if $\frg$ is isomorphic to one of the following Lie algebras:
    $$\begin{aligned}
    \mathfrak{h}_4 &= (0,0,0,0,12,14+23),\\
    \mathfrak{h}_7 &= (0,0,0,12,13,23),\\
    \mathfrak{h}_8 &= (0,0,0,0,0,12),\\
    \mathfrak{h}_9 &= (0,0,0,0,12,14+25),
    \end{aligned}\qquad\begin{aligned}
    \mathfrak{h}_{10} &= (0,0,0,12,13,14),\\
    \mathfrak{h}_{11} &= (0,0,0,12,13,14+23),\\
    \mathfrak{h}_{13}&=(0,0,0,12,13+14,24).
\end{aligned}$$
\end{theorem}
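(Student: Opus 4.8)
The plan is to combine the structural reduction results stated in the introduction (Lemmas \ref{lemma:heis3-to-R3}, \ref{lemma:heis_only_flat}, \ref{lemma:center>=3} and Propositions \ref{prop:Born_R3+heis3}, \ref{prop:Born_R3+R3}) with the known classification of six-dimensional nilpotent pseudo-Kähler Lie algebras from \cite{PK_6dim_nil_04} and of six-dimensional nilpotent bi-Lagrangian Lie algebras from \cite{Bi-Lagrangian_Ham_19}. The two directions of the equivalence are asymmetric in difficulty, so I would treat them separately.

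\textbf{Necessity.} Suppose $\frg$ is a non-abelian six-dimensional nilpotent Lie algebra admitting an integrable Born structure $(h,J,\frg_\pm)$. By Corollary \ref{cor:integrable_Born_Hermitian_version} the pair $(h,J)$ is pseudo-Kähler and $(\omega,\frg_+,\frg_-)$ is bi-Lagrangian, so $\frg$ must appear in \emph{both} the list of \cite{PK_6dim_nil_04} and the list of \cite{Bi-Lagrangian_Ham_19}. This already cuts the $34$ isomorphism classes of six-dimensional nilpotent Lie algebras down to a short list. By Proposition \ref{prop:every_Born_is_bicross}, writing $\frg\cong\frg_+\bowtie_\varphi^\rho\frg_-$ with $\frg_\pm$ three-dimensional nilpotent subalgebras interchanged by $J$, the only three-dimensional nilpotent Lie algebras are $\R^3$ and $\mathfrak{heis}_3$. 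Lemma \ref{lemma:heis3-to-R3} reduces the possibilities for the pair $(\frg_+,\frg_-)$ to $\R^3\bowtie\mathfrak{heis}_3$ and $\R^3\bowtie\R^3$, and the remaining necessary conditions are supplied by Lemma \ref{lemma:heis_only_flat} (forcing flat metrics in the Heisenberg case) and Lemma \ref{lemma:center>=3} (a dimension-of-center condition in the $\R^3\bowtie\R^3$ case). Intersecting all of these constraints leaves exactly the seven algebras $\mathfrak{h}_4,\mathfrak{h}_7,\mathfrak{h}_8,\mathfrak{h}_9,\mathfrak{h}_{10},\mathfrak{h}_{11},\mathfrak{h}_{13}$.

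\textbf{Sufficiency.} For the converse I would exhibit, for each of the seven algebras, an explicit integrable Born structure, realizing each as a bicross product via Theorem \ref{thm:bicross_Born} (or Theorem \ref{thm:integrable_Born_semidirect_product} when one representation vanishes). Concretely, those arising as $\R^3\bowtie\mathfrak{heis}_3$ are produced in Proposition \ref{prop:Born_R3+heis3} and those arising as $\R^3\bowtie\R^3$ in Proposition \ref{prop:Born_R3+R3}; combining these two propositions covers all seven and closes the sufficiency direction. Each structure must be checked against Corollary \ref{cor:integrable_Born_Hermitian_version}: the subalgebra property of $\frg_\pm$, orthogonality under $h$, the interchange $J\frg_+=\frg_-$, and $\nabla^h J=0$, the last of which is automatic once the data satisfy the conditions of Theorem \ref{thm:bicross_Born}.

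The main obstacle is the necessity direction, specifically verifying that no algebra surviving the comparison of the two published classifications \emph{other} than the seven listed can carry a Born structure. Merely lying in both classification lists is necessary but not obviously sufficient, since a single Lie algebra may admit pseudo-Kähler and bi-Lagrangian structures that cannot be chosen compatibly (that is, with $J$ interchanging the two Lagrangian subalgebras while preserving $h$). This is exactly where the bicross-product analysis does the real work: Lemmas \ref{lemma:heis3-to-R3}, \ref{lemma:heis_only_flat} and \ref{lemma:center>=3} translate the compatibility requirement into concrete algebraic constraints on $\varphi$, $\rho$, $Q$ and the center, and ruling out the borderline candidates requires showing these constraints are genuinely unsatisfiable for them. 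I expect this exclusion step to be the most delicate part of the argument, as it must rely on the precise structure imposed by Theorem \ref{thm:bicross_Born} rather than on the classification lists alone.
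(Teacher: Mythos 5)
Your proposal is correct and follows essentially the same route as the paper: the paper's proof of Theorem~\ref{thm: 6d class nilpotent} is precisely the combination of Proposition~\ref{prop:Born_R3+heis3} and Proposition~\ref{prop:Born_R3+R3}, with Lemma~\ref{lemma:heis3-to-R3} justifying that these two cases exhaust all possibilities, and with Lemma~\ref{lemma:heis_only_flat}, Lemma~\ref{lemma:center>=3} and the candidate list of Proposition~\ref{prop: 6d pK and biL and CP} doing the exclusion work you correctly identify as the delicate part. Your only slight deviation is that the paper narrows the candidates by intersecting \emph{three} classifications (pseudo-K\"ahler, bi-Lagrangian and complex product) rather than two, but since your argument defers the final exclusions to the same two propositions, this makes no substantive difference.
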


Here we make use of Salamon's notation \cite{Salamon:ComplexStructures} to denote Lie algebras. For instance $\frg=(0,0,0,12,13,14)$ means that there is a basis $\{e_1,\ldots,e_6\}$ of $\frg$ with dual basis $\{e^1,\ldots,e^6\}$ such that $\d e^1=\d e^2=\d e^3=0$, $\d e^4=e^{12}$, $\d e^5=e^{13}$ and $\d e^6=e^{14}$, where $e^{ij}:=e^i\wedge e^j$.

\begin{remark}
    The fact that the Lie algebras $\mathfrak{h}_4$, $\mathfrak{h}_8$ and $\mathfrak{h}_9$ in Theorem~\ref{thm: 6d class nilpotent} admit an integrable Born structure was already proved in \cite{HKP24} by direct computation, that is by providing explicitly such structure for each Lie algebra.
\end{remark}

We start listing the possible candidates for six-dimensional nilpotent Born Lie algebras.

\begin{proposition}[\cite{PK_6dim_nil_04,Bi-Lagrangian_Ham_19,complex_product_dim6_08}]
\label{prop: 6d pK and biL and CP}
    Let $\frg$ be a non-abelian six-dimensional nilpotent Lie algebra. Then $\frg$ admits a pseudo-Kähler, a bi-Lagrangian and a complex product structure if and only if $\frg$ is isomorphic to one of the following:
    $$\begin{aligned}
    \mathfrak{h}_{2}&=(0,0,0,0,12,34), \\
    \mathfrak{h}_{4}&=(0,0,0,0,12,14+23), \\
    \mathfrak{h}_{5}&=(0,0,0,0,13+42,14+23), \\
    \mathfrak{h}_{6}&=(0,0,0,0,12,13), \\
    \mathfrak{h}_{7}&=(0,0,0,12,13,23), \\
    \mathfrak{h}_{8}&=(0,0,0,0,0,12),
    \end{aligned}\qquad\begin{aligned}
    \mathfrak{h}_{9}&=(0,0,0,0,12,14+25),\\
    \mathfrak{h}_{10}&=(0,0,0,12,13,14), \\
    \mathfrak{h}_{11}&=(0,0,0,12,13,14+23), \\
    \mathfrak{h}_{12}&=(0,0,0,12,13,24), \\
    \mathfrak{h}_{13}&=(0,0,0,12,13+14,24), \\
    \mathfrak{h}_{14}&=(0,0,0,12,14,13+42).
    \end{aligned}$$
\end{proposition}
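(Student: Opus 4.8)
The plan is to read off the stated list as the common part of three existing classifications, one for each structure named in the proposition. The three cited references classify, respectively, the six-dimensional nilpotent Lie algebras admitting a pseudo-Kähler structure \cite{PK_6dim_nil_04}, a bi-Lagrangian structure \cite{Bi-Lagrangian_Ham_19}, and a complex product structure \cite{complex_product_dim6_08}. A Lie algebra admits all three structures precisely when it occurs in all three lists simultaneously, so the assertion to prove amounts to computing the intersection of these lists and checking that it equals $\{\mathfrak{h}_2,\mathfrak{h}_4,\mathfrak{h}_5,\mathfrak{h}_6,\mathfrak{h}_7,\mathfrak{h}_8,\mathfrak{h}_9,\mathfrak{h}_{10},\mathfrak{h}_{11},\mathfrak{h}_{12},\mathfrak{h}_{13},\mathfrak{h}_{14}\}$.

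A useful first reduction is to restrict the search to Salamon's finite list of six-dimensional nilpotent Lie algebras admitting a complex structure \cite{Salamon:ComplexStructures}. Indeed, both a pseudo-Kähler and a complex product structure contain an integrable complex structure by definition, so any Lie algebra admitting all three structures must in particular admit a complex structure and therefore appears among Salamon's $\mathfrak{h}_i$. Consequently it suffices to decide, for each $\mathfrak{h}_i$ in that finite list, whether it additionally carries all three structures, which turns the problem into a finite verification rather than an open-ended search.

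The main step, and the one I expect to be the principal obstacle, is reconciling the differing notations and choices of representatives used across the four papers involved. Each classification reference fixes its own basis and labeling, so I would translate every entry into Salamon's $\mathfrak{h}_i$-notation, matching isomorphism classes by means of invariants --- the dimensions of the terms of the lower central series, the dimension of the center, the first Betti number $\dim(\frg/[\frg,\frg])$, the rank of the differential $\d\colon\frg^*\to\Lambda^2\frg^*$, and indecomposability --- and, whenever these invariants fail to separate two candidates, by exhibiting an explicit linear isomorphism intertwining the bracket relations. Intersecting the three translated lists and discarding every algebra missing from even one of them then leaves exactly the twelve algebras in the statement, every non-abelian member of Salamon's list absent from the statement failing at least one of the three existence conditions. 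Finally, as a check on the forward implication I would confirm, for each surviving algebra, that the pseudo-Kähler, bi-Lagrangian and complex product data recorded in the sources indeed satisfy their defining relations on the stated basis, while the reverse implication is inherited directly from the completeness of the three cited classifications.
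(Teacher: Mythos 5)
Your proposal is correct and follows essentially the same route as the paper: the paper's proof also intersects the three cited classifications, observing that every six-dimensional nilpotent pseudo-K\"ahler Lie algebra is in the bi-Lagrangian list and that all of them except $\mathfrak{h}_{15}=(0,0,0,12,13+42,14+23)$ admit a complex product structure. Your additional reduction to Salamon's list and the discussion of matching representatives via invariants are sensible bookkeeping refinements of the same argument rather than a different method.
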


\begin{proof}
   The six-dimensional nilpotent pseudo-Kähler Lie algebras have been classified in~\cite{PK_6dim_nil_04}. Comparing this list with the classification of six-dimensional nilpotent bi-Lagrangian Lie algebras stated in~\cite{Bi-Lagrangian_Ham_19}, we find that every six-dimensional nilpotent pseudo-Kähler Lie algebra admits a bi-Lagrangian structure. Finally, comparing with the classification of six-dimensional nilpotent Lie algebras admitting complex product structures in~\cite{complex_product_dim6_08}, we observe that, except for $\mathfrak{h}_{15}=(0,0,0,12,13+42,14+23)$, every Lie algebra from the list of six-dimensional nilpotent pseudo-Kähler Lie algebras admits a complex product structure.
\end{proof}

For six-dimensional nilpotent Born Lie algebras, the subalgebras $\frg_{\pm}$ of an integrable Born structure must either be abelian $\R^3$ or isomorphic to the three-dimensional Heisenberg Lie algebra $\mathfrak{heis}_3$. The following lemma shows that we can always assume one of the subalgebras to be abelian:

\begin{lemma}
\label{lemma:heis3-to-R3}
    Let $\frg$ be a six-dimensional nilpotent Lie algebra with an integrable Born structure $(h,J,\frg_{\pm})$ with $\frg_+\cong\frg_-\cong\mathfrak{heis}_3$. Then there is another integrable Born structure $(h,J,\tilde{\frg}_{\pm})$ on $\frg$ with the same pseudo-Riemannian metric $h$ and complex structure $J$ and $\tilde{\frg}_+\cong\mathfrak{heis}_3$ but $\tilde{\frg}_-\cong\R^3$. Conversely given an integrable Born structure $(h,J,\frg_{\pm})$ on $\frg$ with $\frg_+\cong\mathfrak{heis}_3$ and $\frg_-\cong\R^3$ there is another integrable Born structure $(h,J,\tilde{\frg}_{\pm})$ on $\frg$ with $\tilde{\frg}_+\cong\tilde{\frg}_-\cong\mathfrak{heis}_3$.
\end{lemma}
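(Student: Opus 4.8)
The plan is to use the Hermitian reformulation of Corollary~\ref{cor:integrable_Born_Hermitian_version}: since the pseudo-Kähler data $(h,J)$ are to be kept fixed, constructing the new Born structure is the same as producing a new pair of complementary, $h$-orthogonal subalgebras $\tilde{\frg}_{\pm}$ that are interchanged by $J$, with $\tilde{\frg}_+\cong\mathfrak{heis}_3$ and $\tilde{\frg}_-\cong\R^3$. As $J$ is fixed and must satisfy $J\tilde{\frg}_+=\tilde{\frg}_-$, the subalgebra $\tilde{\frg}_+$ determines $\tilde{\frg}_-=J\tilde{\frg}_+$, and by Table~\ref{tab:comp_bil_forms} the $h$-orthogonality of $\tilde{\frg}_\pm$ is equivalent to $\tilde{\frg}_+$ being Lagrangian for $\omega$. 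Thus the entire statement reduces to exhibiting a single three-dimensional Lagrangian subalgebra $V$ whose $J$-image $JV$ is again a subalgebra and such that exactly one of $V$, $JV$ is abelian.

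To construct $V$, I would fix Heisenberg bases $\frg_+=\langle x_1,x_2,x_3\rangle$, $[x_1,x_2]=x_3$, and $\frg_-=\langle y_1,y_2,y_3\rangle$, $[y_1,y_2]=y_3$, so that the derived ideals are $[\frg_+,\frg_+]=\R x_3$ and $[\frg_-,\frg_-]=\R y_3$. The first step is to read off from the bicross bracket of Definition~\ref{def: bicross product}, together with the integrability relations of Theorem~\ref{thm:bicross_Born}, how $x_3$ and $y_3$ interact with the cross terms; nilpotency of $\frg$ pins these derived directions into the center of $\frg$, and it is this centrality that will keep the modified subspaces bracket-closed. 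The idea is then to transfer the single Heisenberg relation of one factor onto the other by a shear: one replaces the generators of $\frg_-$ by $\tilde{y}_i=y_i+\Phi(y_i)$ for a suitable linear $\Phi\colon\frg_-\to\frg_+$ chosen so that the $\frg_-$-components of the cross brackets cancel $y_3$ in $[\tilde{y}_1,\tilde{y}_2]$, and absorbs the remaining $\frg_+$-components into a redefined third generator $\tilde{y}_3$. Setting $V:=\langle\tilde{y}_1,\tilde{y}_2,\tilde{y}_3\rangle$, one verifies that $V$ is abelian, that $\omega|_V=0$ is preserved by the shear (using again Table~\ref{tab:comp_bil_forms} and that $\frg_\pm$ were already Lagrangian), and that $JV\cong\mathfrak{heis}_3$ is a subalgebra.

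The converse direction is obtained by running the same construction backwards, introducing a Heisenberg relation into the abelian factor via the inverse shear. The main obstacle is the simultaneous fulfilment of three competing requirements on $V$: bracket-closure of $V$, bracket-closure of $JV$, and abelianness of $V$; since $J$ is fixed, one cannot adjust the two subalgebras independently, and the shear $\Phi$ must be engineered so that all three hold at once. Controlling this is exactly where nilpotency (forcing the derived directions to be central, so that the shear produces no new obstructions) and the compatibility conditions of Theorem~\ref{thm:bicross_Born} enter decisively; the remaining verifications are then routine bracket computations.
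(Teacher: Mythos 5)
Your reduction is correct and your ansatz has the right shape: with $(h,J)$ fixed, Corollary~\ref{cor:integrable_Born_Hermitian_version} does reduce the lemma to producing one three-dimensional subalgebra $V$ such that $JV$ is also a subalgebra, $V$ and $JV$ are complementary and $h$-orthogonal (equivalently, $V$ is $\omega$-Lagrangian), with the prescribed isomorphism types; and the paper's new subalgebras are indeed ``shears'' in your sense, namely the eigenspaces of the rotated product structure $A_\theta=\cos\theta\,A+\sin\theta\,JA$, which for $\cos(\theta/2)\neq 0$ are exactly the graphs of $\tan(\theta/2)\,J|_{\frg_\pm}$, i.e.\ shears with $\Phi=t\,J$. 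The genuine gap is that you never construct $\Phi$ nor prove that a suitable one exists: the simultaneous requirements that $V$ and $JV$ be subalgebras, that $V$ be Lagrangian and complementary to $JV$, and that exactly one of the two be abelian --- which you yourself name ``the main obstacle'' --- constitute the entire content of the lemma, and your proposal discharges them by assertion (``the shear $\Phi$ must be engineered so that all three hold at once'', the rest being ``routine bracket computations''). The paper resolves precisely this point by citing Proposition~5.1 of \cite{complex_product_dim6_08}, a non-trivial structure theorem for complex product structures on six-dimensional nilpotent Lie algebras, which guarantees a value of $\theta$ for which the eigenspace subalgebras of $A_\theta$ are isomorphic to $\mathfrak{heis}_3$ and $\R^3$ (and conversely, back to two copies of $\mathfrak{heis}_3$); once that is available, all that remains is the short verification $h(A_\theta X,A_\theta Y)=h(X,Y)$ using Table~\ref{tab:comp_bil_forms}. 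Without this input, or a replacement argument of comparable substance, your sketch is not a proof.

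Beyond the main gap, three of your supporting claims are not automatic, and checking them essentially points back to the special choice $\Phi=t\,J$. First, a shear does \emph{not} in general preserve the Lagrangian condition: for $V=\{y+\Phi y: y\in\frg_-\}$ one has $\omega|_V=0$ if and only if the bilinear form $(y_1,y_2)\mapsto\omega(y_1,\Phi y_2)$ is symmetric; this holds for $\Phi=t\,J|_{\frg_-}$, where the form equals $t\,h(y_2,y_1)$, but fails for generic $\Phi$. Second, complementarity $V\oplus JV=\frg$ is absent from your reduction and is not automatic when $h$ is indefinite (a Lagrangian subspace can even be $J$-invariant); one must check $\Id+J\Phi J\Phi$ is invertible on $\frg_-$, which for $\Phi=t\,J$ gives $(1+t^2)\Id$. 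Third, your claim that nilpotency of $\frg$ forces the derived lines $[\frg_\pm,\frg_\pm]$ to be central in $\frg$ is itself a structural statement requiring proof in this setting --- in the paper such facts are again imported from \cite{complex_product_dim6_08} --- and is not an immediate consequence of nilpotency alone.
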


\begin{proof}
    It is shown in~\cite[Proposition~5.1]{complex_product_dim6_08} that given a complex product structure $(J,A)$ on a six-dimensional nilpotent Lie algebra such that the subalgebras coming from the eigenbundles of $A$ are both isomorphic to $\mathfrak{heis}_3$, there is a $\theta\in [0,2\pi)$ such that $(J,A_{\theta})$ is another complex product structure with corresponding subalgebras $\frg_+^{\theta}\cong\mathfrak{heis}_3$ and $\frg^{\theta}_-\cong \R^3$, where
    \begin{align*}
        A_{\theta}=\cos\theta\, A+\sin\theta \, JA=\cos\theta\, A+\sin\theta \, B.
    \end{align*}
    
    Suppose now that for a pseudo-Riemannian metric $h$ on $\frg$, $(h,J,\frg_{\pm})$ is an integrable Born structure on $\frg$. Using the relations presented in Table~\ref{tab:comp_bil_forms}, we observe that for every $X,Y\in \frg$ it holds
    \begin{align*}
        h(A_{\theta}X,A_{\theta}Y)=&\sin^2\theta\,h(AX,AY)+\cos^2\theta h(BX,BY)+\sin\theta\cos\theta\,\left(h(AX,BY)+h(BX,AY)\right)\\
        =&(\sin^2\theta+\cos^2\theta)\,h(X,Y)+\sin\theta\cos\theta\,\left(h(X,ABY)+h(X,BAY)\right)\\
        =&h(X,Y)+\sin\theta\cos\theta\,\left(h(X,ABY)+h(X,-ABY)\right)\\
        =&h(X,Y).
    \end{align*}
    Therefore, setting $\tilde{\frg}_{\pm}:=\frg^{\theta}_{\pm}$, $(h,J,\tilde{\frg}_{\pm})$ is an integrable Born structure on $\frg$. The proof of the converse statement works similarly using again~\cite[Proposition~5.1]{complex_product_dim6_08}. 
\end{proof}

Thanks to Lemma~\ref{lemma:heis3-to-R3}, in order to classify all six-dimensional nilpotent Lie algebras admitting an integrable Born structure, it is enough to consider the cases where only one subalgebra is isomorphic to $\mathfrak{heis}_3$ or both subalgebras are abelian. We study each case separately.

\begin{remark}
    Positive-definite Kähler nilpotent Lie algebras are necessarily abelian by \cite{Hasegawa89} (see also \cite{DM96}). Recall that the metric $h$ of the Born structure is given by $h=h_+\oplus h_-$, where $h_\pm$ are pseudo-Riemannian metrics on the Lie algebras $\frg_\pm$. Therefore, in order to construct non-abelian six-dimensional nilpotent Born Lie algebras, we only consider indefinite metrics $h_\pm$ on the three-dimensional Lie algebras $\R^3$ and $\mathfrak{heis}_3$.
\end{remark}

\subsubsection{Case \texorpdfstring{$\R^3\bowtie\mathfrak{heis}_3$}{R3+heis3}}

First we show that the only metric that we can consider on $\mathfrak{heis}_3$ is the flat one.

\begin{lemma}\label{lemma:heis_only_flat}
    Let $h_-$ be a Lorentzian metric on $\frg_-\cong\mathfrak{heis}_3$, $Q\colon\frg_+\cong\R^3\longrightarrow \frg_-$ an isomorphism of vector spaces and set $h_+:=Q^*h_-$. Then there are representations $\varphi\colon\frg_-\longrightarrow\End(\frg_+)$ and $\rho\colon \frg_+\longrightarrow\End(\frg_-)$ satisfying the conditions in Theorem~\ref{thm:bicross_Born} only if $h_-$ is flat.
\end{lemma}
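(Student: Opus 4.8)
The plan is to reduce the flatness of $h_-$ to a purely algebraic statement about the symmetric parts of $\varphi$, and then to rule out the non-flat metrics by reducing to normal forms. First I would transport everything to $\frg_-$ by conjugating with the isometry $Q$: writing $\phi(X_-):=Q\varphi(X_-)Q^{-1}\in\End(\frg_-)$, condition~1 of Theorem~\ref{thm:bicross_Born} becomes $\phi(X_-)^a=\nabla^-_{X_-}$ (adjoints now taken with respect to $h_-$) and condition~2 becomes $\phi(X_-)^sY_-=\phi(Y_-)^sX_-$. Since $\phi$ is still a representation of $\mathfrak{heis}_3$, I would conjugate the curvature operator of $\nabla^-$ by $Q$ and take $h_-$-antisymmetric parts of the representation identity $\phi([X_-,Y_-])=[\phi(X_-),\phi(Y_-)]$ to obtain the key formula
\begin{equation*}
R^-(X_-,Y_-)=[\phi(X_-)^a,\phi(Y_-)^a]-\phi([X_-,Y_-])^a=-[\phi(X_-)^s,\phi(Y_-)^s],
\end{equation*}
where the first equality is condition~1 and the second uses that the commutator of two $h_-$-symmetric and of two $h_-$-antisymmetric endomorphisms are both antisymmetric. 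Thus $h_-$ is flat if and only if the symmetric parts $\phi(e_1)^s,\phi(e_2)^s,\phi(e_3)^s$ pairwise commute, and the whole problem reduces to establishing this.

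To control the symmetric parts I would combine condition~2 with the representation law. Condition~2 says precisely that the trilinear form $T(X_-,Y_-,Z_-):=h_-(\phi(X_-)^sY_-,Z_-)$ is totally symmetric (it is symmetric in the last two slots because $\phi(X_-)^s$ is $h_-$-symmetric, and in the first two by condition~2). Taking instead $h_-$-symmetric parts of $\phi(e_3)=[\phi(e_1),\phi(e_2)]$ and of $[\phi(e_3),\phi(e_i)]=0$ yields $\phi(e_3)^s=[\phi(e_1)^s,\nabla^-_{e_2}]+[\nabla^-_{e_1},\phi(e_2)^s]$ together with the two analogous identities involving $e_3$. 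These express the unknown symmetric parts in terms of the metric-determined Levi-Civita operators $\nabla^-_{e_i}$. Since the center $\mathfrak{z}=[\frg_-,\frg_-]=\langle e_3\rangle$ is automorphism-invariant and the automorphism group of $\mathfrak{heis}_3$ acts, up to scale, transitively on Lorentzian metrics with a fixed causal type of $\mathfrak{z}$, I may assume $h_-$ is one of the three normal forms distinguished by whether $\mathfrak{z}$ is timelike, spacelike or null. A direct Koszul computation then shows that $\nabla^-$, and hence $R^-$, vanishes exactly in the null case.

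The main obstacle is the final step: showing that total symmetry of $T$ together with the symmetric-part relations above is inconsistent whenever $\mathfrak{z}$ is non-null. Here I would substitute the explicit operators $\nabla^-_{e_i}$ of a non-null normal form into the symmetric-part relations, solve for the $\phi(e_i)^s$, and verify that the constraint $\phi(e_i)^se_j=\phi(e_j)^se_i$ then cannot be satisfied; this forces $R^-=-[\phi(e_i)^s,\phi(e_j)^s]\neq0$ and contradicts the assumed existence of $\varphi$. Hence $\mathfrak{z}$ must be null and $h_-$ is flat. Should the $\varphi$-data alone prove insufficient to obstruct the non-null cases, the representation $\rho$ supplies extra leverage: because $\frg_+=\R^3$ is abelian and therefore flat, condition~3 forces every $\rho(X_+)$ to be $h_-$-symmetric and pairwise commuting, and condition~4 together with the bicross-product compatibility~\eqref{eq:bicross_condition} couples $\rho$ back to $\varphi$, producing the additional relations among the symmetric parts needed to close the argument.
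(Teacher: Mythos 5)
Your algebraic framework is correct, and in places tidier than the paper's own argument: conjugating by $Q$ is legitimate (an isometry intertwines $h_+$- and $h_-$-adjoints, so condition~1 of Theorem~\ref{thm:bicross_Born} does become $\phi(X_-)^a=\nabla^-_{X_-}$), and your identity
\begin{equation*}
R^-(X_-,Y_-)=[\phi(X_-)^a,\phi(Y_-)^a]-\phi([X_-,Y_-]_-)^a=-[\phi(X_-)^s,\phi(Y_-)^s]
\end{equation*}
follows exactly as you say from condition~1 and the parity of commutators of $h_-$-symmetric and $h_-$-antisymmetric operators; the paper never isolates this formula and instead works directly with matrices. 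Your reformulation of condition~2 as total symmetry of $T(X,Y,Z)=h_-(\phi(X)^sY,Z)$, and the reduction to Rahmani's three normal forms \cite{Rah92,RR06}, of which only the null-center one is flat, likewise match the paper's starting point.

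The problem is that your proof stops where the actual work begins. Everything you establish is a reformulation; the lemma is proved only once one shows that, for the two non-flat normal forms, the combined constraints (antisymmetric parts pinned to $\nabla^-_{e_i}$, total symmetry of $T$, and the symmetric and antisymmetric parts of the representation identities) are inconsistent. You only announce this step (``I would substitute \ldots\ and verify that the constraint \ldots\ cannot be satisfied''), and your closing hedge --- that $\rho$ might be needed if the $\varphi$-data do not suffice --- shows you have not checked that the argument closes. That verification is precisely the entire content of the paper's proof: for $h_-=h_1$ it solves conditions~1 and~2 explicitly, obtaining a finite-parameter family of candidate maps $\varphi$, and then checks that no member of this family satisfies $[\varphi(e_1),\varphi(e_2)]=\varphi(e_3)$ and $[\varphi(e_1),\varphi(e_3)]=[\varphi(e_2),\varphi(e_3)]=0$; the case $h_-=h_2$ is analogous, and $\rho$ is never needed. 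Until you carry out that computation in your own normal form (solve for the $\phi(e_i)^s$ and exhibit the contradiction with $R^-\neq0$), the lemma is not established. One smaller error: you claim a Koszul computation shows that $\nabla^-$ vanishes in the null case. That is impossible for any metric on $\mathfrak{heis}_3$, since torsion-freeness forces $\nabla^-_{e_1}e_2-\nabla^-_{e_2}e_1=[e_1,e_2]_-=e_3\neq0$; what vanishes for the null-center metric is the curvature $R^-$, not the connection, and that weaker statement (recorded in \cite{RR06}) is what your argument actually needs.
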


\begin{proof}
    Lorentzian metrics on $\mathfrak{heis}_3=\escal{e_1,e_2,e_3\mid[e_1,e_2]=e_3}$ have been classified in \cite{Rah92}, where it is shown that there are three non-isometric Lorentzian metrics: \begin{align*}
        h_1&=-e^1\otimes e^1+e^2\otimes e^2+e^3\otimes e^3,\\
        h_2&=e^1\otimes e^1+e^2\otimes e^2-e^3\otimes e^3,\\
        h_3&=e^1\otimes e^1-e^2\otimes e^2+e^2\odot e^3,
    \end{align*} where $e^i\odot e^j=e^i\otimes e^j+e^j\otimes e^i$. Among these metrics, the only one that is flat is $h_3$ (see \cite{RR06}). Now let $h_-$ be a Lorentzian metric on $\mathfrak{heis}_3$. We choose a basis $\{f_1,f_2,f_3\}$ of $\frg_+\cong\R^3$ by setting $f_i:=Q^{-1}e_i$. With respect to these choices of bases the complex structure $J$ on $\frg_\varphi^\rho$ defined by \eqref{eq:complex_structure_Q} just takes the form \begin{equation}\label{eq:J_with_Q=identity}
        J=\begin{pmatrix}
        0&-\mathbbm{1}_3\\
        \mathbbm{1}_3&0
    \end{pmatrix},
    \end{equation} where $\mathbbm{1}_3$ denotes the $3\times3$ identity matrix. Suppose that $h_-=h_1$. Then the covariant derivative endomorphisms are given by $$\nabla^-_{e_1}=\begin{pmatrix}
        0 & 0 & 0 \\
        0 & 0 & -\frac{1}{2} \\
        0 & \frac{1}{2} & 0
    \end{pmatrix},\quad\nabla^-_{e_2}=\begin{pmatrix}
        0 & 0 & -\frac{1}{2} \\
        0 & 0 & 0 \\
        -\frac{1}{2} & 0 & 0
    \end{pmatrix},\quad\nabla^-_{e_3}=\begin{pmatrix}
        0 & -\frac{1}{2} & 0 \\
        -\frac{1}{2} & 0 & 0 \\
        0 & 0 & 0
    \end{pmatrix}.$$

    A map $\varphi\colon\frg_-\longrightarrow\End(\frg_+)$ satisfies the conditions in Theorem~\ref{thm:bicross_Born} if and only if is of the form $$\varphi(e_1)=\begin{pmatrix}
        x_{1} & x_{2} & x_{3} \\
        -x_{2} & x_{5} & x_{6} \\
        -x_{3} & x_{6} + 1 & x_{9}
    \end{pmatrix},\quad\varphi(e_2)=\begin{pmatrix}
        x_{2} & -x_{5} & -x_{6} - 1 \\
        x_{5} & y_{5} & y_{6} \\
        x_{6} & y_{6} & z_{6}
    \end{pmatrix},\quad\varphi(e_3)=\begin{pmatrix}
        x_{3} & -x_{6} - 1 & -x_{9} \\
        x_{6} & y_{6} & z_{6} \\
        x_{9} & z_{6} & z_{9}
    \end{pmatrix}.$$
    
    However, it is not hard to see that such map $\varphi\colon\frg_-\longrightarrow\End(\frg_+)$ is never a representation, that is, it never satisfies $[\varphi(e_i),\varphi(e_j)]=\varphi([e_i,e_j])$ for all $i,j$, which in this particular case amounts to $[\varphi(e_1),\varphi(e_2)]=\varphi(e_3)$ and $[\varphi(e_1),\varphi(e_3)]=[\varphi(e_2),\varphi(e_3)]=0$. The same argument applies if we consider the metric $h_2$.
\end{proof}

\begin{proposition}\label{prop:Born_R3+heis3}
    Let $\frg$ be a non-abelian six-dimensional nilpotent Lie algebra. Then it admits an integrable Born structure with $\frg_-\cong\mathfrak{heis}_3$ if and only if it is isomorphic to one of the following:
    \begin{align*}
    \mathfrak{h}_{4}&=(0,0,0,0,12,14+23),\\
        \mathfrak{h}_{7}&=(0,0,0,12,13,23),\\
        \mathfrak{h}_{10}&=(0,0,0,12,13,14),\\
        \mathfrak{h}_{11}&=(0,0,0,12,13,14+23),\\
        \mathfrak{h}_{13}&=(0,0,0,12,13+14,24).
        \end{align*}
\end{proposition}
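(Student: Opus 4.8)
The plan is to carry out a direct construction-and-exhaustion argument based on the reduction already achieved. By Lemma~\ref{lemma:heis_only_flat} we may assume the metric $h_-$ on $\frg_-\cong\mathfrak{heis}_3$ is the flat Lorentzian metric $h_3$, and by the remark following the $\R^3\bowtie\mathfrak{heis}_3$ heading only indefinite signatures need to be considered (a positive-definite $h$ forces $\frg$ abelian). Choosing the basis $f_i:=Q^{-1}e_i$ as in the proof of Lemma~\ref{lemma:heis_only_flat}, the complex structure $J$ takes the normal form \eqref{eq:J_with_Q=identity}, so that the whole Born structure is encoded in the single representation $\rho\equiv 0$ case together with $\varphi\colon\frg_-\longrightarrow\End(\frg_+)$. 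First I would write down the general $\varphi$ satisfying conditions~(1)--(4) of Theorem~\ref{thm:bicross_Born}: conditions~(1) and~(2) fix $\varphi$ up to the free parameters appearing there (the antisymmetric part is pinned to $Q^{-1}\nabla^-_{X_-}Q$, the symmetric part is constrained by the symmetry relation), exactly as in the $h_1$ computation in Lemma~\ref{lemma:heis_only_flat}, but now for the flat metric $h_3$.

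Next I would impose that $\varphi$ be an actual representation, i.e.\ that $[\varphi(e_i),\varphi(e_j)]=\varphi([e_i,e_j]_-)$, which for $\mathfrak{heis}_3$ reduces to the three relations $[\varphi(e_1),\varphi(e_2)]=\varphi(e_3)$ and $[\varphi(e_1),\varphi(e_3)]=[\varphi(e_2),\varphi(e_3)]=0$. This is a system of polynomial equations in the free parameters of $\varphi$; solving it cuts the parameter space down to a finite-dimensional family of admissible representations. For each solution the bicross product $\frg=\R^3\bowtie_\varphi\mathfrak{heis}_3$ is a genuine six-dimensional nilpotent Lie algebra carrying an integrable Born structure by Theorem~\ref{thm:bicross_Born} (with $\rho\equiv0$, so this is in fact the semidirect-product situation of Theorem~\ref{thm:integrable_Born_semidirect_product}). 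The final step is to compute the isomorphism type of each resulting Lie algebra, using Salamon's invariants (the dimensions of the terms of the lower central series and of $\d$ on the relevant exterior powers) to match it against the list in Proposition~\ref{prop: 6d pK and biL and CP}, thereby showing that precisely $\mathfrak{h}_4,\mathfrak{h}_7,\mathfrak{h}_{10},\mathfrak{h}_{11},\mathfrak{h}_{13}$ arise and no others.

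For the converse inclusion, I would exhibit for each of these five algebras an explicit integrable Born structure with $\frg_-\cong\mathfrak{heis}_3$, presumably in a table of the same form as Table~\ref{tab: 4d born}, giving the bracket relations, the metric $h$, the complex structure $J$ and the subalgebras $\frg_\pm$; these are read off directly from the admissible $(\varphi,Q)$ found above, so the two directions are really the two sides of the same computation.

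The main obstacle I anticipate is the representation condition: parametrizing $\varphi$ via Theorem~\ref{thm:bicross_Born} leaves several free entries, and the quadratic constraints $[\varphi(e_i),\varphi(e_j)]=\varphi([e_i,e_j]_-)$ form a nonlinear system whose complete solution set must be determined, not merely shown nonempty. Unlike the clean non-existence in Lemma~\ref{lemma:heis_only_flat} (where the bracket relations were outright inconsistent for $h_1,h_2$), here solutions do exist, so I must solve the system exactly and then—this is the genuinely delicate part—correctly identify the isomorphism class of every bicross product obtained, making sure no spurious algebra outside the claimed list slips through and that none of the five is missed. Organizing the casework so that the parameter families map cleanly onto the five Salamon labels is where the real work lies.
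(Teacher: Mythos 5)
Your overall skeleton matches the paper's proof (reduce to the flat Lorentzian metric on $\mathfrak{heis}_3$ via Lemma~\ref{lemma:heis_only_flat}, normalize $J$ to the form \eqref{eq:J_with_Q=identity} by choosing $f_i=Q^{-1}e_i$, solve the algebraic constraints, then identify isomorphism types via the numerical invariants of Table~\ref{tab:numerical_invariants}). But there is a genuine gap at the very start of your reduction: you assert that ``the whole Born structure is encoded in the single representation $\rho\equiv 0$ case together with $\varphi$,'' i.e.\ you restrict to semidirect products $\R^3\rtimes_\varphi\mathfrak{heis}_3$. This is false and not harmless. By Proposition~\ref{prop:every_Born_is_bicross}, an arbitrary integrable Born structure with $\frg_+\cong\R^3$ and $\frg_-\cong\mathfrak{heis}_3$ determines \emph{both} representations from the bracket, $\varphi(X)Y=[X,Y]_+$ and $\rho(Y)X=[Y,X]_-$; there is no freedom to assume $\rho\equiv 0$, so your exhaustion argument does not cover all Born structures and the ``only if'' direction collapses.

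The restriction also breaks the ``if'' direction: the paper solves the full system (the matched-pair conditions \eqref{eq:bicross_condition} together with conditions (1)--(4) of Theorem~\ref{thm:bicross_Born}) for the pair $(\varphi,\rho)$ simultaneously, obtaining a family parametrized by $x,x_0$ (in $\varphi$) and $y,y_0$ (in $\rho$), and the identification of isomorphism types shows that $\mathfrak{h}_4,\mathfrak{h}_7,\mathfrak{h}_{10},\mathfrak{h}_{11}$ arise exactly when $y=0$, while $\mathfrak{h}_{13}$ arises \emph{only} when $y\neq 0$, i.e.\ only for a genuine bicross product with $\rho\not\equiv 0$. So under your hypothesis $\rho\equiv 0$ the solution set produces just four of the five algebras, and $\mathfrak{h}_{13}$ --- which you nevertheless list in your conclusion --- cannot be reached; your method as described is inconsistent with the result you claim. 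The fix is simply to drop the $\rho\equiv 0$ assumption and run your computation with both $\varphi$ and $\rho$ as unknowns, which is exactly what the paper does.
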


\begin{proof}
    Let us consider $\frg_-=\mathfrak{heis}_3=\escal{e_1,e_2,e_3\mid[e_1,e_2]=e_3}$ equipped with the flat Lorentzian metric $h_-=e^1\otimes e^1-e^2\otimes e^2+e^2\odot e^3$ and $\frg_+\cong\R^3$. Let $Q\colon\frg_+\longrightarrow\frg_-$ be a vector space isomorphism and set $h_+:=Q^*h_-$. On $\frg_+$ we choose a basis $\{f_1,f_2,f_3\}$ by setting $f_i:=Q^{-1}e_i$. With respect to these choices of bases, the complex structure $J$ on $\frg_\varphi^\rho$ takes the form \eqref{eq:J_with_Q=identity}. By direct computation, we get that two representations $\varphi\colon\frg_-\longrightarrow\End(\frg_+)$ and $\rho\colon\frg_+\longrightarrow\End(\frg_-)$ satisfy \eqref{eq:bicross_condition} and all the conditions in Theorem~\ref{thm:bicross_Born} if and only if they are of the form $$\varphi(e_1)=\begin{pmatrix}
        0&0&0\\
        0&0&0\\
        0&x&0
    \end{pmatrix},\quad\varphi(e_2)=\begin{pmatrix}
        0&x+1&0\\
        0&0&0\\
        x-1&x_0&0
    \end{pmatrix},\quad\varphi(e_3)=0$$ and $$\rho(f_1)=\begin{pmatrix}
        0&0&0\\
        0&0&0\\
        0&y&0
    \end{pmatrix},\quad\rho(f_2)=\begin{pmatrix}
        0&y&0\\
        0&0&0\\
        y&y_0&0
    \end{pmatrix},\quad\rho(f_3)=0,$$ where $x_0,y_0,x,y\in\R$. We check a posteriori that the isomorphism class of $\frg_\varphi^\rho$ do not depend on the particular values of $x_0$ and $y_0$, so we set them to zero now. Comparing some numerical invariants using Table~\ref{tab:numerical_invariants} we obtain the following: $$\frg_\varphi^\rho\cong\begin{cases}
        \mathfrak{h}_{4}=(0,0,0,0,12,14+23)&\mbox{if }y=0,x=-1,\\
        \mathfrak{h}_{10}=(0,0,0,12,13,14)&\mbox{if }y=0,x=0,\\
        \mathfrak{h}_{7}=(0,0,0,12,13,23)&\mbox{if }y=0,x=1,\\
        \mathfrak{h}_{11}=(0,0,0,12,13,14+23)&\mbox{if }y=0,x\not\in\{-1,0,1\},\\
        \mathfrak{h}_{13}=(0,0,0,12,13+14,24)&\mbox{otherwise}.
    \end{cases}$$
\end{proof}

\subsubsection{Case \texorpdfstring{$\R^3\bowtie\R^3$}{R3+R3}}

In this case we can find necessary conditions for the existence of an integrable Born structure.

\begin{lemma}\label{lemma:center>=3}
    Suppose that $\frg$ is a six-dimensional two-step nilpotent Lie algebra with an integrable Born structure $(g,h,\omega)$ such that both Lagrangian subalgebras $\frg_{\pm}$ are abelian. Then the center of $\frg$ is at least three-dimensional.
\end{lemma}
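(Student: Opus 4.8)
The plan is to translate everything into bicross product data and read the center off the two representations. By Proposition~\ref{prop:every_Born_is_bicross} I may write $\frg=\frg_+\bowtie_\varphi^\rho\frg_-$ with $\frg_{\pm}\cong\R^3$ abelian and $h_{\pm}$ the induced metrics. These metrics are Lorentzian: if one of them were definite, then (by the remark preceding this case) $\frg$ would be forced to be abelian and the conclusion is immediate. Since $\frg_{\pm}$ are abelian, $\nabla^{\pm}=0$, so the integrability conditions of Theorem~\ref{thm:bicross_Born} reduce to: each $\varphi(X_-)$ is $h_+$-symmetric, each $\rho(X_+)$ is $h_-$-symmetric, and the two identities $\varphi(QX_+)Y_+=\varphi(QY_+)X_+$ and $\rho(X_+)QY_+=\rho(Y_+)QX_+$ hold.

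First I would compute the center. Writing out the bracket with $\frg_\pm$ abelian, an element $X_++X_-$ is central if and only if $\varphi(X_-)=0$, $\rho(X_+)=0$, and $\varphi(Y_-)X_+=0$, $\rho(Y_+)X_-=0$ for all $Y_\pm$. These conditions decouple, so $Z(\frg)=Z_+\oplus Z_-$ with $Z_+=K_\varphi\cap\ker\rho$, where $K_\varphi:=\bigcap_{X_-}\ker\varphi(X_-)\subseteq\frg_+$ and $\ker\rho:=\{X_+:\rho(X_+)=0\}$, and symmetrically for $Z_-$. Using the two identities above, one checks that $J$ maps $Z_+$ into $Z_-$ and back, so $\dim Z_+=\dim Z_-$ and it suffices to prove $\dim Z_+\ge 2$.

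Now I bring in two-step nilpotency: $[\frg,\frg]\subseteq Z(\frg)$, and since the $\frg_+$-component of $[\frg,\frg]$ is $V_+:=\sum_{X_-}\operatorname{im}\varphi(X_-)$, this gives $V_+\subseteq Z_+\subseteq K_\varphi$. Hence $\varphi(Y_-)\varphi(X_-)=0$ for all $X_-,Y_-$; in particular $\varphi(X_-)^2=0$. A symmetric square-zero endomorphism $S$ of the Lorentzian space $\frg_+$ satisfies $\operatorname{im}S\subseteq\ker S=(\operatorname{im}S)^{\perp}$, so its image is isotropic and therefore at most a null line. Consequently $V_+$ is totally isotropic, $\dim V_+\le 1$, and $K_\varphi=V_+^{\perp}$ has dimension at least two. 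The same analysis applied to $\rho$ shows each $\rho(X_+)$ has rank at most one with null image, so $V_-:=\sum_X\operatorname{im}\rho(X_+)$ is a null line (or zero).

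It remains to show $Z_+=K_\varphi$, i.e. $K_\varphi\subseteq\ker\rho$. If $\rho=0$ this is immediate and $\dim Z_+=\dim K_\varphi\ge 2$. Otherwise $V_-=\Span\{u_0\}$ with $u_0$ null, and condition~4 forces $\rho(X_+)=\kappa\,h_+(Q^{-1}u_0,X_+)\,u_0\otimes u_0^{\flat}$, so that $\ker\rho=(Q^{-1}u_0)^{\perp}$. Writing $V_+=\Span\{w_0\}$, the inclusion $V_+\subseteq\ker\rho$ coming from two-step nilpotency yields $h_-(u_0,Qw_0)=0$, where both $u_0$ and $Qw_0$ are null vectors of the Lorentzian space $\frg_-$. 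The crux of the whole argument is precisely here: in a three-dimensional Lorentzian space two orthogonal null vectors must be proportional, so $u_0$ and $Qw_0$ are parallel, i.e. $V_-=Q(V_+)$. Then $\ker\rho=(Q^{-1}u_0)^{\perp}=w_0^{\perp}=K_\varphi$, so $Z_+=K_\varphi$ and $\dim Z_+\ge 2$. Combining with $\dim Z_-=\dim Z_+$ gives $\dim Z(\frg)=2\dim Z_+\ge 4\ge 3$. Apart from this Lorentzian coincidence of null lines, every step is routine bookkeeping with the bicross data.
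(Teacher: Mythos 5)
Your proof is correct, and it takes a genuinely different route from the paper's. The paper argues by contradiction directly on the symplectic form: it invokes the structural results of \cite{complex_product_dim6_08} (Theorem~4.1 and Lemma~4.2 there) to produce a basis $\{e_i,\,f_i=Je_i\}$ with $\escal{e_3,f_3}\subseteq\mathfrak{z}$, assumes $\dim\mathfrak{z}=2$ (so that $[\frg,\frg]=\escal{e_3,f_3}$ by comparison with Proposition~\ref{prop: 6d pK and biL and CP}), and then plays $\d\omega=0$, $N_J=0$, $J^*\omega=\omega$ and the non-degeneracy of $\omega$ against each other until extra central vectors appear, a contradiction. You instead stay entirely inside the paper's own bicross-product framework (Proposition~\ref{prop:every_Born_is_bicross} plus Theorem~\ref{thm:bicross_Born}): with $\nabla^{\pm}=0$ the integrability conditions make every $\varphi(X_-)$ and $\rho(X_+)$ symmetric; your decomposition $Z(\frg)=(K_\varphi\cap\ker\rho)\oplus(K_\rho\cap\ker\varphi)$ together with two-step nilpotency forces these operators to be square-zero, hence of rank at most one with null image; and the Lorentzian fact that two orthogonal null vectors are proportional identifies $\ker\rho$ with $K_\varphi$. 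This never touches $\omega$, needs no input from the complex-product classification, and in fact proves something strictly stronger: since $J$ interchanges $Z_+$ and $Z_-$, the center is even-dimensional, so $\dim\mathfrak{z}\geq4$, whereas the paper only concludes $\dim\mathfrak{z}\geq3$. Two small points to patch, neither affecting correctness: (i) the reduction ``it remains to show $Z_+=K_\varphi$'' fails in the degenerate case $\varphi=0$, $\rho\neq0$, where $K_\varphi=\frg_+\not\subseteq\ker\rho$; but there $Z_+=\ker\rho=(Q^{-1}u_0)^{\perp}$ is already two-dimensional, so the conclusion is immediate; (ii) the rank-one formula for $\rho(X_+)$ uses the symmetry condition~(3) of Theorem~\ref{thm:bicross_Born} together with condition~(4), not condition~(4) alone, and the total isotropy of $V_+$ is cleanest seen from $V_+\subseteq K_\varphi=V_+^{\perp}$ rather than from each individual square-zero operator.
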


\begin{proof}
    Let $(g,h,\omega)$ be an integrable Born structure on a six-dimensional Lie algebra $\frg$ with recursion operators $A$, $B$ and $J$ as in Definition~\ref{def: Born Lie algebras}. Suppose furthermore that the eigenspaces $\frg_{\pm}$ of $A$ are both abelian subalgebras of $\frg$. By \cite[Theorem~4.1 and Lemma~4.2]{complex_product_dim6_08} we know that there is a basis $\{f_1,f_2,f_3\}$ of $\frg_+$ and a basis $\{e_1,e_2,e_3\}$ of $\frg_-$ such that $Je_i=f_i$ and $\escal{e_3,f_3}\subset\mathfrak{z}$, where $\mathfrak{z}$ denotes the center of $\mathfrak{g}$. We may furthermore assume that $\omega=\sum\omega_{ij}\,e^i\wedge f^j$ and the condition $J^*\omega=\omega$ implies that $\omega_{ij}=\omega_{ji}$.\medskip
    
    Suppose that $\dim\mathfrak{z}=2$, i.e.~$\mathfrak{z}=\escal{e_3,f_3}$. Then, comparing with the list in Proposition~\ref{prop: 6d pK and biL and CP}, we also have $[\frg,\frg]=\escal{e_3,f_3}$. Since $\omega$ is closed and $e_3, f_3\in\mathfrak{z}$ we find
    \begin{align*}
        0=\d\omega(X,Y,e_3)=\omega([X,Y],e_3)\quad \forall X,Y\in\mathfrak{g}.
    \end{align*}
    
    Similarly, it follows that $\omega([X,Y],f_3)=0$ for every $X,Y\in\frg$. Since $[\frg,\frg]=\escal{e_3,f_3}$, this implies that $\omega_{33}=0$. Furthermore, using that $\frg_{\pm}$ are both abelian, we have
    \begin{align*}
        0=&\d\omega(e_1,e_2,f_1)=\omega([f_1,e_1],e_2)+\omega([e_2,f_1],e_1),\\
        0=&\d\omega(e_1,e_2,f_2)=\omega([f_2,e_1],e_2)+\omega([f_2,e_1],f_1),\\
        0=&\d\omega(e_1,f_1,f_2)=\omega([e_1,f_1],f_2)+\omega([f_2,e_1],f_1),\\
        0=&\d\omega(e_2,f_1,f_2)=\omega([e_2,f_1],f_2)+\omega([f_2,e_2],f_1).
    \end{align*}

    By the integrability of $J$ we know that
    \begin{align*}
        0=N_J(e_1,e_2)=-J([f_1,e_2]+[e_1,f_2])
    \end{align*}
    and, hence, $[e_1,f_2]=[e_2,f_1]$. We write
    \begin{align*}
        [e_1,f_1]=A_{11}\,e_3+B_{11}\,f_3,\quad [e_1,f_2]=[e_2,f_1]=A_{12}\,e_3+B_{12}\,f_3,\quad [e_2,f_2]=A_{22}\,e_3+B_{22}\,f_3
    \end{align*}
    and find
    \begin{align*}
        B_{11}\omega_{23}-B_{12}\omega_{13}=&0,\\
        B_{12}\omega_{23}-B_{22}\omega_{13}=&0,\\
        A_{11}\omega_{23}-A_{12}\omega_{13}=&0,\\
        A_{12}\omega_{23}-A_{22}\omega_{13}=&0.
    \end{align*}
    Since $\omega$ is non-degenerate and $\omega_{33}=0$, we must have $\omega_{13}\neq0$ or $\omega_{23}\neq 0$. If $\omega_{13}\neq 0$ and $\omega_{23}=0$, it follows that
    \begin{align*}
        B_{12}=0=B_{22}=A_{12}=A_{22}
    \end{align*}
    and therefore
    \begin{align*}
        [e_1,f_2]=0=[e_2,f_2]=[f_1,e_2],
    \end{align*}
    i.e.~$\escal{e_2,f_2}\subset\mathfrak{z}$, a contradiction. Similarly, if $\omega_{13}=0$ and $\omega_{23}\neq0$, it follows that $\escal{e_1,f_1}\subset\mathfrak{z}$. Now suppose that booth $\omega_{13}$ and $\omega_{23}$ are non-zero. Then we may assume that $\omega_{13}=1=\omega_{23}$. This yields
    \begin{align*}
        A_{11}=&A_{12}=A_{22},\\
        B_{11}=&B_{12}=B_{22},
    \end{align*}
    which implies $\escal{e_1-e_2,f_1-f_2}\subset\mathfrak{z}$, which again contradicts $\mathfrak{z}=\escal{e_3,f_3}$.
\end{proof}

\begin{proposition}\label{prop:Born_R3+R3}
    Let $\frg$ be a non-abelian six-dimensional nilpotent Lie algebra. Then $\frg$ admits an integrable Born structure with both subalgebras $\frg_{\pm}$ abelian if and only if either $$\frg\cong\mathfrak{h}_8=(0,0,0,0,0,12)\quad\text{or}\quad\frg\cong\mathfrak{h}_9=(0,0,0,0,12,14+25).$$
\end{proposition}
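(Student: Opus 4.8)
The strategy is to exploit the bicross product framework developed in Section~\ref{sec:bicross_products} together with the necessary condition from Lemma~\ref{lemma:center>=3}. By Proposition~\ref{prop:every_Born_is_bicross}, any integrable Born structure with both $\frg_{\pm}$ abelian realizes $\frg$ as a bicross product $\R^3\bowtie_\varphi^\rho\R^3$, so I would parametrize $\frg$ by the pair of representations $\varphi\colon\frg_-\longrightarrow\End(\frg_+)$ and $\rho\colon\frg_+\longrightarrow\End(\frg_-)$. Since both factors are abelian, the compatibility equations~\eqref{eq:bicross_condition} simplify considerably, and the integrability conditions of Theorem~\ref{thm:bicross_Born} become purely algebraic constraints on $\varphi$ and $\rho$ relative to the isometry $Q$. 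As in the proofs of Lemma~\ref{lemma:heis_only_flat} and Proposition~\ref{prop:Born_R3+heis3}, I would choose the basis $\{f_i:=Q^{-1}e_i\}$ of $\frg_+$ so that $J$ takes the standard block form~\eqref{eq:J_with_Q=identity} and $Q=\id$ identifies the two metrics.

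\textbf{Key steps.} First I would observe that for $\R^3\bowtie\R^3$ the resulting Lie algebra is automatically two-step nilpotent, so Lemma~\ref{lemma:center>=3} applies and forces $\dim\mathfrak{z}\geq 3$. Among the candidates in Proposition~\ref{prop: 6d pK and biL and CP}, only those whose center is at least three-dimensional survive; this immediately eliminates most of the list and leaves $\mathfrak{h}_2$, $\mathfrak{h}_6$, $\mathfrak{h}_8$ and $\mathfrak{h}_9$ as the only possibilities (the ones with a sufficiently large center). Next, for the metric I would use the remark preceding this subsection: since positive-definite Kähler nilpotent Lie algebras are abelian, $h_\pm$ must be indefinite, so $h_-$ is a Lorentzian metric on $\R^3$, which is necessarily flat, so $\nabla^-\equiv 0$. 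By Theorem~\ref{thm:bicross_Born}, condition~(1) then forces $\varphi(X_-)^a=0$ and condition~(3) forces $\rho(X_+)^a=0$, i.e.\ both $\varphi$ and $\rho$ take values in $h$-symmetric endomorphisms, while conditions~(2) and~(4) are the total-symmetry constraints $\varphi(QX_+)^sY_+=\varphi(QY_+)^sX_+$ and $\rho(X_+)^sQY_+=\rho(Y_+)^sQX_+$. I would then solve these symmetry equations together with the bicross condition~\eqref{eq:bicross_condition} explicitly—exactly the computation carried out in Proposition~\ref{prop:Born_R3+heis3}—to obtain the general form of admissible $(\varphi,\rho)$.

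\textbf{Identifying the isomorphism classes.} Having the parametrized family of admissible $(\varphi,\rho)$, the final step is to read off the isomorphism type of $\frg_\varphi^\rho$ for each choice of parameters and match against $\mathfrak{h}_2$, $\mathfrak{h}_6$, $\mathfrak{h}_8$, $\mathfrak{h}_9$. Here I would compute the derived algebra $[\frg,\frg]$ and the numerical invariants (dimension of the center, of $[\frg,\frg]$, etc.) from Table~\ref{tab:numerical_invariants}, and verify that only $\mathfrak{h}_8$ and $\mathfrak{h}_9$ actually arise, ruling out $\mathfrak{h}_2$ and $\mathfrak{h}_6$ because the symmetry constraints on $(\varphi,\rho)$ prevent their bracket structure from being realized compatibly with the Born data. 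Conversely, I would exhibit explicit representations yielding $\mathfrak{h}_8$ and $\mathfrak{h}_9$ to establish the ``if'' direction.

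\textbf{Main obstacle.} The hard part will be extracting the precise general solution of the coupled system consisting of the bicross compatibility~\eqref{eq:bicross_condition} and the four integrability conditions of Theorem~\ref{thm:bicross_Born} for arbitrary symmetric $\varphi,\rho$, and then showing that $\mathfrak{h}_2$ and $\mathfrak{h}_6$—which do have three-dimensional center and hence survive Lemma~\ref{lemma:center>=3}—are nonetheless excluded. The symmetry conditions are quite rigid, so I expect that the obstruction to realizing $\mathfrak{h}_2$ and $\mathfrak{h}_6$ comes from the interplay between the flatness of the Lorentzian metric (which kills the antisymmetric parts of $\varphi,\rho$) and the total symmetry of their symmetric parts, forcing the bracket to be too degenerate; pinning down this incompatibility cleanly is the crux of the argument.
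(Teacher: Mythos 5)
Your overall framework (bicross products, flatness of $h_\pm$ on the abelian factors, symmetry of $\varphi,\rho$ forced by conditions (1) and (3) of Theorem~\ref{thm:bicross_Born}) is sound, but the candidate-selection step contains a fatal error. You claim that $\R^3\bowtie_\varphi^\rho\R^3$ is automatically two-step nilpotent, so that Lemma~\ref{lemma:center>=3} applies across the board and forces $\dim\mathfrak{z}(\frg)\geq 3$. This is false: a bicross (indeed even a semidirect) product of two abelian Lie algebras need not be two-step, because the commuting operators $\varphi(X_-)$ can have nonzero products. Concretely, $\mathfrak{h}_9=(0,0,0,0,12,14+25)$ --- one of the two algebras in the conclusion --- is \emph{three}-step nilpotent with \emph{two}-dimensional center, and it is realized as $\R^3\rtimes_\varphi\R^3$ with $\rho\equiv 0$ and a symmetric $\varphi$ whose square does not vanish. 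Applied consistently, your filter ``center at least three-dimensional'' would eliminate $\mathfrak{h}_9$ and thus contradict the statement you are proving. Your candidate list is also internally inconsistent with Table~\ref{tab:numerical_invariants}: $\mathfrak{h}_2$ and $\mathfrak{h}_9$ have two-dimensional centers, while $\mathfrak{h}_7$, whose center is three-dimensional, is missing from it.

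The paper's proof uses a different and correct filter: by the classification in \cite[Section~5.3]{complex_product_dim6_08}, the only six-dimensional nilpotent Lie algebras carrying a complex product structure with \emph{both} eigenspace subalgebras abelian are $\mathfrak{h}_2,\mathfrak{h}_4,\mathfrak{h}_5,\mathfrak{h}_8,\mathfrak{h}_9$ and $(0,0,0,0,0,12+34)$; the last is discarded because it admits no pseudo-K\"ahler structure, and $\mathfrak{h}_2,\mathfrak{h}_4,\mathfrak{h}_5$ are discarded by Lemma~\ref{lemma:center>=3}, which legitimately applies to them because they genuinely \emph{are} two-step nilpotent with two-dimensional center. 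In particular $\mathfrak{h}_6$ and $\mathfrak{h}_7$ never enter the discussion (their complex product structures do not have both subalgebras abelian), so the ``hard incompatibility'' you anticipate having to prove for $\mathfrak{h}_6$ is not needed. This leaves exactly $\mathfrak{h}_8$ and $\mathfrak{h}_9$, for which explicit data with $\rho\equiv 0$ satisfying Theorem~\ref{thm:bicross_Born} are exhibited, as in your ``if'' direction. If you wanted to avoid citing the complex product classification, you would have to solve the full coupled system~\eqref{eq:bicross_condition} together with the four conditions of Theorem~\ref{thm:bicross_Born} for arbitrary symmetric $\varphi,\rho$ and identify every isomorphism class that arises --- a substantially larger computation than your sketch acknowledges, and one that Lemma~\ref{lemma:center>=3} alone cannot shortcut, precisely because the three-step case escapes it.
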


\begin{proof}
    By \cite[Section~5.3]{complex_product_dim6_08}, the only candidates to admit an integrable Born structure with both $\frg_{\pm}$ abelian are $\mathfrak{h}_2,\mathfrak{h}_4,\mathfrak{h}_5,\mathfrak{h}_8,\mathfrak{h}_9$ (the Lie algebra $(0,0,0,0,0,12+34)$ does not admit pseudo-Kähler structures). The Lie algebras $\mathfrak{h}_2,\mathfrak{h}_4,\mathfrak{h}_5$ are two-step nilpotent and have two-dimensional center, hence they do not admit an integrable Born structure by Lemma~\ref{lemma:center>=3}. Now let $h_-=-e^1\otimes e^1+e^2\otimes e^2+e^3\otimes e^3$ be the (unique up to isometry) Lorentzian metric on $\frg_-=\escal{e_1,e_2,e_3}\cong\R^3$. Let $Q\colon\frg_+\longrightarrow\frg_-$ be a vector space isomorphism and set $h_+:=Q^*h_-$. On $\frg_+\cong\R^3$ we choose a basis $\{f_1,f_2,f_3\}$ by setting $f_i:=Q^{-1}e_i$. With respect to these choices of bases, the complex structure $J$ on $\frg_\varphi^\rho$ takes the form \eqref{eq:J_with_Q=identity}. If we consider $\rho\equiv0$ and the representation $\varphi\colon\frg_-\longrightarrow\End(\frg_+)$ given by $$\varphi(e_1)=\varphi(e_2)=\begin{pmatrix}
        1&1&0\\
        -1&-1&0\\
        0&0&0
    \end{pmatrix}\quad\text{and}\quad\varphi(e_3)=0,$$ all the conditions of Theorem~\ref{thm:bicross_Born} are fulfilled and hence we have an integrable Born structure on $\frg_\varphi^\rho$. One can check that in this case $\frg_\varphi^\rho\cong\mathfrak{h}_8=(0,0,0,0,0,12)$ by comparing numerical invariants using Table~\ref{tab:numerical_invariants}. If we consider $\rho\equiv0$ and the representation $\varphi\colon\frg_-\longrightarrow\End(\frg_+)$ defined by $$\varphi(e_1)=\begin{pmatrix}
        0&0&1\\
        0&0&1\\
        -1&1&0
    \end{pmatrix},\quad\varphi(e_2)=\begin{pmatrix}
        0&0&-1\\
        0&0&-1\\
        1&-1&0
    \end{pmatrix},\quad\varphi(e_3)=\begin{pmatrix}
        1&-1&0\\
        1&-1&0\\
        0&0&0
    \end{pmatrix},$$ then $\frg_\varphi^\rho\cong\mathfrak{h}_9=(0,0,0,0,12,14+25)$.
\end{proof}

To obtain Theorem~\ref{thm: 6d class nilpotent} we just combine Proposition~\ref{prop:Born_R3+heis3} and Proposition~\ref{prop:Born_R3+R3}.

\subsection{Curvature properties}

We now study the curvature properties of the pseudo-Riemannian metrics $h$ and $g$ for the integrable Born structures listed in Table~\ref{tab: 4d born}. We check whether $h$ and $g$ are flat, Einstein or a \emph{Ricci soliton}. Recall that a pseudo-Riemannian Lie algebra $(\frg,h)$ is an \emph{(algebraic) Ricci soliton} if there exist $\lambda\in\R$ and a derivation $D\in\Der(\frg)$ such that $\ric(h)=\lambda\Id+D$. The curvature properties of the pseudo-Kähler metric $h$ were also studied in \cite[Section 4]{Pseudo-Kahler_4dim_06}. For the Lie algebra $\mathfrak{rr}_{3,0}$ with the metric $h$ from Table~\ref{tab: 4d born} we have that $$\ric(h)=\left(\begin{smallmatrix}
    -1 & 0 & 0 & 0 \\
    0 & -1 & 0 & 0 \\
    0 & 0 & 0 & 0 \\
    0 & 0 & 0 & 0
\end{smallmatrix}\right),\quad\lambda=-1,\quad D=\left(\begin{smallmatrix}
    0 & 0 & 0 & 0 \\
    0 & 0 & 0 & 0 \\
    0 & 0 & 1 & 0 \\
    0 & 0 & 0 & 1
\end{smallmatrix}\right).$$

For the Lie algebra $\mathfrak{d}_{4,2}$ with the metric $h$ from Table~\ref{tab: 4d born} we have that $$\ric(h)=\left(\begin{smallmatrix}
    -3 & 0 & 0 & 0 \\
    0 & 0 & 0 & 0 \\
    0 & 0 & 0 & 0 \\
    0 & 0 & 0 & -3
\end{smallmatrix}\right),\quad\lambda=-3,\quad D=\left(\begin{smallmatrix}
    0 & 0 & 0 & 0 \\
    0 & 3 & 0 & 0 \\
    0 & 0 & 3 & 0 \\
    0 & 0 & 0 & 0
\end{smallmatrix}\right).$$
A summary of the curvature properties of $h$ and $g$ for the integrable Born structures listed in Table~\ref{tab: 4d born} is given in Table~\ref{tab: curvature props 4d}.
\begin{table}[H]
\begin{center}
\begin{tabular}{|c|c|c|}
    \hline
    $\frg$&$h$&$g$\\
    \hline
    \hline
    $\mathfrak{rh}_3$&Flat&Flat\\
    \hline
    $\mathfrak{rr}_{3,0}$&Ricci soliton&Flat\\
    \hline
    $\mathfrak{r}_2\mathfrak{r}_2$&Einstein&Flat\\
    \hline
    $\mathfrak{r}'_2$&Einstein&Flat\\
    \hline
    $\mathfrak{r}_{4,-1,-1}$&Flat&Flat\\
    \hline
    $\mathfrak{d}_{4,1}$&Non-flat&Flat\\
    \hline
    $\mathfrak{d}_{4,2}$&Ricci soliton&Flat\\
    \hline
    $\mathfrak{d}_{4,1/2}$&Einstein&Flat\\
    \hline
\end{tabular}
\end{center}
\caption{Curvature properties of the four-dimensional Born Lie algebras listed in Table~\ref{tab: 4d born}. If the metric is neither flat, Einstein or Ricci soliton we just write non-flat.}
\label{tab: curvature props 4d}
\end{table}

\begin{remark}
    If we consider the Lie algebra $\mathfrak{r}_{4,-1,-1}$ equipped with the integrable Born structure $(h,J,\frg_\pm)$ given by $$h=-e^1\otimes e^1-e^1\odot e^3-e^2\odot e^4-e^4\otimes e^4,\quad Je_1=-e_4,Je_2=e_3,\quad \frg_+=\escal{e_1,e_3},\quad\frg_-=\escal{e_2,e_4},$$ then the pseudo-Kähler metric $h$ is Ricci-flat but non-flat. However, the metric $g$ is still flat.
\end{remark}

Finally, we study some curvature properties of the metrics $h$ and $g$ of the six-dimensional nilpotent Born Lie algebras. We first note the following consequence of Theorem~\ref{thm: 6d class nilpotent}.

\begin{corollary}
    Let $\frg$ be a six-dimensional nilpotent Born Lie algebra. Then for every integrable Born structure $(h,J,\frg_{\pm})$ on $\frg$ the pseudo-Riemannian metric $h$ is flat when restricted to the subalgebras $\frg_{\pm}$.
\end{corollary}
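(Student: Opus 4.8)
The plan is to reduce the claim to a case analysis on the isomorphism type of the subalgebras $\frg_{\pm}$, exploiting the bicross product description of integrable Born structures together with the flatness criterion for $\mathfrak{heis}_3$. By the Lie algebra analogue of Corollary~\ref{cor: L pm Lagrangian}, each $\frg_{\pm}$ is a three-dimensional subalgebra of the nilpotent Lie algebra $\frg$ on which $h$ restricts to a non-degenerate metric $h_{\pm}:=h|_{\frg_{\pm}}$; being three-dimensional and nilpotent, each $\frg_{\pm}$ is isomorphic to either $\R^3$ or $\mathfrak{heis}_3$. First I would record that $Q:=J|_{\frg_+}$ is a linear isometry from $(\frg_+,h_+)$ to $(\frg_-,h_-)$, so $h_+$ and $h_-$ share a signature, and that this common signature must be indefinite: otherwise $h=h_+\oplus h_-$ would be definite, making $(h,J)$ (after a global sign if necessary) a positive-definite Kähler structure on a non-abelian nilpotent Lie algebra, contradicting \cite{Hasegawa89}. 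Since in dimension three an indefinite metric is Lorentzian, each $h_{\pm}$ is then Lorentzian.

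Next I would dispose of the abelian case: if $\frg_{\pm}\cong\R^3$ then all brackets in $\frg_{\pm}$ vanish, so the Koszul formula forces the Levi-Civita connection $\nabla^{\pm}$ to be identically zero and $h_{\pm}$ is flat. It remains to handle a factor isomorphic to $\mathfrak{heis}_3$. Here I would invoke Proposition~\ref{prop:every_Born_is_bicross} to write the given structure as the Born structure associated with data $(\frg_{\pm},h_{\pm},Q,\varphi,\rho)$ for representations $\varphi$ and $\rho$ satisfying the conditions of Theorem~\ref{thm:bicross_Born}. If $\frg_-\cong\mathfrak{heis}_3$, the argument of Lemma~\ref{lemma:heis_only_flat} applies: condition~1 of Theorem~\ref{thm:bicross_Born} forces $\varphi(X_-)^a=Q^{-1}\nabla^-_{X_-}Q$, and together with condition~2 this determines the general form of $\varphi$ in terms of the Levi-Civita connection of $h_-$; for the two non-flat Lorentzian metrics on $\mathfrak{heis}_3$ no such $\varphi$ can satisfy the representation relations $[\varphi(e_i),\varphi(e_j)]=\varphi([e_i,e_j])$, so $h_-$ must be isometric to the flat metric and hence is flat. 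I would stress that this portion of the argument is insensitive to the Lie algebra structure of $\frg_+$, since the representation relations only involve the bracket of $\frg_-$ and the matrices $\varphi(e_i)\in\End(\frg_+)$; in particular it also settles the case $\frg_+\cong\frg_-\cong\mathfrak{heis}_3$.

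For a factor $\frg_+\cong\mathfrak{heis}_3$ I would run the identical computation with the roles of $+$ and $-$ interchanged, now using conditions~3 and~4 of Theorem~\ref{thm:bicross_Born}: condition~3 forces $\rho(X_+)^a=Q\nabla^+_{X_+}Q^{-1}$, and the obstruction to $\rho$ being a representation of $\frg_+\cong\mathfrak{heis}_3$ is structurally identical to the one encountered for $\varphi$, whence $h_+$ is again flat. Combining the abelian and Heisenberg cases yields flatness of $h|_{\frg_{\pm}}$ for every integrable Born structure on $\frg$. The main obstacle I anticipate is the bookkeeping needed to certify that the obstruction of Lemma~\ref{lemma:heis_only_flat} is genuinely symmetric in $\frg_+$ and $\frg_-$ --- in particular that its conclusion nowhere used $\frg_+\cong\R^3$ beyond supplying a codomain for $\varphi$ --- together with confirming that $h_{\pm}$ is forced to be Lorentzian rather than definite, which is exactly what licenses bringing the classification of Lorentzian metrics on $\mathfrak{heis}_3$ to bear.
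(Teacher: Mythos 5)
Your proof is correct, and it takes a more self-contained route than the paper does. The paper disposes of this corollary in one sentence, by appealing to the proof of Theorem~\ref{thm: 6d class nilpotent}: abelian factors are flat for free, Lemma~\ref{lemma:heis_only_flat} forces the flat Lorentzian metric on a Heisenberg factor paired with an abelian one, and hence all structures produced in Propositions~\ref{prop:Born_R3+heis3} and~\ref{prop:Born_R3+R3} come from flat factors. You instead argue directly on an arbitrary integrable Born structure $(h,J,\frg_\pm)$, feeding it into Proposition~\ref{prop:every_Born_is_bicross} and Theorem~\ref{thm:bicross_Born} and then reusing the computation of Lemma~\ref{lemma:heis_only_flat}. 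The substantive difference appears when $\frg_+\cong\frg_-\cong\mathfrak{heis}_3$: the paper's classification handles that case via Lemma~\ref{lemma:heis3-to-R3}, which replaces $\frg_\pm$ by \emph{different} subalgebras $\tilde{\frg}_\pm$, so the one-line proof literally yields flatness of $h|_{\tilde{\frg}_\pm}$ rather than of $h$ restricted to the originally given pair. Your two observations---that the proof of Lemma~\ref{lemma:heis_only_flat} uses only conditions~1 and~2 of Theorem~\ref{thm:bicross_Born} together with $\varphi$ being a representation of $\frg_-$, hence never the bracket of $\frg_+$, and that conditions~3 and~4 are exactly conditions~1 and~2 for the swapped data $(\frg_\mp,h_\mp,-Q^{-1},\rho,\varphi)$---are both correct and close precisely this case, so your argument establishes the statement for genuinely every integrable Born structure, which is what the corollary asserts. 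One minor point of care: the Hasegawa signature argument should be invoked only when a Heisenberg factor is present (then $\frg$ is automatically non-abelian); for abelian $\frg$ the restrictions may well be definite, but flatness is then trivial, so your case order should make this explicit. Overall, the paper's proof is shorter given the classification machinery already in place, while yours is sharper on the both-Heisenberg configuration and covers all cases uniformly.
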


\begin{proof}
    We have seen that every six-dimensional nilpotent Born Lie algebra $\frg$ in Theorem~\ref{thm: 6d class nilpotent} is obtained from flat pseudo-Riemannian Lie algebras $\frg_\pm$.
\end{proof}

Every pseudo-Kähler metric on a nilpotent Lie algebra is Ricci-flat by \cite[Lemma 6.4]{FinoPartonSalamon}. Hence we will only consider whether the metrics are flat or not. We summarize this in Table~\ref{tab: curvature props nilpotent 6d} (for the Lie algebras $\mathfrak{h}_8$ and $\mathfrak{h}_9$ compare with \cite[Corollary 4.2]{PK_6dim_nil_04}).

\begin{table}[H]
\begin{center}
\begin{tabular}{|c|c|c|}
    \hline
    $\frg$&$h$&$g$\\
    \hline
    \hline
    $\mathfrak{h}_{4}$&Non-flat&Flat\\
    \hline
    $\mathfrak{h}_{7}$&Non-flat&Flat\\
    \hline
    $\mathfrak{h}_{8}$&Flat&Flat\\
    \hline
    $\mathfrak{h}_{9}$&Non-flat&Flat\\
    \hline
    $\mathfrak{h}_{10}$&Flat&Flat\\
    \hline
    $\mathfrak{h}_{11}$&Non-flat&Flat\\
    &(for $x\neq-3$)&\\
    \hline
    $\mathfrak{h}_{13}$&Non-flat&Non-flat\\
    &(for $y^2+x^2+3x\neq0$)&(for $y(2x+3)\neq0$)\\
    \hline
\end{tabular}
\end{center}
\caption{Curvature properties of the six-dimensional nilpotent Born Lie algebras of Theorem~\ref{thm: 6d class nilpotent}.}
\label{tab: curvature props nilpotent 6d}
\end{table}

We note that, in contrast with the four-dimensional case (see Table~\ref{tab: curvature props 4d}), we have an example of a Born Lie algebra, namely $\mathfrak{h}_{13}$, for which the neutral metric $g$ is non-flat.


\appendix

\section{Some numerical invariants}

As we have already explained, a Born Lie algebra is in particular a pseudo-Kähler Lie algebra. Six-dimensional nilpotent pseudo-Kähler Lie algebras were classified in \cite[Theorem 3.1]{PK_6dim_nil_04}. To identify the Lie algebras obtained in Theorem~\ref{thm: 6d class nilpotent} we make use of the following table of numerical invariants. The second column corresponds to the nilpotency step of the nilpotent Lie algebra $\frg$. The third, fourth and fifth columns correspond to the first, second and third Betti number, respectively. The sixth column corresponds to the dimension of the automorphism group. The seventh column corresponds to the dimension of the center of the Lie algebra. Finally, the last column corresponds to the number of linearly independent decomposable exact two-forms. We notice that with this set of numbers we can completely identify these Lie algebras.

\begin{table}[H]
\begin{center}
\begin{tabular}{|l|c|c|c|c|c|c|c|}
     \hline
     $\frg$&step&$b_1(\frg)$&$b_2(\frg)$&$b_3(\frg)$&$\dim\mathrm{Aut}(\frg)$&$\dim\mathfrak{z}(\frg)$&$\nu^2(\frg)$\\
     \hline\hline
     $\mathfrak{h}_{8}=(0,0,0,0,0,12)$&2&5&11&14&24&4&1\\\cline{3-8}
     $\mathfrak{h}_{2}=(0,0,0,0,12,34)$&2&4&8&10&16&2&2\\
     $\mathfrak{h}_{4}=(0,0,0,0,12,14+23)$&2&4&8&10&17&2&1\\
     $\mathfrak{h}_{5}=(0,0,0,0,13+42,14+23)$&2&4&8&10&16&2&0\\
     $\mathfrak{h}_{6}=(0,0,0,0,12,13)$&2&4&9&12&19&3&2\\\cline{3-8}
     $\mathfrak{h}_{7}=(0,0,0,12,13,23)$&2&3&8&12&18&3&3\\
     \hline
     $\mathfrak{h}_{9}=(0,0,0,0,12,14+25)$&3&4&7&8&15&2&1\\\cline{3-8}
     $\mathfrak{h}_{10}=(0,0,0,12,13,14)$&3&3&6&8&15&2&3\\
     $\mathfrak{h}_{11}=(0,0,0,12,13,14+23)$&3&3&6&8&14&2&2\\
     $\mathfrak{h}_{12}=(0,0,0,12,13,24)$&3&3&6&8&13&2&3\\
     $\mathfrak{h}_{13}=(0,0,0,12,13+14,24)$&3&3&5&6&12&2&3\\
     $\mathfrak{h}_{14}=(0,0,0,12,14,13+42)$&3&3&5&6&13&2&2\\
     $\mathfrak{h}_{15}=(0,0,0,12,13+42,14+23)$&3&3&5&6&12&2&1\\
     \hline
\end{tabular}
\caption{Some numerical invariants of six-dimensional nilpotent pseudo-Kähler Lie algebras.}
\label{tab:numerical_invariants}
\end{center}
\end{table}


\bibliographystyle{amsplain}
\bibliography{biblio}

\end{document}